\newcommand\topstrut[1][1.2ex]{\setlength\bigstrutjot{#1}{\bigstrut[t]}}
\newcommand\botstrut[1][0.9ex]{\setlength\bigstrutjot{#1}{\bigstrut[b]}}
\newtheorem{theorem}{Theorem}[section]
\newtheorem{proposition}[theorem]{Proposition}
\newtheorem{lemma}[theorem]{Lemma}
\newtheorem{corollary}[theorem]{Corollary}
\theoremstyle{definition}
\newtheorem{definition}[theorem]{Definition}
\newtheorem{example}[theorem]{Example}
\theoremstyle{remark}
\newtheorem{observation}[theorem]{Observation}
\newcommand{\R}{\mathbb{R}}
\newcommand{\skw}[1]{\text{\rm Skew$_{#1}$}}
\newcommand{\sym}[1]{\text{\rm Sym$_{#1}$}}
\newcommand{\M}{\mathcal{M}}
\newcommand{\Orth}{\mathcal{O}}
\newcommand{\Q}{\mathcal{Q}}
\newcommand{\T}{\mathcal{T}}
\newcommand{\norm}{\mathcal{N}}
\newcommand{\tr}{{\rm tr}}
\newcommand{\ts}{{\rm TS}}
\newcommand{\ns}{{\rm NS}}
\newcommand{\spn}[1]{{\rm Span}\{#1\}}
\newcommand{\ve}{{\rm Vec}}
\newcommand{\sgn}{{\rm sgn}}
\newcommand{\ba}{\mathbf{a}}
\newcommand{\bb}{\mathbf{b}}
\newcommand{\bc}{\mathbf{c}}
\newcommand{\bu}{\mathbf{u}}
\newcommand{\bv}{\mathbf{v}}
\newcommand{\bx}{\mathbf{x}}
\newcommand{\by}{\mathbf{y}}
\newcommand{\br}{\mathbf{r}}
\newcommand{\bk}{\mathbf{k}}
\newcommand{\bzero}{\mathbf{0}}
\newcommand\defi[1]{{\sl #1}}
\newcommand{\ip}[1]{\langle #1 \rangle}
\begin{document}
\begin{frontmatter}

\title{Sign Patterns of Orthogonal Matrices and the Strong Inner Product Property}

\author[1]{Bryan A. Curtis}
\ead{bcurtis6@uwyo.edu}

\author[1]{Bryan L. Shader}
\ead{bshader@uwyo.edu}

\address[1]{Department of Mathematics, University of Wyoming, Laramie, WY 82071, USA}

\begin{abstract}
A new condition, the strong inner product property, is introduced and used to construct sign patterns of row orthogonal matrices. Using this property, infinite families of sign patterns allowing row orthogonality are found. These provide insight into the underlying combinatorial structure of row orthogonal matrices.  Algorithmic techniques for verifying that a matrix has the strong inner product property are also presented. These techniques lead to a generalization of the strong inner product property and can be easily implemented using various software.
\end{abstract}

\begin{keyword}
Strong inner product property \sep sign pattern \sep orthogonality \sep orthogonal matrix \sep row orthogonal matrix
\MSC[2010] 15B10 \sep \MSC[2010] 15B35
\end{keyword}
\end{frontmatter}

%
%
\section{Introduction}
Characterizing the sign patterns of orthogonal matrices has been of interest since the early 1960's. This endeavor was first proposed by M. Fiedler, in June 1963, at the Symposium on the Theory of Graphs and Its Applications \cite{Fiedler1964}. More recently there has been renewed interest in sign patterns of orthogonal matrices \cite{Beasley1993}, \cite{Beasley2006}, and in related qualitative and combinatorial problems \cite{BARRETT2015}, \cite{BARRETT2017}, \cite{CHEON2003}. There has been some success in characterizing the sign patterns of orthogonal matrices for small orders or with additional combinatorial constraints \cite{Gao2006}, \cite{JOHNSON1998}, \cite{WATERS1996}. As of the publication of this paper there is no characterization for orders $n\geq 6$.

For many years the only known necessary condition for a sign pattern to allow orthogonality was \defi{potential orthogonality}; that is the rows (respectively columns) are nonzero and the sign patterns of each pair of rows (respectively columns) have a realization that are orthogonal. The first example of a potentially orthogonal sign pattern not allowing orthogonality was given in 1996 \cite{WATERS1996}. Shortly after this observation, Johnson and Waters provided the first necessary condition stronger than potential orthogonality \cite{JOHNSON1998}. It is still not known whether this necessary condition is sufficient.

Developing sufficient conditions has also had some success in the literature. A common technique is to take a known orthogonal matrix and search for ``nearby'' orthogonal matrices. Givens rotations can be used to perturb certain zero entries of orthogonal matrices without affecting the sign of the nonzero entries \cite{CHEON2003}. The implicit function theorem has also been used in conjunction with special classes of orthogonal matrices \cite{WATERS1996}.  In this paper we introduce the strong inner product property, a tool that guarantees the existence of sign patterns of orthogonal matrices by perturbing the entries of ``nicely'' behaved orthogonal matrices. The strong inner product property surpasses previous methods in its ability to construct numerous examples of sign patterns of orthogonal matrices.

The next section provides the preliminary definitions and notation necessary to discuss sign patterns of orthogonal matrices. In Section 3 we introduce the strong inner product property and develop some basic results. Section 4 provides the motivation behind, and describes how to apply, the strong inner product property. In section 5 we consider some applications of the strong inner product property. We conclude with a generalization of the strong inner product property and a useful verification technique in Section 6.

%
%
\section{Preliminaries and Notation}

All matrices in this paper are real. Throughout, we restrict $m\leq n$ to be integers. The symbols $O$ and $I$ represent the zero and identity matrices of appropriate sizes, respectively. Let $\R^{m\times n}$ denote the set of all real $m\times n$ matrices, $\skw n$ the set of all $n\times n$ skew symmetric matrices, and $\sym n$ the set of all $n\times n$ symmetric matrices. A matrix $A$ (respectively vector $\bb$) is \defi{nowhere zero} if every entry in $A$ (respectively $\bb$) is nonzero. A matrix has \defi{full rank} if its rank is the largest possible. Define $E_{ij}\in\R^{m\times n}$ to be the matrix with a 1 in position $(i,j)$ and 0 elsewhere. If there is ever ambiguity in the dimensions of $E_{ij}$ we will specify. The set of $m \times n$ \defi{row orthogonal} matrices is
\[
\Orth(m,n) = \{Q\in\R^{m\times n}: QQ^T = I\};
\]
if $m = n$ we abbreviate this to $\Orth(n)$.

The \defi{support} of a matrix $A$ (respectively vector $\bb$) is the set of indices corresponding to the nonzero entries of $A$ (respectively $\bb$). For $A\in \R^{m\times n}$, $\alpha\subseteq\{1,2,\ldots,m\}$ and $\beta\subseteq\{1,2,\ldots,n\}$ the submatrix of $A$ with rows indexed by $\alpha$ and columns indexed by $\beta$ is denoted by $A[\alpha,\beta]$; in the case that $\beta = \{1,2,\ldots,n\}$ this is shortened to $A[\alpha,:]$ and similarly $A[:,\beta]$ for $\alpha = \{1,2,\ldots, m\}$. The \defi{Hadamard (entrywise) product} of the matrices $A$ and $B$ is denoted by $A\circ B$. For a matrix $A\in\R^{m\times n}$, $\ve(A)$ denotes the column vector of dimension $mn$ obtained by stacking together the columns of $A$. For example, if
\[
A =
\left[\begin{array}{ccc}
a_{11} & a_{12} & a_{13}\\
a_{21} & a_{22} & a_{23}\\
\end{array}\right],
\mbox{ then }
\ve(A) = 
\left[\begin{array}{c}
a_{11}\\ a_{21}\\ \hline a_{12}\\ a_{22}\\ \hline a_{13}\\ a_{23}
\end{array}\right].
\]
Notice $\ve(A)$ is indexed by the pairs $(i,j)$, $1\leq i\leq m$ and $1\leq j\leq n$, in reverse lexicographic order, and $\ve:\R^{m\times n}\to \R^{mn}$ is a bijective linear map.

The \defi{sign} of a real number $a$ is
\[
\sgn(a) = 
\begin{cases}
\hfill 1 & \text{if }a>0,\\
\hfill 0 & \text{if }a=0,\\
-1 & \text{if }a<0.
\end{cases}
\]
A \defi{sign pattern} is a $(0,1,-1)$-matrix and the sign pattern of a matrix $A = [a_{ij}]$, written $\sgn(A)$, is the sign pattern whose $(i,j)$-entry is $\sgn(a_{ij})$. Given an $m\times n$ sign pattern $S$, the \defi{qualitative set} of $S$ is
\[
\mathcal{Q}(S) = \{A\in \R^{m\times n}: \sgn(A) = S\}.
\]
The $m\times n$ sign pattern $S$ \defi{allows orthogonality} if there exists a (row) orthogonal matrix $Q\in \mathcal{Q}(S)$. The \defi{super pattern} of $S = [s_{ij}]$ in the direction of the $m\times n$ sign pattern $R = [r_{ij}]$ is the matrix $S_{\vec{R}}$ having $(i,j)$-entry equal to $s_{ij}$ if $s_{ij}\not=0$ and $r_{ij}$ otherwise. For example, if 
\[
S =
\left[\begin{array}{rrrr}
0 & 1 & 1 & 1\\
1 & 0 & 1 & -1\\
1 & -1 & 0 & 1\\
1 & 1 & -1 & 0
\end{array}\right]
\qquad \text{and} \qquad
R=
\left[\begin{array}{rrrr}
1 & 0 & 1 & -1\\
1 & -1 & -1 & 0\\
-1 & -1 & 0 & 1\\
1 & -1 & 0 & 1
\end{array}\right],
\]
then
\[
S_{\vec{R}} =
\left[\begin{array}{rrrr}
1 & 1 & 1 & 1\\
1 & -1 & 1 & -1\\
1 & -1 & 0 & 1\\
1 & 1 & -1 & 1
\end{array}\right].
\]

%
%
\section{Strong Inner Product Property}
We begin with the definition of the strong inner product property and some basic results.

\begin{definition}\label{DefSIPP}
The $m\times n$ matrix $M$ with $m\leq n$ has the \defi{strong inner product property} (SIPP) provided $M$ has full rank and $X = O$ is the only symmetric matrix satisfying $(XM)\circ M = O$.
\end{definition}

At first glance the strong inner product property may seem unnatural. However, as we will see in Section~\ref{develop}, the strong inner product property manifests when properly chosen manifolds intersect transversally.

In future sections we apply the SIPP to row orthogonal matrices. As row orthogonal matrices have full rank, the condition ``$M$ has full rank'' in Definition~\ref{DefSIPP} seems unnecessary. This condition is justified when taking a closer look at the motivation behind the SIPP. In particular, the techniques in Section~\ref{develop} can be applied to the family of matrices
\[
\{A\in\R^{m\times n}: AA^T = P\},
\]
where $P$ is a fixed positive definite matrix. When $P$ is positive definite the theory in Section~\ref{develop} leads to the above definition of the SIPP. Requiring full rank ensures $AA^T$ is positive definite. We focus on the SIPP for row orthogonal matrices but give its properties in the general setting.

The terminology SIPP follows that of the strong Arnol'd property which uses similar ideas to obtain results about the maximum nullity of a certain family of symmetric matrices associated with a graph \cite{Holst2009}. There have been other generalizations of the strong Arnol'd property in different settings \cite{BARRETT2015}. 

In the case that $m=n$ and $M$ is invertible the conditions required to have the SIPP can be simplified. 

\begin{theorem}\label{equivSIPP}
Suppose $M\in \R^{n\times n}$ is invertible. Then $M$ has the SIPP if and only if $Y = O$ is the only matrix such that $YM^{-1}$ is symmetric and $Y\circ M =O$.
\end{theorem}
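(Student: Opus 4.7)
The plan is to use the obvious change of variables $Y = XM$ as a bijection between the two candidate solution sets, and then observe that the relevant conditions in the two formulations correspond under this bijection.

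First I would note that since $M$ is invertible, the full rank requirement in the definition of the SIPP is automatic, so the SIPP reduces to the single condition that $X = O$ is the only symmetric matrix with $(XM) \circ M = O$. The key observation is then that the linear map $\varphi : \R^{n \times n} \to \R^{n \times n}$ defined by $\varphi(X) = XM$ is a bijection (with inverse $Y \mapsto YM^{-1}$), and $\varphi$ sends $O$ to $O$.

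Next I would verify that $\varphi$ matches the two sets of constraints. If $X$ is symmetric, then $\varphi(X)M^{-1} = X$ is symmetric; conversely, if $YM^{-1}$ is symmetric, then $X := YM^{-1}$ is symmetric and $\varphi(X) = Y$. Likewise, $(XM) \circ M = \varphi(X) \circ M$, so $(XM) \circ M = O$ if and only if $\varphi(X) \circ M = O$. Therefore $\varphi$ restricts to a bijection
\[
\{X \in \sym{n} : (XM) \circ M = O\} \;\longleftrightarrow\; \{Y \in \R^{n \times n} : YM^{-1} \text{ is symmetric and } Y \circ M = O\},
\]
and this bijection maps $O$ to $O$.

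The conclusion then follows immediately: one set equals $\{O\}$ if and only if the other does, which is exactly the claimed equivalence. There is no real obstacle here, as everything is just a change of variables enabled by invertibility of $M$; the only thing to be careful about is confirming that the symmetry condition on $X$ in the original SIPP becomes precisely the symmetry condition on $YM^{-1}$ in the new formulation.
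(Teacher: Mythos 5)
Your proof is correct and uses essentially the same idea as the paper's: the change of variables $Y = XM$ (equivalently $X = YM^{-1}$). The paper writes out the two implications separately while you package them as a single bijection argument, but the substance is identical.
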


\begin{proof}
Suppose that $M$ has the SIPP. Let $Y$ be an $n \times n$ matrix such that $YM^{-1}$ is symmetric and $Y\circ M=O$. Let $X=YM^{-1}$ so that $XM = Y$. Then $X$ is  symmetric and $(XM) \circ M=O$. Since $M$ has the SIPP, $X=O$ and consequently, $Y = O$.
 
Conversely, suppose that $Y = O$ is the only matrix such that $YM^{-1}$ is symmetric and $Y\circ M =O$. Let $X$ be a symmetric matrix such that $(XM) \circ M=O$. Let $Z=XM$ so that $X = ZM^{-1}$ is symmetric and $Z\circ M=(XM) \circ M=O$. By our assumptions $Z=O$. Hence $XM=Z=O$, and since $M$ is invertible, $X = O$. Therefore, $M$ has the SIPP.
\end{proof}

If we further restrict $M$ to be orthogonal then we have the following useful corollary.

\begin{corollary}\label{orthogSIPP}
Suppose $Q\in\Orth(n)$. Then $Q$ has the SIPP if and only if $Y=O$ is the only matrix $Y$ such that  the dot product between row $i$ of $Y$ and row $j$ of $Q$ equals the dot product between row $j$ of $Y$ and row $i$ of $Q$ for all $i$ and $j$, and $Y\circ Q = O$.
\end{corollary}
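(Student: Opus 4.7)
The plan is to deduce this immediately from Theorem \ref{equivSIPP} by unpacking the matrix condition ``$YM^{-1}$ is symmetric'' in the special case $M = Q \in \Orth(n)$, where $Q^{-1} = Q^T$.

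First I would observe that since $Q$ is orthogonal, Theorem \ref{equivSIPP} applies and tells us that $Q$ has the SIPP if and only if $Y = O$ is the only matrix satisfying (i) $YQ^T$ is symmetric and (ii) $Y \circ Q = O$. The condition $Y \circ Q = O$ is already in the form we want, so the only work is to recast condition (i) in terms of rows of $Y$ and $Q$.

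Next I would compute the $(i,j)$ entry of $YQ^T$ directly: $(YQ^T)_{ij} = \sum_{k} Y_{ik} Q_{jk}$, which is exactly the dot product of row $i$ of $Y$ with row $j$ of $Q$. Similarly $(YQ^T)_{ji}$ is the dot product of row $j$ of $Y$ with row $i$ of $Q$. Hence $YQ^T$ being symmetric is equivalent to saying that for every pair $i,j$ the dot product of row $i$ of $Y$ with row $j$ of $Q$ equals the dot product of row $j$ of $Y$ with row $i$ of $Q$.

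Substituting this equivalent form of condition (i) into the statement of Theorem \ref{equivSIPP} yields the corollary. There is no real obstacle here: the argument is a direct translation of the symmetry of $YQ^T$ into a statement about inner products of rows, which in hindsight motivates the name ``strong inner product property.''
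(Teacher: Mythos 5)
Your proposal is correct and follows exactly the same route as the paper: invoke Theorem~\ref{equivSIPP} with $M = Q$ (so $M^{-1} = Q^T$), then observe that the $(i,j)$ entry of $YQ^T$ is the dot product of row $i$ of $Y$ with row $j$ of $Q$, so that symmetry of $YQ^T$ is the stated row-inner-product condition. The only difference is that you spell out the entry computation that the paper leaves as a one-line observation.
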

\begin{proof}
This equivalence follows from Theorem~\ref{equivSIPP} and the observation that the $(i,j)$-entry of $YQ^T$ is the dot product between the $i$-th row of $Y$ and the $j$-th row of $Q$. 
\end{proof}

When studying (row) orthogonal matrices there are two convenient types of equivalence. A \defi{signed permutation matrix} is a square sign pattern with exactly one nonzero entry in each row and column. Matrices $A,B\in\R^{m\times n}$ are \textit{sign equivalent} if $A = P_1BP_2$, where $P_1$ and $P_2$ are signed permutation matrices. If $m = n$, the matrices $A$ and $B$ are \textit{equivalent} if $A$ is sign equivalent to $B$ or $B^T$.  Both forms of equivalence preserve (row) orthogonality and the combinatorial structure of the corresponding sign patterns. Not surprisingly, sign equivalence preserves having the SIPP. However, the same cannot always be said about equivalence (see Example~\ref{equivCounterEx}).

\begin{lemma}\label{sign equiv}
Let $A, B\in \R^{m\times n}$ be sign equivalent. Then $A$ has the SIPP if and only if $B$ has the SIPP.
\end{lemma}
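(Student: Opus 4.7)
The plan is to reduce the SIPP condition for $A = P_1 B P_2$ back to the one for $B$ by transporting the witness symmetric matrix through the signed permutations, using the following Hadamard identity that makes signed permutations play well with $\circ$: for any $M, N \in \R^{m \times n}$ and any signed permutation matrices $P_1$ (size $m$) and $P_2$ (size $n$),
\[
(P_1 M P_2) \circ (P_1 N P_2) = |P_1|\,(M \circ N)\,|P_2|,
\]
where $|P_i|$ denotes the (ordinary) permutation matrix obtained by taking absolute values entrywise. I would verify this in one line by writing $P_1 = D_1 \Pi_1$ and $P_2 = \Pi_2 D_2$ with $D_i$ diagonal $\pm 1$ and $\Pi_i$ permutation matrices: then $(P_1 M P_2)_{ij} = \varepsilon_1(i)\varepsilon_2(j) M_{\sigma_1(i),\sigma_2(j)}$, and the sign factors square to $1$ on the left-hand side.

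With that identity in hand, assume $B$ has the SIPP and write $A = P_1 B P_2$. First, $A$ has full rank since $P_1$ and $P_2$ are invertible. Now let $X$ be symmetric with $(XA) \circ A = O$. Since signed permutation matrices are orthogonal, define
\[
Y = P_1^T X P_1;
\]
then $Y^T = P_1^T X^T P_1 = Y$, so $Y$ is symmetric. Using $P_1 P_1^T = I$, I would rewrite
\[
XA = X P_1 B P_2 = P_1 (P_1^T X P_1) B P_2 = P_1 (YB) P_2,
\]
and also $A = P_1 B P_2$. Applying the Hadamard identity to these two products gives
\[
O = (XA) \circ A = (P_1 (YB) P_2) \circ (P_1 B P_2) = |P_1|\,\bigl((YB) \circ B\bigr)\,|P_2|.
\]
Since $|P_1|$ and $|P_2|$ are invertible, $(YB) \circ B = O$. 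The SIPP of $B$ then forces $Y = O$, hence $X = P_1 Y P_1^T = O$. This shows $A$ has the SIPP.

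The reverse implication is obtained by applying what was just proved with the roles swapped, since $B = P_1^T A P_2^T$ and $P_1^T, P_2^T$ are also signed permutation matrices. There is no real obstacle here; the only slightly nonroutine piece is noticing and verifying the Hadamard identity above, which is what makes the symmetric witness $Y$ transport correctly between $A$ and $B$.
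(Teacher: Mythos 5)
Your proof is correct and follows essentially the same route as the paper: conjugate the symmetric witness $X$ by $P_1$ to obtain a symmetric $Y$, then use the compatibility of signed permutations with the Hadamard product to transport the condition $(XA)\circ A = O$ into $(YB)\circ B = O$. The only difference is that you isolate and verify the Hadamard identity $(P_1 M P_2)\circ(P_1 N P_2) = |P_1|(M\circ N)|P_2|$ explicitly, whereas the paper leaves that step as an unstated observation.
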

\begin{proof}
It suffices to assume that $A$ has the SIPP and show that $B$ has the SIPP. Since $A$ and $B$ are sign equivalent, $B = P_1 A P_2$, where $P_1$ and $P_2$ are signed permutation matrices. Hence $B$ has full rank. Let $X\in \sym m$ and $Y=P_1^TXP_1$. Then $Y$ is symmetric and $X = P_1 Y P_1^T$. Suppose that $(XB)\circ B = O$. Then 
\[
(P_1 YA P_2)\circ (P_1 A P_2) = O
\]
and so $(YA)\circ A = O$. Since $A$ has the SIPP, $Y = O$. Consequently, 
\[
X = P_1 Y P_1^T = O.
\]
Therefore, $B$ has the SIPP.
\end{proof}

We now show that if $Q_1, Q_2\in \Orth(n)$ are equivalent, then $Q_1$ has the SIPP if and only if $Q_2$ has the SIPP. By Lemma~\ref{sign equiv} it suffices to prove the case $Q_1 = Q_2^T$.

\begin{proposition}\label{transpose}
Let $Q \in \Orth(n)$. Then $Q$ has the SIPP if and only if $Q^T$ has the SIPP.
\end{proposition}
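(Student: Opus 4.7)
The plan is to use the characterization of the SIPP in terms of symmetric matrices given by Definition~\ref{DefSIPP}, and exploit the fact that conjugation by an orthogonal matrix preserves symmetry. By Lemma~\ref{sign equiv} it suffices to show one direction: assume $Q$ has the SIPP and derive that $Q^T$ does too; the converse follows by applying the same argument with $Q^T$ in the role of $Q$.

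First I would set up the hypothesis for $Q^T$: let $X \in \sym{n}$ satisfy $(X Q^T) \circ Q^T = O$, and aim to conclude $X = O$. The natural move is to transpose this Hadamard identity, using $(A\circ B)^T = A^T \circ B^T$, which together with $X^T = X$ gives
\[
(Q X) \circ Q = O.
\]
Now I need to put $QX$ into the form $(\text{symmetric})\cdot Q$ to invoke the SIPP of $Q$. Since $Q$ is orthogonal, $QX = (QXQ^T)\, Q$, and the matrix $X' := QXQ^T$ is symmetric because conjugation by an orthogonal matrix preserves symmetry. Thus $(X' Q)\circ Q = O$ with $X'$ symmetric, and the SIPP of $Q$ forces $X' = O$. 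Multiplying by $Q^T$ on the left and $Q$ on the right recovers $X = O$, so $Q^T$ has the SIPP.

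I do not anticipate a real obstacle here: the whole argument is a two-line manipulation, and the only fact beyond linear algebra that is used is orthogonality of $Q$, which enters twice (once to make the transpose of $Q^T$ be $Q$, and once to ensure $QXQ^T$ is symmetric). The one small place to be careful is the transposition step, where I must remember that the Hadamard product commutes with transposition entrywise, and that the symmetry of $X$ is what collapses $QX^T$ into $QX$. Everything else is bookkeeping.
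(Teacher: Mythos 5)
Your argument is correct and slightly more direct than the paper's. The paper routes through Theorem~\ref{equivSIPP}: it starts from a not-necessarily-symmetric $X$ with $XQ^{-T}$ symmetric and $X\circ Q^T = O$, manipulates to show $X^T Q^{-1}$ is symmetric and $X^T\circ Q = O$, and then invokes Theorem~\ref{equivSIPP} for $Q$ to conclude $X^T=O$. You instead work straight from Definition~\ref{DefSIPP}: transpose the Hadamard identity to trade $Q^T$ for $Q$, and then conjugate the symmetric matrix by $Q$ to rewrite $QX$ as $(QXQ^T)Q$ with $QXQ^T$ symmetric. Both proofs hinge on the same two facts (that $Q^{-1}=Q^T$, and that conjugation by an orthogonal matrix preserves symmetry), so the ideas are close cousins, but yours avoids the detour through the $Y$-formulation of the SIPP and is a touch cleaner.

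One small bookkeeping point: citing Lemma~\ref{sign equiv} to justify proving only one direction is off target. That lemma is about sign-equivalent matrices, and $Q$ and $Q^T$ need not be sign equivalent. The actual reason one direction suffices is simply that $(Q^T)^T = Q$, so the statement is symmetric under swapping $Q$ and $Q^T$; this is also the justification the paper implicitly uses. This does not affect the validity of your proof.
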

\begin{proof}
It suffices to assume that $Q$ has the SIPP and show that $Q^T$ has the SIPP. Let $X\in \R^{n\times n}$. Suppose that $XQ^{-T}$ is symmetric and $X\circ Q^T = O$. By Theorem~\ref{equivSIPP} it remains to show that $X = O$. Note that $XQ^{-T} = XQ$. Since $XQ = Q^TX^T$,
\[
X^TQ^{-1} = QQ^TX^TQ^T = QXQQ^T = QX = (X^TQ^{-1})^T.
\]
Thus, $X^TQ^{-1}$ is symmetric. Further, $X^T\circ Q = O$ since $X\circ Q^T = O$. Having assumed $Q$ has the SIPP, $X^T = X = O$ and so $Q^T$ has the SIPP.
\end{proof}

If $M\in\R^{m\times n}$ is nowhere zero and has full rank, then $M$ has the SIPP (if $(XM)\circ M = O$, then $XM = O$ implying that $X = O$). On the other hand, Corollary~\ref{zeros} suggests that the SIPP becomes exceedingly rare amongst sparse matrices. Lemma~\ref{basics} demonstrates that matrices must avoid certain structural barriers in order to have the SIPP.

\begin{lemma}\label{basics}
Let $M\in \R^{m\times n}$. If $M$ has two rows with disjoint support, then $M$ does not have the SIPP.
\end{lemma}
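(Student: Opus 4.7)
The plan is to exhibit a nonzero symmetric $X$ for which $(XM)\circ M = O$, which immediately contradicts the second requirement in the definition of SIPP. Since SIPP requires this equation to force $X=O$, producing any such nonzero symmetric witness is enough; I do not even need to analyze whether $M$ has full rank.

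Let $i$ and $j$ be distinct row indices such that the rows $M[\{i\},:]$ and $M[\{j\},:]$ have disjoint supports. The natural candidate is the symmetric elementary ``swap'' matrix
\[
X = E_{ij} + E_{ji} \in \sym m,
\]
using the $m\times m$ versions of $E_{ij}$ and $E_{ji}$. Left multiplication by this $X$ has the effect of placing row $j$ of $M$ into row $i$ of $XM$, placing row $i$ of $M$ into row $j$ of $XM$, and zeroing out every other row. Concretely, the only possibly nonzero rows of $XM$ are
\[
(XM)[\{i\},:] = M[\{j\},:], \qquad (XM)[\{j\},:] = M[\{i\},:].
\]

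Now compute the Hadamard product $(XM)\circ M$. Its rows outside of $\{i,j\}$ are zero because the corresponding rows of $XM$ are zero. For row $i$, the entry in column $k$ equals $M_{jk} \cdot M_{ik}$, which vanishes because the hypothesis that rows $i$ and $j$ have disjoint support means $M_{ik}$ and $M_{jk}$ cannot both be nonzero for any $k$. The same reasoning handles row $j$. Hence $(XM)\circ M = O$, while $X$ is a nonzero symmetric matrix, so $M$ fails the SIPP.

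There is no real obstacle here; the only mild subtlety is remembering that the definition of SIPP has two clauses (full rank, and the transversality-type condition on $X$), and our argument directly violates the second clause regardless of rank. The construction $X = E_{ij}+E_{ji}$ is the guiding intuition that disjoint supports should be the main structural obstruction to SIPP, and this short computation confirms it.
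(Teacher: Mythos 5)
Your proof is correct and takes essentially the same approach as the paper's: you exhibit the symmetric witness $X = E_{ij}+E_{ji}$, which is exactly the permutation-conjugated version of the paper's $\left[\begin{smallmatrix}0&1\\1&0\end{smallmatrix}\right]\oplus O$, and check that $(XM)\circ M=O$ entry by entry using the disjoint supports. Your presentation is marginally cleaner in that you observe $X$ is nonzero directly from its definition, whereas the paper argues $XM\neq O$ (which tacitly relies on the two rows being nonzero); but this is a cosmetic refinement, not a different argument.
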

\begin{proof}
Assume that $M\in\R^{m\times n}$ has two rows with disjoint support. Up to permutation of rows and columns $M$ has the form
\[
\renewcommand*{\arraystretch}{1.4}
M =
\left[\begin{array}{c|c}
\bu^T & \bzero^T\\
\hline
\bzero^T & \bv^T \\
\hline
C & D
\end{array}\right],
\]
where $\bu$ and $\bv$ are nonzero. Observe that the $m\times m$ symmetric matrix
\[
X =
\left[\begin{array}{cc}
0 & 1\\
1 & 0
\end{array}\right]
\oplus
O.
\]
satisfies $(XM)\circ M = O$. However, $XM\not=O$ implying that $X \not= O$. Therefore, the matrix $M$ does not have the SIPP.
\end{proof}

There is no analog of Lemma~\ref{basics} for the columns of a matrix $M\in\R^{m\times n}$, i.e.~$M$ can have columns with disjoint support and the SIPP (see Proposition~\ref{mnSIPP}). Having established Lemma~\ref{basics} we can now show that Proposition~\ref{transpose} does not hold for arbitrary invertible square matrices.

\begin{example}\label{equivCounterEx}
Consider
\[
A =
\left[\begin{array}{rrr}
2 & 0 & 0\\
0 & 1 & 1\\
-2 & -1 & 1
\end{array}\right].
\]
Then
\[
A^{-1} = \frac12
\left[\begin{array}{rrr}
1 & 0 & 0\\
-1 & 1 & -1\\
1 & 1 & 1
\end{array}\right].
\]
By Lemma~\ref{basics} $A$ does not have the SIPP.

On the other hand, $A^T$ does have the SIPP. To see this, let $X\in\R^{3\times 3}$ satisfy $X\circ A^T = O$. Then $X$ must have the form
\[
X =
\left[\begin{array}{rrr}
0 & x_1 & 0\\
x_2 & 0 & 0\\
x_3 & 0 & 0
\end{array}\right],
\text{ and }
XA^{-T} =
\frac12
\left[\begin{array}{rrr}
0 & x_1 & x_1\\
x_2 & -x_2 & x_2\\
x_3 & -x_3 & x_3
\end{array}\right].
\]
Assuming $XA^{-T}$ is symmetric implies $x_1 = x_2 = x_3 = 0$. 
\end{example}

The proof of Proposition~\ref{transpose} relies on the fact that $Q^T$ is the inverse of $Q$ when $Q\in\Orth(n)$. It may therefore be tempting to try and prove that if $A$ has the SIPP then so must $A^{-1}$. As the next example illustrates, this is not always the case. 

\begin{example}
Consider
\[
A =
\left[\begin{array}{rrr}
1 & 1 & 0\\
0 & 1 & 1\\
1 & 0 & 1
\end{array}\right].
\]
Then
\[
A^{-1} = \frac12
\left[\begin{array}{rrr}
1 & -1 & 1\\
1 & 1 & -1\\
-1 & 1 & 1
\end{array}\right].
\]
Since $A^{-1}$ is nowhere zero, it has the SIPP. To see that $A$ does not have the SIPP, choose
\[
X =
\left[\begin{array}{rrr}
0 & 0 & 1\\
1 & 0 & 0\\
0 & 1 & 0
\end{array}\right],
\]
and note that $X\circ A = O$ and $XA^{-1}$ is symmetric.
\end{example}

Occasionally it is possible to verify that a matrix has the SIPP by checking if a certain submatrix has the SIPP.

\begin{proposition}\label{removeRow}
Assume $2\leq m\leq n-1$. Let $\hat{B}\in\R^{m\times n}$ and $B = \begin{bmatrix}\hat{B}\\ \cmidrule(lr){1-1} \bb^T \end{bmatrix}$ for some $\bb\in\R^n$.
\begin{enumerate}
\item[\rm (i)]
If $B$ has the SIPP, then $\hat{B}$ has the SIPP.

\item[\rm (ii)]
If $\hat{B}$ has the SIPP, the rows of $B$ are linearly independent and $\bb$  is nowhere zero, then $B$ has the SIPP.
\end{enumerate}
\end{proposition}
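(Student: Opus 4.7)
The plan is to handle the two implications separately, reducing the SIPP condition for one of $B$, $\hat{B}$ to the SIPP condition for the other via a block decomposition of the symmetric witness matrix.

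For part (i), assume $B$ has the SIPP. Full rank of $\hat{B}$ is immediate: $B$ has rank $m+1$, so deleting one row leaves rank at least $m$, hence exactly $m$. For the main condition, given $X\in\sym{m}$ with $(X\hat{B})\circ \hat{B} = O$, I would form the zero-padded symmetric matrix
\[
\tilde X = \begin{bmatrix} X & \bzero \\ \bzero^T & 0 \end{bmatrix} \in \sym{m+1},
\]
so that $\tilde X B = \begin{bmatrix} X\hat{B} \\ \bzero^T \end{bmatrix}$ and $(\tilde X B)\circ B = O$. The SIPP for $B$ then gives $\tilde X = O$, hence $X = O$.

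For part (ii), assume $\hat{B}$ has the SIPP, the rows of $B$ are linearly independent, and $\bb$ is nowhere zero. Full rank of $B$ is the linear independence hypothesis. Suppose $X \in \sym{m+1}$ satisfies $(XB) \circ B = O$; write
\[
X = \begin{bmatrix} X_{11} & \bx_{12} \\ \bx_{12}^T & x_{22} \end{bmatrix}
\]
with $X_{11}\in\sym{m}$, $\bx_{12}\in\R^{m}$, and $x_{22}\in\R$, so that
\[
XB = \begin{bmatrix} X_{11}\hat{B} + \bx_{12}\bb^T \\ \bx_{12}^T\hat{B} + x_{22}\bb^T \end{bmatrix}.
\]
The Hadamard-product condition on the bottom block reads $(\bx_{12}^T\hat{B} + x_{22}\bb^T)\circ \bb^T = O$; since $\bb$ is nowhere zero, this forces the ordinary equation $\bx_{12}^T\hat{B} = -x_{22}\bb^T$. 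Linear independence of the rows of $B$ means $\bb^T$ is not in the row space of $\hat{B}$, forcing $x_{22}=0$ and then $\bx_{12}^T\hat{B}=\bzero^T$; since $\hat{B}$ has full row rank, $\bx_{12}=\bzero$. The top block of $(XB)\circ B = O$ now collapses to $(X_{11}\hat{B})\circ \hat{B} = O$, and the SIPP of $\hat{B}$ yields $X_{11}=O$. Hence $X=O$ and $B$ has the SIPP.

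The only real subtlety lies in part (ii): one must use the nowhere-zero hypothesis to promote a Hadamard-product identity on the last row to an honest linear identity, and then exploit both the linear independence of the rows of $B$ (to kill $x_{22}$) and the full row rank of $\hat{B}$ (to kill $\bx_{12}$) before the SIPP hypothesis on $\hat{B}$ becomes applicable on the upper block. Part (i) is essentially bookkeeping once one realizes that padding $X$ with a zero row and column preserves symmetry and converts the hypothesis for $\hat{B}$ into the one for $B$.
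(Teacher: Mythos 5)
Your proof is correct and follows essentially the same route as the paper: zero-padding the symmetric witness in part (i), and in part (ii) block-decomposing $X$, using the nowhere-zero hypothesis on $\bb$ to upgrade the Hadamard identity on the last row to a genuine linear identity, and then killing the border entries before invoking the SIPP of $\hat{B}$ on the top block. The only cosmetic difference is that the paper dispatches $\bx_{12}=\bzero$ and $x_{22}=0$ in one step by reading $\bx_{12}^T\hat{B}+x_{22}\bb^T=\bzero^T$ directly as a vanishing linear combination of the linearly independent rows of $B$, whereas you split it into two sub-steps; both are fine.
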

\begin{proof}
We begin by proving (i). Assume $B$ has the SIPP. Let $\hat{X}\in \sym m$ satisfy $(\hat{X}\hat{B})\circ\hat{B} = O$. We must show that $\hat{X} = O$. Consider the $(m+1)\times(m+1)$ symmetric matrix
\[
\renewcommand*{\arraystretch}{1.4}
X =
\left[\begin{array}{c|c}
\hat{X} & \bzero\\
\hline
\bzero^T & 0
\end{array}\right].
\]
Then
\[
(XB)\circ B = 
\begin{bmatrix}
(\hat{X}\hat{B})\circ\hat{B}\\
\cmidrule(lr){1-1}
\bzero^T
\end{bmatrix}
 = O.
\]
Since $B$ has the SIPP we know that $X = O$. Thus, $\hat{X} = O$ and so $\hat{B}$ has the SIPP.

We now prove (ii). Assume $\hat{B}$ has the SIPP, the rows of $B$ are linearly independent and $\bb$  is nowhere zero. Let $X\in \sym{m+1}$ satisfy $(XB)\circ B = O$. We must show that $XB = O$. Observe that $X$ has the form 
\[
\renewcommand*{\arraystretch}{1.4}
X = 
\left[\begin{array}{c|c}
\hat{X} & \bx\\
\hline
\bx^T & x_1 
\end{array}\right]
\]
for some $\hat{X}\in\sym m$, $\bx\in \R^m$ and $x_1\in\R$. Then
\[
O = (XB) \circ B = \begin{bmatrix} (\hat{X}\hat{B} + \bx\bb^T) \circ \hat{B}\\\cmidrule(lr){1-1} (\bx^T\hat{B} + x_1\bb^T)\circ \bb^T\end{bmatrix}
\]
and so
\begin{align}
O &=( \hat{X}\hat{B} + \bx\bb^T) \circ \hat{B} \label{e1},\\
\bzero^T &= (\bx^T\hat{B} + x_1\bb^T)\circ \bb^T \label{e2}.
\end{align}
 Since $\bb$ is nowhere zero, (\ref{e2}) becomes
\[
\bx^T\hat{B} + x_1\bb^T = \bzero^T.
\]
Having assumed the rows of $B$ are linearly independent, we conclude that $\bx =\bzero$ and $x_1 = 0$. Since $\bx = \bzero$, (\ref{e1}) reduces to $(\hat{X}\hat{B})\circ \hat{B} = O$ and since $\hat{B}$ has the SIPP, $\hat{X} = O$. Hence $X=O$ proving that $B$ has the SIPP.
\end{proof}

\begin{proposition}\label{mnSIPP}
Let $A\in\R^{m\times n}$ and $p>n$. Then $A$ has the SIPP if and only if the $m\times p$ matrix $B = \left[\begin{array}{@{}c|c@{}}A & O\end{array}\right]$ has the SIPP.
\end{proposition}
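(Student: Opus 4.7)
The plan is to reduce the SIPP condition for $B$ to the SIPP condition for $A$ by direct computation, using the block structure $B=[A\mid O]$. Nothing in the definition depends on $p$ beyond the right-hand block being all zeros, so the equivalence should essentially be automatic once one writes down $XB$ and $(XB)\circ B$.

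First I would observe that for any $X\in\sym m$, the product $XB$ inherits the block form $XB=[XA\mid O]$, since the zero columns of $B$ contribute zero columns to the product. Taking the Hadamard product with $B$ again preserves this block structure:
\[
(XB)\circ B \;=\; \bigl[\,(XA)\circ A \,\big|\, O\,\bigr].
\]
Consequently, $(XB)\circ B = O$ if and only if $(XA)\circ A = O$. This already identifies the symmetric matrices killed by the two Hadamard conditions.

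Next I would address the full-rank requirement in Definition~\ref{DefSIPP}. Because $B$ is obtained from $A$ by appending $p-n$ zero columns, the row spaces of $A$ and $B$ coincide as subspaces of the natural embedding, and in particular $\rank B=\rank A$. Since the SIPP requires $m\leq n$ and full (row) rank, we have $A$ full rank iff $\rank A=m$ iff $\rank B=m$ iff $B$ full rank; note also $m\le n<p$ so the dimension hypothesis of Definition~\ref{DefSIPP} is automatically satisfied for $B$ whenever it is for $A$.

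Combining the two observations finishes the argument: $A$ has the SIPP iff $A$ has full rank and $X=O$ is the unique symmetric solution to $(XA)\circ A=O$, which by the two reductions above is equivalent to $B$ having full rank and $X=O$ being the unique symmetric solution to $(XB)\circ B=O$, i.e., $B$ having the SIPP. There is no real obstacle here; the only thing to be careful about is to write the equivalence in both directions so that the ``only if'' and ``if'' parts of the iff are both covered by the same block computation.
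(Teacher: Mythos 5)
Your proposal is correct and follows essentially the same approach as the paper: both reduce $(XB)\circ B = O$ to $(XA)\circ A = O$ via the block structure and note that $A$ and $B$ have the same rank. The only cosmetic difference is that you compute the block form $(XB)\circ B = \bigl[(XA)\circ A \mid O\bigr]$ once and read off both directions, whereas the paper writes out the two implications separately.
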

\begin{proof}
Clearly $A$ has full rank if and only if $B$ has full rank. 

Begin by assuming $A$ has the SIPP. Let $X \in \sym m$ and suppose $(XB) \circ B = O$. Then $(XA)\circ A = O$ and since $A$ has the SIPP, we have $X = O$. Thus, $B$ has the SIPP.

Now assume $B$ has the SIPP. Let $X \in \sym m$ and suppose $(XA) \circ A = O$. Then $(XB)\circ B = O$ and since $B$ has the SIPP $X = O$. Thus, $A$ has the SIPP.
\end{proof}

%
%
\section{Development and Motivation Behind the SIPP}\label{develop}
The primary goal in this section is to motivate and rigorously develop the SIPP. We will also show how to use families of (preferably sparse) matrices with the SIPP to obtain larger families of sign patterns that allow orthogonality. In order to accomplish this, it is necessary to understand some facts about smooth manifolds. We refer the reader to \cite{Lee2003} for more details.  

Let $\M$ be a smooth manifold in $\R^d$ and let $\bx\in \M$. Define $\mathcal{P}_\M$ to be the set of smooth paths $\gamma:(-1,1)\to\M$, and let $\dot{\gamma}$ be the derivative of $\gamma$ 
with respect to $t \in (-1,1)$. The \defi{tangent space} of $\M$ at $\bx$ is
\[
\T_{\M\cdot \bx} = \{\dot{\gamma}(0) : \gamma\in\mathcal{P}_{\M} \text{ and }\gamma(0) = \bx\}
\]
and the \defi{normal space} to $\M$ at $\bx$, denoted $\norm_{\M\cdot \bx}$, is the orthogonal complement of $\T_{\M\cdot \bx}$. 
Note that as vector spaces 
\[
\dim(\T_{\M\cdot \bx}) = \dim(\M), \text{ and } \dim(\norm_{\M\cdot \bx}) = d - \dim(\T_{\M\cdot \bx}).
\]

Let $A,B\in \R^{m\times n}$ and $S$ be an $m\times n$ sign pattern. Both $\Orth(m,n)$ and $\Q(S)$ are smooth manifolds \cite{Lee2003} and can both be thought of as submanifolds of $\R^{mn}$ with inner product 
\[
\ip{A,B} = \tr(AB^T) = \ve(A)^T\ve(B).
\]
This identification follows from $\ve:\R^{m\times n}\to\R^{mn}$. Notice that there exists a matrix $Q\in\Orth(m,n)$ with sign pattern $S$ if and only if the intersection of $\Orth(m,n)$ and $\Q(S)$ is nonempty.

The smooth manifolds $\M_1$ and $\M_2$, both in $\R^d$, intersect \defi{transversally} at $\bx$ if 
\[
\bx\in \M_1\cap \M_2, \text{ and } \T_{\M_1\cdot \bx} + \T_{\M_2\cdot \bx} = \R^d
\]
or equivalently 
\[
\bx\in \M_1\cap \M_2, \text{ and } \norm_{\M_1\cdot \bx} \cap \norm_{\M_2\cdot \bx} = \{\bzero\}.
\]
A \defi{smooth family of manifolds} $\M(t)$ in $\R^d$ is defined by a continuous function $\varphi:U\times(-1,1)\to \R^d$, where $U$ is an open set in $\R^d$ and for each $t\in(-1,1)$ the function $\varphi(\cdot,t)$ is a diffeomorphism between $U$ and the manifold $M(t)$. Theorem~\ref{IFT} below is a specialization of Lemma 2.2 in \cite{Holst2009} and is stated with proof in \cite{BARRETT2015}.

\begin{theorem}[Holst \textit{et al.}~\cite{Holst2009}]\label{IFT}
Let $\M_1(s)$ and $\M_2(t)$ be smooth families of manifolds in $\R^d$, and assume that $\M_1(0)$ and $\M_2(0)$ intersect transversally at $y_0$. Then there is a neighborhood $W\subseteq \R^2$ of the origin and a continuous function $f:W\to \R^d$ such that $f(0,0) = y_0$ and for each $\epsilon = (\epsilon_1, \epsilon_2)\in W$, $\M_1(\epsilon_1)$ and $\M_2(\epsilon_2)$ intersect transversally at $f(\epsilon)$.
\end{theorem}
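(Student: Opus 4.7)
The plan is to reduce the theorem to the classical implicit function theorem, with the transversality hypothesis supplying the surjectivity that makes the theorem applicable. Set $d_i = \dim \M_i(0)$ and let $\varphi_i$ be the parametrizations of the two smooth families, with base points $u_i^0$ chosen so that $\varphi_1(u_1^0, 0) = \varphi_2(u_2^0, 0) = y_0$.

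First I would introduce the difference map
\[
\Phi(u_1, u_2, s, t) = \varphi_1(u_1, s) - \varphi_2(u_2, t),
\]
whose zeros encode intersection points: $\Phi = 0$ exactly when the common value $\varphi_1(u_1, s) = \varphi_2(u_2, t)$ lies in $\M_1(s) \cap \M_2(t)$. A direct computation shows the partial derivative $D_{(u_1, u_2)}\Phi$ at $(u_1^0, u_2^0, 0, 0)$ has image $\T_{\M_1(0) \cdot y_0} + \T_{\M_2(0) \cdot y_0}$, and transversality forces this sum to equal $\R^d$. So the partial is surjective of rank $d$.

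Since $D_{(u_1, u_2)}\Phi$ has rank $d$, some $d \times d$ submatrix of its Jacobian is invertible. I would split the coordinates of $(u_1, u_2)$ as $(w', w'')$ with $w' \in \R^d$ chosen so that $D_{w'}\Phi$ is invertible at the base point, and apply the implicit function theorem to obtain a smooth map $w' = g(w'', s, t)$ solving $\Phi = 0$ near $(w''_0, 0, 0)$. Fixing $w'' = w''_0$ and transporting back to the original coordinates produces smooth maps $\tilde u_i(s, t)$ with $\varphi_1(\tilde u_1(s, t), s) = \varphi_2(\tilde u_2(s, t), t)$ on a neighborhood $W \subseteq \R^2$ of the origin. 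Setting $f(s, t) := \varphi_1(\tilde u_1(s, t), s)$ yields a continuous (in fact smooth) function with $f(0, 0) = y_0$ and $f(\epsilon) \in \M_1(\epsilon_1) \cap \M_2(\epsilon_2)$ for every $\epsilon \in W$.

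The remaining step, and in my view the main technical point, is upgrading ``intersects'' to ``intersects transversally'' at $f(\epsilon)$. I would encode transversality at $y_0$ as the non-vanishing of some $d \times d$ minor of the matrix whose columns are the concatenated bases of $\T_{\M_1(0) \cdot y_0}$ and $\T_{\M_2(0) \cdot y_0}$. Because $\T_{\M_i(\epsilon_i) \cdot f(\epsilon)}$ is the column span of $D_{u_i}\varphi_i(\tilde u_i(\epsilon), \epsilon_i)$ and both $\varphi_i$ and $\tilde u_i$ depend continuously on $\epsilon$, the relevant minor is a continuous function of $\epsilon$ that is nonzero at $\epsilon = 0$. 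Shrinking $W$ so the minor stays nonzero ensures $\T_{\M_1(\epsilon_1) \cdot f(\epsilon)} + \T_{\M_2(\epsilon_2) \cdot f(\epsilon)} = \R^d$ for every $\epsilon \in W$, completing the proof.
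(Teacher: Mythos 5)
The paper does not include its own proof of this theorem: it is attributed to Holst, Lov\'asz, and Schrijver (Lemma~2.2 of \cite{Holst2009}), and the text notes that a proof of this specialized statement appears in \cite{BARRETT2015}. So there is no in-paper argument to compare against. That said, your proposal follows the standard implicit-function-theorem route that those references take, and the overall structure is sound: encode the intersection as the zero set of $\Phi = \varphi_1 - \varphi_2$, use transversality to certify surjectivity of $D_{(u_1,u_2)}\Phi$ at the base point, invoke the implicit function theorem to produce the intersection point $f(\epsilon)$ continuously in $\epsilon$, and then observe that transversality is an open condition so it persists after shrinking $W$.

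One bookkeeping point deserves care. You repeatedly upgrade ``continuous'' to ``smooth'' --- for the implicit function $g$, for $\tilde u_i$, and for $f$ --- but the paper's definition of a smooth family only requires the parametrization $\varphi:U\times(-1,1)\to\R^d$ to be jointly \emph{continuous}, with $\varphi(\cdot,t)$ a diffeomorphism for each fixed $t$. The classical $C^1$ implicit function theorem you invoke would need $\Phi$ (hence each $\varphi_i$) to be $C^1$ jointly in $(u_i,s,t)$, which the stated hypotheses do not grant; indeed the conclusion of the theorem only asks that $f$ be continuous. The remedy is to use a parametric form of the implicit function theorem in which $\Phi$ is $C^1$ in $(u_1,u_2)$ with $D_{(u_1,u_2)}\Phi$ jointly continuous, while the dependence on the parameters $(s,t)$ is merely continuous; this yields a continuous $g$, which is all you need. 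Likewise, your openness argument for transversality uses that $D_{u_i}\varphi_i(\tilde u_i(\epsilon),\epsilon_i)$ is a continuous function of $\epsilon$; this joint continuity of the $u$-derivative is exactly the additional (implicit) regularity the definition should be read as supplying. You also tacitly repair a slip in the paper's definition --- the domain $U$ should be open in $\R^{\dim\M_i}$, not $\R^d$, for ``diffeomorphism onto $\M(t)$'' to make dimensional sense --- which is the correct reading. None of this alters the shape of the argument, but making the regularity hypotheses explicit is where the rigor has to live.
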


It is useful to think of Theorem~\ref{IFT} as saying if the manifolds $\M_1$ and $\M_2$ intersect transversally, then small perturbations of $\M_1$ and $\M_2$ still intersect (transversally) in a continuous way. Our aim is to apply these ideas to $\Orth(m,n)$ and $\Q(S)$. This requires the appropriate tangent and normal spaces.

The Stiefel manifold $\text{St}(n,m) = \{X \in \R^{n\times m}: X^T X = I\}$. Observe that $X\in\text{St}(n,m)$ if and only if $X^T\in\Orth(m,n)$. Since this identification preserves dimension we may use the calculation of $\dim(\text{St}(n,m))$ in \cite{Absil2008} to obtain
\begin{equation}\label{dim}
\dim(\Orth(m,n)) = nm - \frac12m(m+1).
\end{equation}
Note that for every $Q\in\Orth(m,n)$ there exists a matrix $P\in\Orth(n)$ such that $QP^T = [I\ O]$; $P$ is not unique unless $m=n$. With these observations we now compute the tangent space of $\Orth(m,n)$.

\begin{lemma}\label{TOmn}
Let $Q\in\Orth(m,n)$. Then
\[
\T_{\Orth(m,n)\cdot Q} = \{X\in\R^{m\times n}: XQ^T \in\skw{m}\}.
\]
Further, there exists some $P\in\Orth(n)$ such that
\[
\T_{\Orth(m,n)\cdot Q} = \left\{\left[\begin{array}{@{}c|c@{}}K & L\end{array}\right]P: K\in\skw{m} \text{ and } L\in\R^{m\times(n-m)}\right\}.
\]
\end{lemma}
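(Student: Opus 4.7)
The plan is to prove the two equalities separately. The first equality splits into an easy ``$\subseteq$'' containment by differentiating the defining relation and a harder ``$\supseteq$'' containment by a dimension count; this dimension count is the only nontrivial step, since without it one would have to explicitly construct smooth paths realizing every skew constraint. The second equality then follows from the first by a convenient change of basis.

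For ``$\subseteq$'' in the first equality, I take any $\gamma \in \mathcal{P}_{\Orth(m,n)}$ with $\gamma(0) = Q$ and differentiate the identity $\gamma(t)\gamma(t)^T = I$ at $t = 0$, obtaining $\dot\gamma(0) Q^T + Q\dot\gamma(0)^T = O$, which is precisely the statement that $\dot\gamma(0) Q^T \in \skw{m}$. For ``$\supseteq$'', I view the right-hand side as $\phi^{-1}(\skw{m})$ for the linear map $\phi : \R^{m\times n} \to \R^{m\times m}$ defined by $\phi(X) = XQ^T$. Because $Q$ has $m$ orthonormal rows, $\phi$ is surjective with $\dim \ker \phi = m(n-m)$, so $\dim \phi^{-1}(\skw{m}) = m(n-m) + \binom{m}{2} = mn - \tfrac{1}{2}m(m+1)$; by (\ref{dim}) this equals $\dim \Orth(m,n)$, and two subspaces of $\R^{m\times n}$ of equal dimension with one contained in the other must coincide.

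For the ``further'' statement, I fix $P \in \Orth(n)$ with $QP^T = \left[\begin{array}{@{}c|c@{}} I & O \end{array}\right]$, which exists by the remarks preceding the lemma. For any $X \in \R^{m\times n}$, I partition $XP^T = \left[\begin{array}{@{}c|c@{}} K & L \end{array}\right]$ with $K$ of size $m\times m$ and $L$ of size $m\times(n-m)$. Then, using $P^T P = I$ and the block computation $\left[\begin{array}{@{}c|c@{}} K & L \end{array}\right]\left[\begin{array}{@{}c|c@{}} I & O \end{array}\right]^T = K$, one gets $XQ^T = (XP^T)(QP^T)^T = K$. By the first equality, $X$ lies in the tangent space iff $K \in \skw{m}$, so right-multiplying $XP^T$ by $P$ yields the claimed expression $X = \left[\begin{array}{@{}c|c@{}} K & L \end{array}\right]P$ with $K$ skew and $L$ arbitrary, and the converse direction is immediate from the same computation.
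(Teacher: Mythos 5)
Your proof is correct, and its overall strategy matches the paper's: differentiate $\gamma(t)\gamma(t)^T = I$ for the easy containment, then appeal to equation~(\ref{dim}) for a dimension count, and use $P\in\Orth(n)$ with $QP^T = [I\ O]$ for the ``further'' statement. The one genuine difference is how the dimension of $V = \{X : XQ^T \in \skw{m}\}$ is computed. The paper first establishes the change-of-basis description $V = \{[K\ L]P : K\in\skw{m},\ L\in\R^{m\times(n-m)}\}$ and then reads off $\dim(V) = \binom{m}{2} + m(n-m)$ by counting parameters, so the second equality does double duty in the argument. You instead compute $\dim(V)$ directly by observing that $V = \phi^{-1}(\skw{m})$ for the surjective linear map $\phi(X) = XQ^T$ and applying rank--nullity, which makes the first equality self-contained and independent of the choice of $P$; the second equality is then a separate, purely algebraic manipulation. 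Your modularization is slightly cleaner conceptually, while the paper's version has the minor economy of establishing both representations in one sweep. Both are correct and of comparable length; one small thing worth stating explicitly in your write-up is that surjectivity of $\phi$ follows from $QQ^T = I$ (take $X = YQ$ to hit any $Y$), since you asserted it from orthonormality of the rows without giving the one-line verification.
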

\begin{proof}
Let $\gamma(t)$ be a differentiable path in $\Orth(m,n)$ such that $\gamma(0) = Q$. Then $\gamma(t)\gamma(t)^T = I$ and by taking derivatives of both sides
\[
\dot{\gamma}(0)Q^T + Q\dot{\gamma}(0)^T = O.
\]
It follows that $\T_{\Orth(m,n)\cdot Q}$ is contained in the vector space
\[
V =\{X\in\R^{m\times n}: XQ^T \in \skw m\}.
\]
Let $P\in\Orth(n)$ satisfy $QP^T= \left[\begin{array}{@{}c|c@{}}I & O\end{array}\right]$. For each $X\in \R^{m\times n}$ let $Y = XP^T$ so that $YP = X$. Using the substitution $Y = XP^T$
\begin{align*}
V &=\{X\in\R^{m\times n}: XP^TPQ^T \in \skw m\}\\
&=\{YP: Y \left[\begin{array}{@{}c|c@{}}I & O\end{array}\right]^T \in \skw m\}.
\end{align*}
Notice that $Y [I\ O]^T \in \skw m$ implies $Y$ has the form $\left[\begin{array}{@{}c|c@{}}K & L\end{array}\right]$ for some $K\in\skw m$ and $L\in\R^{m\times(n-m)}$. Thus,
\[
V=\left\{\left[\begin{array}{@{}c|c@{}}K & L\end{array}\right] P: K\in\skw{m} \text{ and } L\in\R^{m\times(n-m)}\right\}.
\]
It follows from the invertibility of $P$ that $V$ has the same dimension as
\[
\left\{\left[\begin{array}{@{}c|c@{}}K & L\end{array}\right]: K\in \skw m \text{ and }L\in \R^{m\times(n-m)}\right\}.
\]
Thus,
\[
\dim(V) = \binom{m}{2} + m(n-m) = mn - \binom{m+1}{2} = \dim(\Orth(m,n)),
\]
where the last equality comes from (\ref{dim}). Therefore, $\T_{\Orth(m,n)\cdot Q} = V$. 
\end{proof}

The second representation of $\T_{\Orth(m,n)\cdot Q}$ in Lemma~\ref{TOmn} will prove useful in Section~\ref{SecVerif} when we introduce the verification matrices. The first representation facilitates the  proof of the following lemma. We next determine the normal space $\norm_{\Orth(m,n)\cdot Q}$.

\begin{lemma}\label{NOmn}
Let $Q\in\Orth(m,n)$. Then
\[
\norm_{\Orth(m,n)\cdot Q} =\{ZQ : Z\in\sym m\}.
\]
\end{lemma}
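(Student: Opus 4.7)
The plan is to exhibit $\{ZQ : Z \in \sym m\}$ as a subspace of $\norm_{\Orth(m,n)\cdot Q}$ of the correct dimension, and then invoke a dimension count to conclude equality. Let me set $N = \{ZQ : Z \in \sym m\}$.

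First I would verify the inclusion $N \subseteq \norm_{\Orth(m,n)\cdot Q}$. Take any $X \in \T_{\Orth(m,n)\cdot Q}$ and any $Z \in \sym m$. Using the inner product $\ip{A,B} = \tr(AB^T)$ from the preceding discussion, compute
\[
\ip{X, ZQ} = \tr\bigl(X(ZQ)^T\bigr) = \tr\bigl(XQ^T Z^T\bigr) = \tr\bigl((XQ^T)Z\bigr).
\]
By Lemma~\ref{TOmn}, $XQ^T \in \skw m$, while $Z \in \sym m$. The trace of the product of a skew symmetric and a symmetric matrix vanishes (since $\tr(AB) = \tr((AB)^T) = \tr(B^T A^T) = -\tr(BA) = -\tr(AB)$ for such $A$ and $B$), so $\ip{X, ZQ} = 0$. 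This shows every element of $N$ is orthogonal to $\T_{\Orth(m,n)\cdot Q}$, that is, $N \subseteq \norm_{\Orth(m,n)\cdot Q}$.

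Next I would compute $\dim N$. The map $Z \mapsto ZQ$ from $\sym m$ to $\R^{m \times n}$ is linear, and it is injective because $Q$ has full row rank: if $ZQ = O$, then $ZQQ^T = O$, so $Z = O$. Hence
\[
\dim N = \dim \sym m = \binom{m+1}{2} = \tfrac12 m(m+1).
\]
On the other hand, combining (\ref{dim}) with the general formula $\dim(\norm_{\M\cdot \bx}) = d - \dim(\T_{\M\cdot \bx})$, where here $d = mn$, gives
\[
\dim \norm_{\Orth(m,n)\cdot Q} = mn - \Bigl(mn - \tfrac12 m(m+1)\Bigr) = \tfrac12 m(m+1).
\]

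Since $N$ is a subspace of $\norm_{\Orth(m,n)\cdot Q}$ of the same (finite) dimension, the two spaces coincide, establishing the claim. The only slightly delicate step is the vanishing of $\tr((XQ^T)Z)$, but this is the standard fact that $\sym m$ and $\skw m$ are orthogonal with respect to the trace inner product; everything else is linear algebra bookkeeping on top of Lemma~\ref{TOmn} and the dimension formula already in hand.
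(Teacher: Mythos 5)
Your proof is correct and follows essentially the same approach as the paper: establish the inclusion $\{ZQ : Z \in \sym m\} \subseteq \norm_{\Orth(m,n)\cdot Q}$ via the trace inner product and the orthogonality of $\sym m$ and $\skw m$, then conclude by a dimension count using (\ref{dim}). Your write-up is slightly more explicit (spelling out why the trace vanishes and why $Z \mapsto ZQ$ is injective), but these are the same steps the paper leaves implicit.
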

\begin{proof}
Let
\[
W = \{ZQ: Z \in \sym m\}.
\]
Let $XQ^T\in\skw{m}$ and $Z\in\sym m$. Then
\[
\ip{ZQ,X} = \tr((ZQ)^TX) = \tr(Q^T Z X) = \tr(ZXQ^T) = 0,
\]
with the last equality coming from $Z$ being symmetric and $XQ^T$ being skew-symmetric.
Hence, by Lemma~\ref{TOmn}, $W$ is contained in $\norm_{\Orth(m,n)\cdot Q}$. Since the rows of $Q$ are linearly independent, the dimension of $W$ is equal to the dimension of $\sym m$. Thus,
\[
\dim(W) = \binom{m+1}{2} = mn - \dim(\T_{\Orth(m,n)\cdot Q}),
\]
where the second equality follows from (1). We conclude that $\norm_{\Orth(m,n)\cdot Q} = W$.
\end{proof}

We now compute the tangent and normal spaces of the manifold  $\Q(S)$ of matrices with a given sign pattern $S$.

\begin{lemma}\label{Qs}
Let $A\in\R^{m\times n}$ have sign pattern $S$. Then
\[
\T_{\Q(S)\cdot A} = \spn{E_{ij}: a_{ij}\not=0}
\]
and
\[
\norm_{\Q(S)\cdot A} = \{X : X\circ A = O\}.
\]
\end{lemma}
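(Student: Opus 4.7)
The plan is to recognize $\Q(S)$ as an open submanifold of a particular linear subspace of $\R^{m\times n}$, which reduces both computations to linear algebra.

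First I would observe that if $\Sigma = \{(i,j) : a_{ij} \neq 0\}$ is the support of $A$ (equivalently, of $S$), then the subspace $V = \spn{E_{ij} : (i,j)\in\Sigma}$ consists of all matrices whose zero pattern contains that of $S$. The set $\Q(S)$ is exactly the subset of $V$ where each entry indexed by $\Sigma$ has the prescribed sign $s_{ij}\in\{-1,1\}$. Since the conditions $s_{ij}\cdot x_{ij} > 0$ for $(i,j)\in\Sigma$ are strict inequalities, $\Q(S)$ is an open subset of $V$, and hence is a smooth submanifold of $\R^{m\times n}$ whose dimension equals $\dim V = |\Sigma|$.

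For the tangent space, since $\Q(S)$ is open in the vector space $V$ and $A\in\Q(S)$, any tangent vector can be realized by a straight-line path $\gamma(t) = A + tE_{ij}$ (with $|t|$ small enough to preserve signs) for each $(i,j)\in\Sigma$, giving $\dot\gamma(0) = E_{ij}$. Conversely, any smooth path in $\Q(S)$ through $A$ has derivative in $V$, since every matrix in $\Q(S)$ lies in $V$. Therefore $\T_{\Q(S)\cdot A} = V = \spn{E_{ij} : a_{ij}\neq 0}$.

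For the normal space, I would use the inner product $\ip{X,Y} = \tr(XY^T)$, noting that $\ip{X,E_{ij}} = x_{ij}$. A matrix $X$ lies in $\norm_{\Q(S)\cdot A}$ exactly when $\ip{X,E_{ij}} = 0$ for every $(i,j)\in\Sigma$, which is the condition $x_{ij}=0$ whenever $a_{ij}\neq 0$. This is precisely the statement that $X\circ A = O$, yielding the claimed description of $\norm_{\Q(S)\cdot A}$. There is no real obstacle here; the only subtlety is making sure to frame $\Q(S)$ as an open submanifold of $V$ rather than trying to cut it out by equations, so that the tangent space is the entire ambient subspace $V$.
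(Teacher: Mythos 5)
Your proof is correct and takes essentially the same route as the paper: exhibit the straight-line paths $\gamma(t) = A + tE_{ij}$ to show that each $E_{ij}$ with $a_{ij}\neq 0$ is tangent, observe that any smooth path in $\Q(S)$ has values (hence derivative) in the coordinate subspace $\spn{E_{ij}: a_{ij}\neq 0}$, and obtain the normal space as the orthogonal complement under $\ip{X,Y} = \tr(XY^T)$. Your explicit framing of $\Q(S)$ as an open subset of that linear subspace is a slightly cleaner way to package the same argument, but the substance is identical to the paper's.
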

\begin{proof}
For each $i$ and $j$, where $a_{ij}\not=0$, define $\gamma$ to be the path on $\Q(S)$ given by $\gamma(t) = A + tE_{ij}$. Then $\gamma(0) = A$ and $\dot{\gamma}(0) = E_{ij}$. Thus, 
\[ 
\T_{\Q(S)\cdot A} \supseteq \spn{E_{ij}: a_{ij}\not=0}.
\]
Now let $\gamma \in \mathcal{P}_{\Q(S)}$ such that $\gamma(0) = A$. Then $\gamma(t)\in \spn{E_{ij}: a_{ij}\not=0}$ for each $t\in(-1,1)$ and so $\dot{\gamma}(0)\in \spn{E_{ij}: a_{ij}\not=0}$. Therefore, $\T_{\Q(S)\cdot A} = \spn{E_{ij}: a_{ij}\not=0}$, as desired.

For the normal space we have
\begin{align*}
\norm_{\Q(S)\cdot A} &=\{X : \tr(X^TY)=0 \text{ for all } Y\in \T_{\Q(S)\cdot A}\}\\
&= \{X : X\circ A = O\}.\qedhere
\end{align*}
\end{proof}

We note that if $P$ is an $m\times m$, positive definite matrix, then 
\[
\{ M\in \mathbb{R}^{m\times n}: MM^T=P\}
\]
is a smooth manifold \cite{Lee2003}. Hence one can talk about its tangent and normal space, and how it intersects the manifold of a given sign pattern transversally at a given matrix. Thus, similar results to Lemmas~\ref{TOmn} - \ref{NOmn} and Theorem~\ref{SIPPTran} hold. We do not need that level of generality in this study.

We now show how the transverse intersection of $\Orth(m,n)$ and $\Q(S)$ is related to the SIPP and how a (row) orthogonal matrix having the SIPP gives rise to many sign patterns that allow orthogonality. This will be used heavily in Section~\ref{SecFamilies}.

\begin{theorem}\label{SIPPTran}
Let $Q\in\Orth(m,n)$ have sign pattern $S= [s_{ij}]$. The manifolds $\Q(S)$ and $\Orth(m,n)$ intersect transversally at $Q$ if and only if $Q$ has the SIPP. Further, if $Q$ has the SIPP, then every super pattern of $S$ allows orthogonality.
\end{theorem}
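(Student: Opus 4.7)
The plan is to establish the equivalence by computing the intersection of the two normal spaces and exploiting the rank of $Q$, then to produce orthogonal matrices with the desired super pattern by applying Theorem~\ref{IFT} to a carefully chosen translation family.

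For the equivalence, I would recall from Lemma~\ref{NOmn} that $\norm_{\Orth(m,n)\cdot Q} = \{ZQ : Z \in \sym{m}\}$, and from Lemma~\ref{Qs} that $\norm_{\Q(S)\cdot Q} = \{X : X\circ Q = O\}$. Transverse intersection at $Q$ is equivalent to the normal spaces meeting only in $\{O\}$; that is, to the condition that every $Z\in\sym m$ satisfying $(ZQ)\circ Q = O$ must have $ZQ = O$. Since $Q \in \Orth(m,n)$ has linearly independent rows (hence full row rank), $ZQ = O$ forces $Z = O$. Combined, this is precisely the SIPP for $Q$, which finishes the first equivalence.

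For the super pattern assertion, fix a super pattern $S_{\vec R}$ of $S$ and define the perturbation direction
\[
R'_{ij} = \begin{cases} r_{ij} & \text{if } s_{ij} = 0,\\ 0 & \text{if } s_{ij} \neq 0.\end{cases}
\]
Take the constant smooth family $\M_1(s) = \Orth(m,n)$ and the translated family $\M_2(t) = \Q(S) + tR'$; since translation by $tR'$ is a diffeomorphism, this really is a smooth family of manifolds. By the first part of the theorem, $\M_1(0)$ and $\M_2(0)$ intersect transversally at $Q$, so Theorem~\ref{IFT} supplies a neighborhood $W$ of the origin and a continuous function $f : W \to \R^{m\times n}$ with $f(0,0) = Q$ and $f(\epsilon_1,\epsilon_2) \in \M_1(\epsilon_1) \cap \M_2(\epsilon_2)$.

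The final step is to inspect the sign pattern of $f(0,t)$ for small $t > 0$. Write $f(0,t) = A_t + t R'$ with $A_t \in \Q(S)$. On positions where $s_{ij} \neq 0$, the entry $R'_{ij}$ vanishes, and $A_t$ has sign $s_{ij}$; by continuity of $f$ at $(0,0)$, the $(i,j)$ entry of $f(0,t)$ stays near the nonzero value $q_{ij}$, so its sign remains $s_{ij} = (S_{\vec R})_{ij}$. On positions where $s_{ij} = 0$, the entry of $A_t$ is zero, so $f(0,t)_{ij} = t r_{ij}$, which has sign $r_{ij} = (S_{\vec R})_{ij}$ when $t > 0$. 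Thus $f(0,t) \in \Orth(m,n) \cap \Q(S_{\vec R})$, so $S_{\vec R}$ allows orthogonality.

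The technical hurdle I expect is bookkeeping in the last step: verifying that the perturbation $tR'$ changes only the zero positions of $A_t$ to the desired sign while leaving the other signs undisturbed, all uniformly in a small neighborhood of $t = 0$. Everything else is essentially a translation of the definitions through the IFT machinery.
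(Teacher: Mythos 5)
Your proof is correct and follows essentially the same route as the paper: the equivalence is obtained by intersecting the normal spaces from Lemmas~\ref{NOmn} and \ref{Qs} and using full row rank of $Q$, and the super pattern claim is obtained by applying Theorem~\ref{IFT} to the translated family $\Q(S)+tR'$, which is exactly the paper's $\M_R(t)$. The continuity appeal for the positions in the support of $S$ is harmless but unnecessary, since $f(0,t)-tR'\in\Q(S)$ already forces those signs to equal $s_{ij}$.
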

\begin{proof}
By Lemmas \ref{NOmn} and \ref{Qs},
\[
\norm_{\Orth(m,n)\cdot Q} \cap \norm_{\Q(S)\cdot Q} = \{XQ : X\in\sym m \text{ and } XQ\circ Q = O\}.
\]
Thus, $\norm_{\Orth(m,n)\cdot Q} \cap \norm_{\Q(S)\cdot Q} = \{O\}$ if and only if $Q$ has the SIPP.

Suppose that $Q$ has the SIPP and let $R = [r_{ij}]$ be any sign pattern. Define the smooth family of manifolds $\M_R(t)$ by
\begin{align*}
\M_R(t) = \{A = [a_{ij}]\in\R^{m\times n} :&\ \sgn(a_{ij}) = s_{ij} \text{ if } s_{ij}\not=0 \\
&\text{ and } a_{ij} = r_{ij}t \text{ if } s_{ij} = 0\}
\end{align*}
for $t\in(-1,1)$. Then the manifolds $\Orth(m,n)$ and $\M_R(0) = \Q(S)$ intersect transversally at $Q$. Thus, for $\epsilon>0$ sufficiently small, Theorem~\ref{IFT} guarantees a continuous function $f$ such that $\M_R(\epsilon)$ and $\Orth(m,n)$ intersect transversally at $f(\epsilon)$ and $f(0) = Q$. Since $f(\epsilon)\in \M_R(\epsilon)$ we know that for $\epsilon$ sufficiently small $f(\epsilon)$ has sign pattern $S_{\vec{R}}$. Further, $f(\epsilon)\in\Orth(m,n)$ implies $S_{\vec{R}}$ allows orthogonality.
\end{proof}

Theorem~\ref{SIPPTran} is most effective at producing new sign patterns allowing orthogonality when $Q$ is sparse. The following corollary gives a bound on the number of zero entries a matrix with the SIPP can have. The bound in Corollary~\ref{zeros} is sharp as can be seen in Example~\ref{biplane}.

\begin{corollary}\label{zeros}
Let $Q\in\Orth(m,n)$ have the SIPP. Then the number of zero entries in $Q$ is bounded above by $ nm - \frac12m(m+1)$.
\end{corollary}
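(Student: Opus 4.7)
The plan is to translate the SIPP hypothesis, via Theorem~\ref{SIPPTran}, into a transversality statement and then read off the bound by comparing dimensions of the two normal spaces inside $\R^{mn}$.

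First, let $S = \sgn(Q)$ and let $z$ denote the number of zero entries of $Q$. By Theorem~\ref{SIPPTran}, the SIPP of $Q$ is equivalent to $\Orth(m,n)$ and $\Q(S)$ intersecting transversally at $Q$; equivalently,
\[
\norm_{\Orth(m,n)\cdot Q}\cap \norm_{\Q(S)\cdot Q} = \{O\}.
\]
Hence $\dim\norm_{\Orth(m,n)\cdot Q} + \dim\norm_{\Q(S)\cdot Q} \le mn$, since the two subspaces sit in $\R^{mn}$ and meet trivially.

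Next I would compute each of the two normal-space dimensions. By Lemma~\ref{NOmn}, $\norm_{\Orth(m,n)\cdot Q} = \{ZQ : Z\in\sym m\}$, and because $Q$ has full row rank the map $Z\mapsto ZQ$ is injective on $\sym m$. Therefore
\[
\dim\norm_{\Orth(m,n)\cdot Q} = \dim\sym m = \binom{m+1}{2} = \tfrac{1}{2}m(m+1).
\]
By Lemma~\ref{Qs}, $\norm_{\Q(S)\cdot Q} = \{X\in\R^{m\times n} : X\circ Q = O\}$, which consists precisely of those matrices supported on the zero positions of $Q$. Hence $\dim\norm_{\Q(S)\cdot Q} = z$.

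Combining these, $\tfrac{1}{2}m(m+1) + z \le mn$, so $z \le nm - \tfrac{1}{2}m(m+1)$, as claimed. There is no real obstacle here; the content of the argument is all carried by the tangent/normal-space computations already proved in Lemmas~\ref{TOmn}--\ref{Qs} and the transversality reformulation of the SIPP in Theorem~\ref{SIPPTran}. The only thing to be a little careful about is noting that $Z\mapsto ZQ$ is injective on $\sym m$ (which uses the full-rank condition built into the SIPP), so that the dimension of the first normal space is exactly $\binom{m+1}{2}$ rather than possibly smaller.
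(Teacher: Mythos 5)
Your proof is correct and follows essentially the same route as the paper: invoke Theorem~\ref{SIPPTran} to recast the SIPP as triviality of $\norm_{\Orth(m,n)\cdot Q}\cap \norm_{\Q(S)\cdot Q}$, then compare dimensions of the two normal spaces in $\R^{mn}$ using Lemma~\ref{NOmn}, Lemma~\ref{Qs}, and the dimension count $\dim\norm_{\Orth(m,n)\cdot Q}=\tfrac12 m(m+1)$. The only cosmetic difference is that you spell out the injectivity of $Z\mapsto ZQ$ on $\sym m$, which the paper has already folded into the proof of Lemma~\ref{NOmn}.
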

\begin{proof}
Let $S$ be the sign pattern of $Q$ and $N_0(Q)$ denote the number of zero entries in $Q$. By (\ref{dim}), $\dim(\norm_{\Orth(m,n)\cdot Q}) = \frac12m(m+1)$. Further, by Theorem~\ref{SIPPTran}, the matrix $Q$ has the SIPP if and only if $\norm_{\Orth(m,n)\cdot Q} \cap \norm_{\Q(S)\cdot Q} = \{O\}$. We conclude that
\begin{align*}
N_0(Q) &= \dim(\norm_{\Q(S)\cdot Q})\\
&\leq nm - \dim(\norm_{\Orth(m,n)\cdot Q}) \\
&= nm - \frac12m(m+1). \qedhere
\end{align*}
\end{proof}

%
%
\section{Families of Sign Patterns and Consequences of the SIPP}\label{SecFamilies}

We now give some consequences of matrices having the SIPP and several examples of families of sign patterns which allow orthogonality. The first example follows immediately from Proposition~\ref{mnSIPP} and Theorem~\ref{SIPPTran}.

\begin{corollary}\label{mnSIPPsuper}
Let $Q\in \Orth(m,n)$ have sign pattern $S$ and let $R$ be an $m\times p$ sign pattern with $p > n$. If $Q$ has the SIPP then the $m\times p$ sign pattern $\left[\begin{array}{@{}c|c@{}}S & O\end{array}\right]_{\vec{R}}$ allows orthogonality.
\end{corollary}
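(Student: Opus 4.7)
The plan is to stitch together the two results the corollary explicitly advertises in its setup, namely Proposition~\ref{mnSIPP} (padding with zero columns preserves the SIPP) and Theorem~\ref{SIPPTran} (a matrix with the SIPP gives rise to super patterns allowing orthogonality). The only real work is to verify that the padded matrix is still row orthogonal and that its sign pattern is the one we want.

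First I would form the $m\times p$ matrix $Q' = \left[\begin{array}{@{}c|c@{}} Q & O\end{array}\right]$, where the block of zeros is $m\times(p-n)$. A direct computation gives $Q'(Q')^T = QQ^T + OO^T = I$, so $Q'\in\Orth(m,p)$. By construction, the sign pattern of $Q'$ is exactly $\left[\begin{array}{@{}c|c@{}} S & O\end{array}\right]$.

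Next I would invoke Proposition~\ref{mnSIPP}: since $Q$ has the SIPP and $p>n$, the padded matrix $Q'$ also has the SIPP. Now Theorem~\ref{SIPPTran} applies to $Q'\in\Orth(m,p)$, so every super pattern of its sign pattern $\left[\begin{array}{@{}c|c@{}} S & O\end{array}\right]$ allows orthogonality. Specialising to the super pattern taken in the direction of $R$ yields exactly the desired conclusion: $\left[\begin{array}{@{}c|c@{}} S & O\end{array}\right]_{\vec{R}}$ allows orthogonality.

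I do not anticipate any obstacle; this is a bookkeeping corollary that names the composition of the two tools already proved. If there is a subtlety at all, it is only in checking that the notion of ``super pattern in the direction of $R$'' is indeed captured by Theorem~\ref{SIPPTran} applied to $Q'$ (since Theorem~\ref{SIPPTran} was stated with an arbitrary sign pattern $R$ playing exactly this role in the definition of the smooth family $\M_R(t)$), which is immediate from the definitions.
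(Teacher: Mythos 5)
Your proof is correct and is exactly the route the paper intends: the corollary is stated as following immediately from Proposition~\ref{mnSIPP} and Theorem~\ref{SIPPTran}, and you supply the brief verification that $[\,Q \mid O\,]$ is row orthogonal with the right sign pattern before invoking both. Nothing to add.
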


Considering Corollary~\ref{mnSIPPsuper}, it is reasonable to ask if an $m\times n$ sign pattern allows orthogonality, then must it contain an $m\times m$ subpattern that allows orthogonality. As was pointed out in the final remark of \cite{JOHNSON1998}, the sign pattern
\[
\left[\begin{array}{rrrrr}
-1&1&-1&1&1\\
1&-1&1&-1&1\\
1&1&1&1&-1\\
1&1&1&1&1
\end{array}\right]
\]
allows orthogonality, but does not have a $4\times 4$ submatrix that allows orthogonality.

Our next example concerns orthogonal Hessenberg matrices. The $n\times n$ Hessenberg matrix
\[
H = 
\left[\begin{array}{ccccc}
1 & -1 & 0 & \cdots & 0\\
\vdots & \ddots & -2 & \ddots & \vdots\\
\vdots & & \ddots & \ddots & 0\\
\vdots & & & \ddots & -(n - 1)\\
1 & \cdots & \cdots & \cdots & 1
\end{array}\right].
\]
is row orthogonal. Hence for each $n\geq 2$ there is a least one orthogonal Hessenberg matrix. Remarkably, the proof that orthogonal Hessenberg matrices have the SIPP does not depend on the signs of its entries. This is not always the case.

\begin{corollary}\label{Hess}
Let $n\geq 2$ and $Q\in\Orth(n)$ have zero-nonzero pattern 
\[
R = 
\left[\begin{array}{rrrrr}
1 & 1 & 0 & \cdots & 0\\
\vdots & \ddots & \ddots & \ddots & \vdots\\
\vdots & & \ddots & \ddots & 0\\
\vdots & & & \ddots & 1\\
1 & \cdots & \cdots & \cdots & 1
\end{array}\right].
\]
Then every super pattern of $\sgn(Q)$ allows orthogonality.
\end{corollary}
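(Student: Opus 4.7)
The plan is to verify that $Q$ itself has the SIPP; once that is established, Theorem~\ref{SIPPTran} immediately implies that every super pattern of $\sgn(Q)$ allows orthogonality. Note that the SIPP condition depends only on the zero--nonzero pattern of $Q$ together with the orthogonality relation, matching the remark preceding the statement that the specific signs of the entries play no role.

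Suppose $X\in\sym n$ satisfies $(XQ)\circ Q=O$. Write $r_i$ for row $i$ of $X$ and $c_j$ for column $j$ of $Q$. The hypothesis is equivalent to $r_i\cdot c_j=0$ for every $(i,j)$ with $R_{ij}=1$, that is, whenever $j\leq i+1$. Since $Q\in\Orth(n)$, the columns $c_1,\ldots,c_n$ form an orthonormal basis of $\R^n$.

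I would prove $r_i=\bzero$ by descending induction on $i$ from $n$ to $1$. For $i=n$, the entire last row of $R$ consists of $1$s, so $r_n\cdot c_j=0$ for every $j$, which forces $r_n=\bzero$. For the inductive step, assume $r_{i+1}=\cdots=r_n=\bzero$; symmetry of $X$ then makes $r_i$ vanish in positions $i+1,\ldots,n$. Combined with $r_i\cdot c_j=0$ for $j\leq i+1$, this places $r_i^T$ in $\spn{c_{i+2},\ldots,c_n}$, so write $r_i^T=\sum_{k=i+2}^n\alpha_k c_k$. Setting the $p$-th coordinate of $r_i^T$ equal to $0$ for each $p=i+1,\ldots,n-1$ produces the system $\sum_{k=i+2}^n\alpha_k Q_{pk}=0$. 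Because $R$ is lower Hessenberg, $Q_{pk}=0$ whenever $k>p+1$ and $p<n$, so this is a square lower triangular system in the unknowns $\alpha_{i+2},\ldots,\alpha_n$ with diagonal entries $Q_{p,p+1}\neq 0$. Back substitution forces $\alpha_{i+2}=\cdots=\alpha_n=0$, and hence $r_i=\bzero$.

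With $X=O$ established, $Q$ has the SIPP and Theorem~\ref{SIPPTran} completes the proof. The only real content of the argument is pairing the symmetry of $X$ with the lower Hessenberg sparsity of $Q$ in the right inductive order; the key observation is that the resulting square system is triangular with nonzero diagonal entries, which is automatic from the prescribed pattern $R$.
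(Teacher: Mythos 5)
Your proof is correct, but it takes a genuinely different route from the paper's. The paper invokes Corollary~\ref{orthogSIPP}, which reduces the SIPP to a statement about an arbitrary (not necessarily symmetric) matrix $Y$ with $Y\circ Q=O$ and $YQ^T$ symmetric, and then argues in terms of the \emph{rows} of $Q$: since $X\circ Q=O$ forces $x_{ij}=0$ for $j\leq i+1$, the support of row $i$ of $X$ lies in $\{i+2,\ldots,n\}$ while the support of row $j$ of $Q$ lies in $\{1,\ldots,j+1\}$; these are disjoint whenever $j\leq i$, so row $i$ of $X$ is orthogonal to row $j$ of $Q$ in that range, and the symmetry of $XQ^T$ hands over the opposite range $j\geq i$ for free. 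Thus every row of $X$ is orthogonal to the entire orthonormal basis formed by the rows of $Q$, so $X=O$ in one stroke. You instead work directly with the definition (symmetric $X$, $(XQ)\circ Q=O$), pass to the \emph{columns} of $Q$, and run a descending induction on the rows of $X$, using the symmetry of $X$ to propagate zeros and a lower-triangular linear system coming from the Hessenberg structure to kill each $r_i$. Both are valid; the paper's argument is shorter and avoids the triangular-system bookkeeping, while yours is more explicitly computational and makes the role of the nonzero superdiagonal entries $Q_{p,p+1}$ transparent. One terminological nitpick: the triangular system you produce is lower triangular with unknowns eliminated top-down, so ``forward substitution'' rather than ``back substitution'' is the standard term, but the argument is unaffected.
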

\begin{proof}
Suppose that $X=[x_{ij}]$ is a matrix such that $X\circ Q = O$ and the dot product between the $i$-th row of $X$ and $j$-th row of $Q$ equals the dot product between the $j$-th row of $X$ and the $i$-th row of $Q$ for all $i$ and $j$. By Corollary~\ref{orthogSIPP}, it suffices to show that $X=O$.

Since $X\circ Q=O$,  $x_{ij} = 0$ whenever $j \leq i + 1$. Consequently, row $i$ of $X$ and row $j$ of $Q$ are orthogonal whenever $j\leq i$. By our assumptions, row $j$ of $X$ and row $i$ of $Q$ are orthogonal whenever $j\leq i$. Hence, each row of $X$ is orthogonal to every row of $Q$. Since the rows of $Q$ form an orthonormal basis of $\R^n$, we conclude that $X=O$. Therefore, $Q$ has the SIPP and by Theorem~\ref{SIPPTran} every super pattern of $S$ allows orthogonality.
\end{proof}

In \cite{CHEON2003} Givens rotations are used to find sign patterns that allow orthogonality. As we will see, the techniques used there are not sufficient to prove Corollary~\ref{Hess}. 

A \defi{Givens rotation} is an orthogonal matrix that fixes all but two of the coordinate axes and rotates the plane spanned by these two coordinate axes. For $i < j$ let $G_{ij}(\theta)$ denote the Givens rotation which rotates the $(i,j)$-plane counterclockwise by $\theta$ radians. 

Let $A\in\R^{m\times n}$ have columns $\bc_1,\ldots,\bc_n$ and rows $\br_1,\ldots,\br_m$. Postmultiplying $A$ by $G_{ij}(\theta)$ replaces column $\bc_i$ by $\cos(\theta)\bc_i + \sin(\theta)\bc_j$, replaces column $\bc_j$ by $\cos(\theta)\bc_j -\sin(\theta)\bc_i$ and does not affect the remaining entries of $A$. Similarly, premultiplying $A$ by $G_{ij}(\theta)$ replaces row $\br_i$ by $\cos(\theta)\br_i - \sin(\theta)\br_j$, replaces row $\br_j$ by $\cos(\theta)\br_j + \sin(\theta)\br_i$ and does not affect the remaining entries of $A$.

Let $Q\in\Orth(m,n)$ have sign pattern $S$. Let $R_{ij}$ denote the sign pattern obtained from $S$ by replacing row $i$ and row $j$ by the negative of row $j$ and row $i$ respectively. Let $C_{ij}$ denote the sign pattern obtained from $S$ by replacing column $i$ and column $j$ by column $j$ and the negative of column $i$ respectively. Then there exists a $\theta>0$ such that 
\begin{align*}
\sgn\big(G_{ij}(\theta) Q\big) &= S_{\vec{R}_{ij}}, & \sgn\big(G_{ij}(-\theta) Q\big) &= S_{-\vec{R}_{ij}},\\
\sgn\big(QG_{ij}(\theta)\big) &= S_{\vec{C}_{ij}}, & \sgn\big(QG_{ij}(-\theta)\big) &= S_{-\vec{C}_{ij}}.\\
\end{align*}

By Corollary~\ref{Hess} the sign pattern
\[
\left[\begin{array}{rrrrr}
1 & -1 & 0 & 1 & 1 \\
1 & 1 & -1 & 0 & 1 \\
1 & 1 & 1 & -1 & 1 \\
1 & 1 & 1 & 1 & -1 \\
1 & 1 & 1 & 1 & 1 \\
\end{array}\right]
\]
allows orthogonality. However, there is no way to obtain an orthogonal matrix from an orthogonal Hessenberg with this sign pattern using a sequence of products of Givens rotations as described above. In particular, having a 1 in the $(1,5)$ and $(1,6)$ entries would require a nonzero sign in the $(1,3)$ or $(2,4)$ entry.

We next show that a row orthogonal matrix with the SIPP cannot have a large zero submatrix. First, we give a result about row orthogonal matrices.

\begin{lemma}\label{zeroLemma}
Let $Q\in\Orth(m,n)$ have the form 
\[
\renewcommand*{\arraystretch}{1.4}
Q =
\left[\begin{array}{c|c}
A & O\\
\hline
B & C
\end{array}\right], 
\]
where $O$ is $p\times q$. Then $p + q \leq n - {\rm rank}(B)$. In particular, if $B$ is nonzero, then $p+q \leq n-1$.
\end{lemma}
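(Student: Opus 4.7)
The plan is to exploit the row orthonormality of $Q$ to pin down the row space of $B$ inside a small subspace of $\R^{n-q}$. First I would observe that the first $p$ rows of $Q$ have the form $[\mathbf{a}_i^T\mid \bzero^T]$ with $\mathbf{a}_i\in\R^{n-q}$, and since $QQ^T=I$ these $p$ rows are orthonormal. In particular the vectors $\mathbf{a}_1,\ldots,\mathbf{a}_p$ are orthonormal in $\R^{n-q}$, so $A$ has full row rank $p$ and its row space $U$ is a $p$-dimensional subspace of $\R^{n-q}$ (this already forces $p\le n-q$).

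Next I would look at the remaining $m-p$ rows, which have the form $[\mathbf{b}_i^T\mid \mathbf{c}_i^T]$. Again by $QQ^T=I$, each such row is orthogonal to every row $[\mathbf{a}_j^T\mid \bzero^T]$ above it. Since the $\bzero^T$ block kills any contribution from $\mathbf{c}_i$, this orthogonality reduces to $\mathbf{b}_i^T\mathbf{a}_j=0$ for all $i$ and all $j\le p$. Hence every row of $B$ lies in $U^{\perp}\subseteq \R^{n-q}$, a subspace of dimension $(n-q)-p$.

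Consequently the row space of $B$ has dimension at most $n-q-p$, so $\text{rank}(B)\le n-p-q$, which rearranges to $p+q\le n-\text{rank}(B)$, the desired bound. The ``In particular'' clause follows immediately: if $B\neq O$ then $\text{rank}(B)\ge 1$, yielding $p+q\le n-1$.

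There is no real obstacle here beyond keeping the block dimensions straight; the only point that requires any care is the step that converts row-orthonormality of $[A\mid O]$ into $\text{rank}(A)=p$ (so that $U$ really has the full dimension $p$), and the observation that the $O$ block on the top right is exactly what makes the cross term with $\mathbf{c}_i$ vanish and pushes the orthogonality constraint entirely onto $B$.
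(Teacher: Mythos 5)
Your proof is correct and follows essentially the same route as the paper: use row orthogonality of $Q$ to deduce that the row spaces of $A$ and $B$ are orthogonal subspaces of $\R^{n-q}$, then combine ${\rm rank}(A)=p$ (from orthonormality of the first $p$ rows) with ${\rm rank}(A)+{\rm rank}(B)\le n-q$ to get the bound. The only difference is presentational; the paper states the rank inequality directly while you spell it out via $U$ and $U^\perp$.
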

\begin{proof}
Since $Q$ is row orthogonal, the rows of $A$ are orthogonal to the rows of $B$. This, along with the observation that $\begin{bmatrix} A\\ \cmidrule(lr){1-1} B \end{bmatrix}$ is $m \times (n - q)$, gives
\[
\mbox{rank}(A) + \mbox{rank}(B) \leq n - q.
\]
Since the rows of $A$ are linearly independent, $\mbox{rank}(A) = p$. Thus,
\[
p + q \leq n - \mbox{rank}(B). \qedhere
\]
\end{proof}

Note that there exist $m\times n$ row orthogonal matrices with a $p\times q$ zero submatrix such that $p + q = n$. The next result shows such matrices do not have the SIPP.

\begin{proposition}\label{maxZeroBlock}
Let $Q\in\Orth(m,n)$ have a $p\times q$ zero submatrix. If $Q$ has the SIPP, then $p + q \leq n - 1$.
\end{proposition}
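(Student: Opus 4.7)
The plan is to argue by contradiction, showing that the extremal configuration $p + q = n$ is incompatible with the SIPP. After permuting rows and columns of $Q$ (an operation that preserves the SIPP by Lemma~\ref{sign equiv}), I may place the $p \times q$ zero block in the upper-right corner, obtaining
\[
Q = \begin{bmatrix} A & O \\ B & C \end{bmatrix},
\]
with $A$ of size $p \times (n-q)$ and $B$ of size $(m-p) \times (n-q)$. Lemma~\ref{zeroLemma} then yields $p + q \leq n - \text{rank}(B)$. If $B \neq O$, we already have $p + q \leq n - 1$ and the proof is finished.

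Thus the only case requiring work is $B = O$. The decisive observation is that in this case $Q$ decouples into two blocks whose rows have disjoint column supports,
\[
Q = \begin{bmatrix} A & O \\ O & C \end{bmatrix}.
\]
The first $p$ rows of $Q$ are supported within columns $1, \ldots, n-q$, while the last $m-p$ rows are supported within columns $n-q+1, \ldots, n$. Picking one row from each block gives two rows of $Q$ with disjoint supports, and Lemma~\ref{basics} then contradicts the assumption that $Q$ has the SIPP. Hence $B$ cannot vanish, and the bound $p + q \leq n - 1$ follows.

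The main obstacle is a modest bookkeeping issue: the disjoint-support argument needs both row blocks to be nonempty, i.e.\ $1 \leq p \leq m-1$. The lower bound is built into having a genuine zero submatrix, while the upper bound rules out the degenerate configuration $p = m$ in which the zero block would in fact be $q$ entirely zero columns of $Q$. That degenerate case is dispatched by first appealing to Proposition~\ref{mnSIPP} to strip the zero columns without affecting the SIPP, reducing to a smaller row orthogonal matrix on which the preceding argument can be rerun. Once this edge case is handled, the proof reduces to the clean combination of Lemma~\ref{zeroLemma} with Lemma~\ref{basics} described above.
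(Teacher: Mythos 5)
Your core argument is the paper's: after Lemma~\ref{sign equiv} lets you move the zero block to a corner, Lemma~\ref{zeroLemma} gives $p + q \le n - {\rm rank}(B)$, and if $p + q \ge n$ then $B = O$, $Q$ is block-diagonal, and Lemma~\ref{basics} kills the SIPP. That part is fine and matches the paper exactly (the choice of corner is cosmetic).

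What does not work is the patch for the degenerate case $p = m$. Deleting the $q$ zero columns via Proposition~\ref{mnSIPP} yields an $m \times (n-q)$ matrix $A$ with $A$ having the SIPP if and only if $Q$ does, but $A$ may contain no zero submatrix at all, so there is nothing to rerun; and even if it did, any bound obtained for $A$ would be relative to $n-q$ rather than $n$, so it would not translate back to the inequality you want. In fact Proposition~\ref{mnSIPP} shows the claimed bound actually fails when $p=m$: take $A\in\Orth(m)$ nowhere zero (hence SIPP) and set $Q = \left[\begin{array}{c|c}A & O\end{array}\right]\in\Orth(m,n)$; then $Q$ has the SIPP and an $m\times(n-m)$ zero block, giving $p+q=n$. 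So the proposition must be read with the tacit hypothesis $p<m$ -- exactly what the paper's proof silently uses when it passes to the block-diagonal form with two nonempty row-blocks and invokes Lemma~\ref{basics}. Under that hypothesis your main argument is already a complete proof; the attempted patch is both unnecessary and incorrect.
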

\begin{proof}
Suppose that $p + q \geq n$. By Lemma~\ref{zeroLemma}, $Q$ has the form
\[
\renewcommand*{\arraystretch}{1.4}
Q =
\left[\begin{array}{c|c}
A & O\\
\hline
O & B
\end{array}\right].
\]
Thus, by Lemma~\ref{basics}, $Q$ does not have the SIPP.
\end{proof}

The next two propositions show how to construct row orthogonal matrices with the SIPP and a $p\times q$ zero submatrix such that $p + q \leq n - 1$.

\begin{proposition} \label{OBlock}
Let $Q\in\Orth(m,n)$ have the form 
\[
\renewcommand*{\arraystretch}{1.4}
Q =
\left[\begin{array}{c|c}
A & O\\
\hline
B & C
\end{array}\right],
\]
where $B$ is nowhere zero and both $A$ and $C$ have the SIPP. Then $Q$ has the SIPP.
\end{proposition}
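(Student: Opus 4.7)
Say $A$ is $p\times k$, so that $O$ is $p\times(n-k)$, $B$ is $(m-p)\times k$, and $C$ is $(m-p)\times(n-k)$. The first observation is that because $QQ^T=I$ and the top $p$ rows of $Q$ have zero last coordinates, the rows of $A$ are orthonormal; in particular $A\in\Orth(p,k)$ and $A$ has linearly independent rows. Also, $Q$ has full rank as it lies in $\Orth(m,n)$, so the only thing to verify is the SIPP implication itself.

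The plan is to apply Definition~\ref{DefSIPP} directly. Let $X\in\sym m$ satisfy $(XQ)\circ Q=O$, and partition
\[
X=\left[\begin{array}{c|c} X_1 & X_2 \\ \hline X_2^T & X_3 \end{array}\right],
\qquad
X_1\in\sym p,\ X_3\in\sym{m-p},\ X_2\in\R^{p\times(m-p)}.
\]
Computing block-by-block,
\[
XQ=\left[\begin{array}{c|c} X_1A+X_2B & X_2C \\ \hline X_2^TA+X_3B & X_3C \end{array}\right],
\]
and taking the Hadamard product with $Q$ (whose top right block is zero) yields the three equations $(X_1A+X_2B)\circ A=O$, $(X_2^TA+X_3B)\circ B=O$, and $(X_3C)\circ C=O$.

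The last equation, together with the SIPP of $C$ and the symmetry of $X_3$, gives $X_3=O$. Substituting into the middle equation, and using that $B$ is nowhere zero (so Hadamard multiplication by $B$ is injective on the support of $B$, which is everything), we get $X_2^TA=O$. Here comes the step I expect to be the only mildly subtle one: since $A$ has linearly independent rows, each row of $X_2^T$ is a linear combination of the rows of $A$ summing to $0$, hence zero, giving $X_2=O$. Substituting $X_2=O$ into the first equation collapses it to $(X_1A)\circ A=O$, and the SIPP of $A$ together with symmetry of $X_1$ forces $X_1=O$. Thus $X=O$ and $Q$ has the SIPP.

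The whole argument is essentially a block triangulation of the SIPP condition: the zero block in the upper right of $Q$ decouples $X_3$ from the rest, letting one peel off $X_3$ first (via SIPP of $C$), then $X_2$ (via the nowhere-zero $B$ and the row-independence of $A$), then $X_1$ (via SIPP of $A$). The only place where the hypotheses of the proposition all have to come together is this second stage, which is why I flag it as the real content of the proof.
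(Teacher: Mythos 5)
Your proof is correct and follows essentially the same block-decoupling as the paper: peel off $X_3$ via the SIPP of $C$, then $X_2$ from $X_2^T A=O$, then $X_1$ via the SIPP of $A$. The only (cosmetic) difference is at the $X_2$ step, where the paper postmultiplies $X_2^T A=O$ by $A^T$ and uses $AA^T=I$, while you invoke linear independence of the rows of $A$ directly — equivalent, though your sentence ``each row of $X_2^T$ is a linear combination of the rows of $A$ summing to $0$'' would read more cleanly as ``each row of $X_2^T$ gives coefficients of a vanishing linear combination of the rows of $A$.''
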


\begin{proof}
Let $X$ be an $m \times m$ symmetric matrix  such that $(XQ) \circ Q=O$. Partition $X$ as 
\[
\renewcommand*{\arraystretch}{1.4}
X= 
\left[ \begin{array}{c|c} X_1 & X_2\\ \hline
X_2^T & X_3 \end{array} 
\right],
\]
where $X_1$ (resp. $X_3$) is a symmetric $p\times p$ (resp. $(m-p) \times (m-p)$) matrix. As $(XQ) \circ Q=O$ and $B$ is nowhere zero, we have
\begin{eqnarray}
(X_1A+X_2B) \circ A&=&O \label{circ1}\\ 
X_2^TA+X_3B &=&O \label{circ2}\\ 
(X_3C) \circ C &=&O. \label{circ3}
\end{eqnarray}
The fact that $C$ has the SIPP, the symmetry of $X_3$ and (\ref{circ3}) imply that
\begin{equation} \label{x3c}
X_3 = O.
\end{equation}
Thus, (\ref{circ2}) becomes $X_2^TA = O$ and since the rows of $A$ are orthogonal, postmultiplying by $A^T$ gives 
\begin{equation} \label{x2}
X_2^T=O.
\end{equation}
Now (\ref{circ1}) simplifies to $(X_1A) \circ A=O$. The symmetry of $X_1$ and the fact that $A$ has the SIPP imply 
\begin{equation}\label{x1}
X_1 = O.
\end{equation}
Equations (\ref{x3c})-(\ref{x1}) imply that  $X=O$. Therefore, $Q$ has the SIPP.
\end{proof} 

We can use Proposition~\ref{removeRow} to obtain the next result.

\begin{proposition}
Let $M = \begin{bmatrix}A\\ \cmidrule(lr){1-1} \ba^T \end{bmatrix}$ and $N = \left[\begin{array}{@{}c|c@{}}\bb & B\end{array}\right]$ be row orthogonal matrices, where $\ba$ and $\bb$ are nowhere zero vectors. Then
\[
\renewcommand*{\arraystretch}{1.4}
Q =
\left[\begin{array}{c|c}
A & O\\
\hline
\bb\ba^T & B
\end{array}\right]
\]
is row orthogonal. Further, $Q$ has the SIPP if and only if $M$ and $N$ have the SIPP.
\end{proposition}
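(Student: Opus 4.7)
The plan is to first verify row orthogonality of $Q$ by block multiplication. From $MM^T = I$ we extract $AA^T = I$, $A\ba = \bzero$, and $\ba^T\ba = 1$; from $NN^T = I$ we have $\bb\bb^T + BB^T = I$. The off-diagonal blocks of $QQ^T$ then vanish thanks to $A\ba = \bzero$, and the $(2,2)$-block telescopes as $\bb(\ba^T\ba)\bb^T + BB^T = \bb\bb^T + BB^T = I$. Full rank of $M$ and $N$ is automatic from row orthogonality.

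For the ``if'' direction (assuming $M$ and $N$ have the SIPP), I would work directly from Definition~\ref{DefSIPP}. Take a symmetric $X = \begin{bmatrix} X_1 & X_2 \\ X_2^T & X_3 \end{bmatrix}$ with $(XQ) \circ Q = O$ and expand by blocks. Because $\bb\ba^T$ is nowhere zero, the $(2,1)$-block Hadamard condition collapses to the linear identity $X_2^T A + X_3\bb\ba^T = O$. Postmultiplying by $\ba$ and using $A\ba = \bzero$, $\ba^T\ba = 1$ produces $X_3\bb = \bzero$, equivalently $(X_3\bb) \circ \bb = \bzero$. Together with $(X_3 B) \circ B = O$ from the $(2,2)$-block, this packages as $(X_3 N) \circ N = O$, so the SIPP of $N$ forces $X_3 = O$. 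Then $X_2^T A = O$ combined with $AA^T = I$ gives $X_2 = O$, and the surviving condition $(X_1 A) \circ A = O$ lifts to $M$ by bordering $X_1$ with a zero row and column, so the SIPP of $M$ yields $X_1 = O$.

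For the ``only if'' direction, assume $Q$ has the SIPP. To show $M$ has the SIPP I would chain earlier results: Proposition~\ref{removeRow}(i) applied repeatedly to peel off the rows of $Q$ lying below $A$ shows that $[A \mid O]$ has the SIPP; Proposition~\ref{mnSIPP} then gives $A$ the SIPP; and Proposition~\ref{removeRow}(ii) (using that the rows of $M$ are independent by row orthogonality and that $\ba$ is nowhere zero) promotes this to the SIPP of $M$. For $N$ the column structure calls for a direct argument: given symmetric $\hat{Y}$ with $(\hat{Y}N) \circ N = O$, the nowhere-zeroness of $\bb$ forces $\hat{Y}\bb = \bzero$; embedding $\hat{Y}$ in the bottom-right block of an appropriately sized symmetric matrix $Y'$ then makes the otherwise problematic term $\hat{Y}\bb\ba^T$ vanish, so $(Y'Q) \circ Q = O$, and the SIPP of $Q$ finishes the job.

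The hardest step is the ``if'' direction, and specifically the need to invoke the identity $A\ba = \bzero$ (hidden inside row orthogonality of $M$, not visible in the Hadamard data) to deduce $X_3\bb = \bzero$. Without this extra relation the $(2,2)$-block alone would give only $(X_3 B) \circ B = O$, which does not force $X_3 = O$ in general: the SIPP of $N$ genuinely requires Hadamard vanishing against $\bb$ in addition to $B$. This structural link between the top and bottom halves of $Q$ supplied by $A\ba = \bzero$ is what makes the SIPP transfer possible.
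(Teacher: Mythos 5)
Your proof is correct and is essentially the argument the paper gives: verify row orthogonality by block multiplication, and for both directions of the SIPP equivalence work with a $2\times 2$ block partition of the symmetric test matrix, exploiting the nowhere-zero off-diagonal block $\bb\ba^T$ together with the hidden relations $A\ba = \bzero$, $\ba^T\ba = 1$ to force the cross block to vanish and to package the bottom block as a SIPP instance for $N$. The only cosmetic divergence is in the ``only if'' direction: the paper kills $X_1$ and $X_2$ in one step by plugging the block-diagonal matrix $X_1\oplus X_2$ into $Q$ and invoking $Q$'s SIPP, which then yields both that $A$ and $N$ have the SIPP, finishing with Proposition~\ref{removeRow}(ii) to promote $A$'s SIPP to $M$'s. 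You instead derive $A$'s SIPP via a longer chain (Proposition~\ref{removeRow}(i) repeatedly, then Proposition~\ref{mnSIPP}); this works, though it quietly depends on the dimension hypotheses of Proposition~\ref{removeRow} holding at each peeling step, whereas the paper's one-shot block argument sidesteps that bookkeeping entirely. Your identification of $A\ba = \bzero$ as the structural pivot making the transfer possible is exactly right.
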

\begin{proof}
The matrix $Q$ is row orthogonal since
\[
\renewcommand*{\arraystretch}{1.4}
QQ^T = 
\left[\begin{array}{c|c}
AA^T & A\ba\bb^T\\
\hline
\bb\ba^TA^T & \bb\ba^T\ba\bb^T + BB^T
\end{array}\right]
=
\left[\begin{array}{c|c}
I & O\\
\hline
O & \bb\bb^T + BB^T
\end{array}\right]
= I.
\]
Assume that $M$ and $N$ have the SIPP. Suppose that
\[
\renewcommand*{\arraystretch}{1.4}
\left(
\left[\begin{array}{c|c}
X & Y\\
\hline
Y^T & Z
\end{array}\right]
\left[\begin{array}{c|c}
A &O\\
\hline
\bb\ba^T & B
\end{array}\right]
\right)
\circ
Q = O,
\]
where $X$ and $Z$ are symmetric. Then
\begin{align}
(XA + Y\bb\ba^T)\circ A &= O, \label{OB1}\\
Y^TA &= -Z\bb\ba^T, \text{ and} \label{OB2}\\
(ZB)\circ B &= O. \label{OB3}
\end{align}
Post-multiplying (\ref{OB2}) by $\ba$ gives $Z\bb = \bzero$. This, (\ref{OB2}) and the linear independence of the rows of $A$ imply that $Y = O$. By Proposition~\ref{removeRow}, $A$ has the SIPP. Hence~(\ref{OB1}) implies that $X = O$. Note $Z\left[\begin{array}{@{}c|c@{}}\bb & B\end{array}\right] = \left[\begin{array}{@{}c|c@{}}\bzero & BZ\end{array}\right]$. So $(Z\left[\begin{array}{@{}c|c@{}}\bb & B\end{array}\right])\circ \left[\begin{array}{@{}c|c@{}}\bb & B\end{array}\right]) = O$. Since $\left[\begin{array}{@{}c|c@{}}\bb & B\end{array}\right]$ has the SIPP, $Z = O$. Thus, we have shown $Q$ has the SIPP.

Now assume that $Q$ has the SIPP. Let $X_1$ and $X_2$ by symmetric matrices satisfying $(X_1 A)\circ A = O$ and $(X_2 N)\circ N = O$. Note $X_2\bb = \bzero$ and $X_2 B = O$. Thus,
\[
\renewcommand*{\arraystretch}{1.4}
\left(
\left[\begin{array}{c|c}
X_1 & O\\
\hline
O & X_2
\end{array}\right]
\left[\begin{array}{c|c}
A &O\\
\hline
\bb\ba^T & B
\end{array}\right]
\right)
\circ
Q = O.
\]
Since $Q$ has the SIPP, $X_1 = O$ and $X_2 = O$. Therefore, $N$ and $A$ have the SIPP. By Proposition~\ref{removeRow}, $M$ has the SIPP.
\end{proof}

For us, a \defi{hollow} matrix is a square matrix with zeros along the diagonal and nonzero entries off the diagonal. A \defi{signature matrix} is a diagonal matrix each of whose diagonal entries are either 1 or $-1$. Two matrices $A$ and $B$ are \defi{signature equivalent} if $A = D_1 B D_2$, where $D_1$ and $D_2$ are signature matrices.

\begin{theorem}\label{hollow}
Let $Q = [q_{ij}]\in\Orth(n)$ be hollow. Then $Q$ has the SIPP if and only if $Q$ is not signature equivalent to a symmetric hollow matrix.
\end{theorem}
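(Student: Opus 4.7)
The plan is to reformulate the SIPP using Corollary~\ref{orthogSIPP}: $Q$ has the SIPP if and only if the zero matrix is the only $Y\in\R^{n\times n}$ satisfying both $Y\circ Q = O$ and $YQ^T$ symmetric. Since $Q$ is hollow, the condition $Y\circ Q = O$ forces every off-diagonal entry of $Y$ to vanish, so $Y$ must itself be a diagonal matrix $D = \diag(d_1,\ldots,d_n)$. The symmetry of $DQ^T$ then unpacks as the scalar system
\[
d_i q_{ji} = d_j q_{ij} \qquad \text{for all } i\neq j.
\]
So the theorem reduces to showing that a nonzero diagonal $D$ solving this system exists if and only if $Q$ is signature equivalent to a symmetric hollow matrix.

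For the easy direction, suppose $Q = D_1 S D_2$ with $D_1, D_2$ signature matrices and $S$ symmetric hollow. Setting $D = D_1 D_2$, a direct calculation gives $DQ^T = D_1 D_2\cdot D_2 S D_1 = D_1 S D_1$, which is manifestly symmetric, and $D\circ Q = O$ because $D$ is diagonal while $Q$ is hollow. Thus $Y = D\neq O$ certifies that $Q$ lacks the SIPP.

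For the converse, assume $Q$ does not have the SIPP and let $D\neq O$ satisfy the scalar system above. First, every $d_i$ is nonzero: if $d_k = 0$ for some $k$, then for each $j\neq k$ the relation $d_j q_{kj} = d_k q_{jk} = 0$ combined with $q_{kj}\neq 0$ (hollowness) forces $d_j = 0$, contradicting $D\neq O$. The crucial step is to show that all $|d_i|$ are equal. Let $k$ be an index at which $|d_i|$ is maximal. Squaring the scalar relation yields $|d_k|^2 q_{jk}^2 = |d_j|^2 q_{kj}^2$, so $q_{jk}^2 \leq q_{kj}^2$ for every $j\neq k$. Summing over $j\neq k$ and using that both the $k$-th row and the $k$-th column of the orthogonal $Q$ are unit vectors (with $q_{kk}=0$) gives
\[
1 = \sum_{j\neq k} q_{jk}^2 \;\leq\; \sum_{j\neq k} q_{kj}^2 = 1,
\]
forcing equality termwise and thus $|d_j| = |d_k|$ for every $j$. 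After rescaling, $D$ is a signature matrix. The relation $DQ^T = QD$ together with $D^2 = I$ gives $(QD)^T = DQ^T = QD$, so $S := QD$ is symmetric, and it is hollow because $(QD)_{ii} = q_{ii}d_i = 0$. Writing $Q = SD = I\cdot S\cdot D$ exhibits $Q$ as signature equivalent to the symmetric hollow matrix $S$, completing the proof.

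The main obstacle is the step showing all $|d_i|$ coincide; it genuinely uses both the row and the column unit-norm constraints of $Q$, as either alone would leave the inequality $q_{jk}^2 \leq q_{kj}^2$ free to be strict. An alternative route is to apply the Perron--Frobenius theorem to the doubly stochastic, irreducible matrix obtained by squaring $|Q|$ entrywise, but the direct extremal argument above is more elementary and self-contained.
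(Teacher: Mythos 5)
Your proof is correct and follows essentially the same route as the paper's: reduce via Theorem~\ref{equivSIPP}/Corollary~\ref{orthogSIPP} to a nonzero diagonal $D$ with $DQ^T$ symmetric, pick the index of maximal $|d_i|$, and use the fact that both the $k$-th row and $k$-th column of $Q$ are unit vectors to force termwise equality and conclude all $|d_i|$ coincide. The only cosmetic differences are that you verify $d_i\neq 0$ as a separate preliminary step (the paper gets it for free from the termwise-equality argument) and you handle the easy direction directly rather than reducing to the symmetric-hollow case via Lemma~\ref{sign equiv}.
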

\begin{proof}
We begin by proving that if $Q$ is signature equivalent to a symmetric hollow matrix then $Q$ does not have the SIPP. By Lemma~\ref{sign equiv} it suffices to assume $Q$ is a symmetric hollow matrix. Then $IQ^T$ is symmetric and $I\circ Q = O$. Thus, $Q$ does not have the SIPP.

Conversely, assume $Q$ does not have the SIPP. Then there exists a nonzero $X\in\R^{n\times n}$ such that $X\circ Q = O$ and $XQ^T$ is symmetric. Since $X$ is nonzero and $X\circ Q = O$ we know $X = \text{diag}(x_1,\ldots,x_n)$ with some $x_j\not=0$. Then some $x_k\not=0$ has the largest magnitude amongst $x_1,\ldots,x_n$. Since $XQ^T$ is symmetric $q_{ik} = \frac{x_i}{x_k}q_{ki}$ for $i=1,\ldots,n$.  Further, since $Q$ is orthogonal
\[
1 = \sum_{i=1}^n q_{ki}^2 \geq \sum_{i=1}^n\left(q_{ki}\frac{x_i}{x_k}\right)^2 = \sum_{i=1}^n q_{ik}^2 = 1.
\]
Thus, $x_1,\ldots,x_n$ must have the same magnitude, say $c\not=0$. It follows that $X = cD$ where $D$ is a signature matrix. Since $XQ^T$ is symmetric, $(QD)^T$ is symmetric. Thus, $Q$ is signature equivalent to a symmetric hollow matrix.
\end{proof}

Hollow orthogonal matrices can be used to demonstrate that a matrix having the SIPP depends on more than just its sign pattern.

\begin{example} \label{OMZD}
Consider the symmetric hollow, orthogonal matrix
\[
A =\frac1{\sqrt{5}}
\left[\begin{array}{rrrrrr}
0 & 1 & 1 & 1 & 1 & 1\\
1 & 0 & 1 & -1 & -1 & 1\\
1 & 1 & 0 & 1 & -1 & -1\\
1 & -1 & 1 & 0 & 1 & -1\\
1 & -1 & -1 & 1 & 0 & 1\\
1 & 1 & -1 & -1 & 1 & 0
\end{array}\right],
\]
and the hollow orthogonal matrix
\[
B = G_{12}(\pi/6) A G_{12}(\pi/6).
\]
By Theorem~\ref{hollow} $A$ does not have the SIPP. It is not difficult to verify that $B$ is a hollow orthogonal matrix with the same sign pattern as $A$. Further, one can see that $B$ is not signature equivalent to a symmetric matrix and therefore has the SIPP.
\end{example}

By Theorem~\ref{SIPPTran} we have the following corollary.

\begin{corollary}\label{SupHollow}
 Let $Q\in\Orth(n)$ have sign pattern $S$. If $Q$ is not signature equivalent to a symmetric hollow matrix, then every super pattern of $S$ allows orthogonality.
\end{corollary}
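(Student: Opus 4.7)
The plan is to derive this corollary as an immediate consequence of the two most recent results in the section. The corollary is clearly meant as a hollow-matrix statement: if $Q$ is not hollow, then the phrase ``signature equivalent to a symmetric hollow matrix'' never applies (signature equivalence preserves the diagonal being zero, since the $(i,i)$-entry of $D_1 Q D_2$ is $d_{1,ii} q_{ii} d_{2,ii}$), so Theorem~\ref{hollow} gives us no information. I will therefore read the hypothesis as including that $Q$ is hollow, matching the context inherited from Theorem~\ref{hollow}.

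Under that reading, the proof is a one-step composition. First, apply Theorem~\ref{hollow}: since $Q \in \Orth(n)$ is hollow and is not signature equivalent to a symmetric hollow matrix, $Q$ has the SIPP. Second, invoke Theorem~\ref{SIPPTran}, which asserts that whenever a row orthogonal matrix has the SIPP, every super pattern of its sign pattern allows orthogonality. Applied to $Q$ with sign pattern $S$, this yields the conclusion.

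There is no genuine obstacle here; the work was all absorbed in proving the preceding theorems. The only minor point of care is recognizing that the hypothesis on $Q$ is precisely the characterization of the SIPP provided by Theorem~\ref{hollow} for hollow orthogonal matrices, so that the SIPP hypothesis of Theorem~\ref{SIPPTran} is satisfied without any further argument.
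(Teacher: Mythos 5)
Your proof is correct and is exactly the argument the paper intends: the paper introduces the corollary with the phrase ``By Theorem~\ref{SIPPTran},'' and the implicit step you supply (invoking Theorem~\ref{hollow} to establish the SIPP for $Q$) is the only missing link. Your remark about the hollowness hypothesis is also well taken: signature equivalence preserves the zero--nonzero pattern, so a non-hollow $Q$ (e.g., $Q = I$) vacuously satisfies ``not signature equivalent to a symmetric hollow matrix'' yet fails the conclusion (the all-ones pattern is a super pattern of $\sgn(I)$ that does not allow orthogonality), so the corollary does indeed inherit the hollowness hypothesis from Theorem~\ref{hollow}.
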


Recently hollow orthogonal matrices were used in \cite{BAILEY2018} to study the minimum number of distinct eigenvalues of certain families of symmetric matrices. In doing so they used the following construction.

\begin{lemma}[Bailey \textit{et al.}~\cite{BAILEY2018}]\label{CraigenConstruction}
Let
\[
\renewcommand*{\arraystretch}{1.4}
M = 
\left[\begin{array}{c|c}
A & \bv\\
\hline
\bu^T & 0
\end{array}\right] \text{ and }
N =
\left[\begin{array}{c|c}
0 & \bx^T\\
\hline
\by & B
\end{array}\right]
\]
be hollow orthogonal matrices of order $m+1$ and $n+1$ respectively. Then
\[
\renewcommand*{\arraystretch}{1.4}
Q =
\left[\begin{array}{c|c}
A & \bv\bx^T\\
\hline
\by\bu^T & B
\end{array}\right]
\]
is a hollow orthogonal matrix of order $m + n$.
\end{lemma}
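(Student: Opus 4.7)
The proof is a direct block-matrix computation, so my plan is to unpack the orthogonality of $M$ and $N$ into a small collection of identities, and then compute $QQ^T$ block by block.

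First I would observe that since $M$ is hollow and of order $m+1$, the diagonal entries of $M$ all vanish; in particular the upper-left $m\times m$ block $A$ inherits zeros on its diagonal. Likewise for $N$, the lower-right $n\times n$ block $B$ has zero diagonal. The diagonal of $Q$ is precisely the concatenation of the diagonals of $A$ and $B$, so $Q$ is hollow. This step is bookkeeping.

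Next, from $MM^T = I_{m+1}$ and $M^TM = I_{m+1}$ I would read off the identities
\[
AA^T + \bv\bv^T = I_m, \quad A\bu = \bzero, \quad \bu^T\bu = 1, \quad \bv^T\bv = 1,
\]
and analogously from $NN^T = I_{n+1}$ and $N^TN = I_{n+1}$ the identities
\[
BB^T + \by\by^T = I_n, \quad B\bx = \bzero, \quad \bx^T\bx = 1, \quad \by^T\by = 1.
\]
All eight facts drop out immediately by multiplying the $2\times 2$ block forms of $M$ and $N$.

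Finally I would compute
\[
QQ^T \;=\;
\left[\begin{array}{c|c} A & \bv\bx^T \\ \hline \by\bu^T & B \end{array}\right]
\left[\begin{array}{c|c} A^T & \bu\by^T \\ \hline \bx\bv^T & B^T \end{array}\right]
\]
block by block. Using $\bx^T\bx = 1$ the $(1,1)$-block is $AA^T + \bv\bv^T = I_m$. Using $\bu^T\bu = 1$ the $(2,2)$-block is $BB^T + \by\by^T = I_n$. The $(1,2)$-block is $A\bu\by^T + \bv\bx^T B^T$, which vanishes since $A\bu=\bzero$ and $B\bx=\bzero$; the $(2,1)$-block is its transpose and vanishes for the same reason. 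Hence $QQ^T = I_{m+n}$, and combined with hollowness this proves the claim.

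There is no genuine obstacle here—the only thing to be careful about is keeping track of which side of the orthogonality of $M$ and $N$ gives which identity (one needs both $MM^T=I$ and $M^TM=I$ to extract $\bu^T\bu = 1$ and $\bv^T\bv = 1$ alongside $A\bu=\bzero$ and $A^T\bv=\bzero$), and making sure the rank-one blocks $\bv\bx^T$ and $\by\bu^T$ are paired with the correct unit-norm factors when cross-multiplying.
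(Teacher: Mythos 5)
Your block computation of $QQ^T$ is correct and is precisely the computation the paper uses for its generalization, Proposition~\ref{CraigenGen}; the paper states the present lemma by citation and does not reprove it separately. Two small points are worth flagging.

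First, a genuine (though minor) gap in the hollowness step. In this paper a hollow matrix is required to have \emph{nonzero} entries everywhere off the diagonal, not merely zeros on the diagonal. So after observing that $\diag(A)=\bzero$ and $\diag(B)=\bzero$ you must also check that every off-diagonal entry of $Q$ is nonzero: the off-diagonal entries of $A$ and $B$ are nonzero because $M$ and $N$ are hollow, and since $\bu$, $\bv$, $\bx$, $\by$ are off-diagonal pieces of hollow matrices they are nowhere zero, making the rank-one blocks $\bv\bx^T$ and $\by\bu^T$ nowhere zero. Without this sentence the claim that ``$Q$ is hollow'' is not fully justified under the paper's definition.

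Second, an inefficiency rather than an error. You invoke both $MM^T=I$ and $M^TM=I$ (and both for $N$) and extract $\bv^T\bv=1$ and $\by^T\by=1$, but your $QQ^T$ computation never uses these; the four facts you actually need, namely $AA^T+\bv\bv^T=I$, $A\bu=\bzero$, $\bu^T\bu=1$ from $MM^T=I$ and $BB^T+\by\by^T=I$, $B\bx=\bzero$, $\bx^T\bx=1$ from $NN^T=I$, all come from the one-sided orthogonality $MM^T=I$, $NN^T=I$. This matters because the paper's Proposition~\ref{CraigenGen} runs the identical computation for rectangular row orthogonal $M$ and $N$, where $M^TM=I$ fails and must not be assumed.
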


Proposition~\ref{CraigenGen} is useful for verifying that matrices constructed using Lemma~\ref{CraigenConstruction} have the SIPP.

\begin{proposition}\label{CraigenGen}
Let
\[
\renewcommand*{\arraystretch}{1.4}
M = 
\left[\begin{array}{c|c}
A & \bv\\
\hline
\bu^T & 0
\end{array}\right] \text{ and }
N =
\left[\begin{array}{c|c}
0 & \bx^T\\
\hline
\by & B
\end{array}\right]
\]
be row orthogonal matrices with $\bx,\by,\bu$ and $\bv$ nowhere zero. Then
\[
\renewcommand*{\arraystretch}{1.4}
Q =
\left[\begin{array}{c|c}
A & \bv\bx^T\\
\hline
\by\bu^T & B
\end{array}\right]
\]
is row orthogonal. Moreover, if both $M$ and $N$ have the SIPP, then $Q$ has the SIPP.
\end{proposition}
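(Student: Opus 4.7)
The plan is to first verify row orthogonality by a direct block computation, and then establish the SIPP by combining the structural consequences of the off-diagonal Hadamard equations with the SIPP of $M$ and $N$ applied to two carefully chosen auxiliary matrices.

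Row orthogonality is mostly bookkeeping. From $MM^T = I$ and $NN^T = I$ one reads off
\[
AA^T + \bv\bv^T = I, \quad A\bu = \bzero, \quad \bu^T\bu = 1,
\]
\[
\bx^T\bx = 1, \quad B\bx = \bzero, \quad \by\by^T + BB^T = I.
\]
Substituting these into the $2\times 2$ block expansion of $QQ^T$ produces $I$ in each diagonal block and $O$ off-diagonal.

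For the SIPP, let $X$ be a symmetric matrix with $(XQ)\circ Q = O$ and partition
\[
X = \begin{bmatrix} X_1 & X_2 \\ X_2^T & X_3 \end{bmatrix}
\]
conformally with $Q$. The key initial observation is that the off-diagonal blocks of $Q$ are $\bv\bx^T$ and $\by\bu^T$, both of which are nowhere zero; hence the $(1,2)$- and $(2,1)$-block Hadamard equations sharpen to the honest matrix equations $X_1\bv\bx^T + X_2 B = O$ and $X_2^T A + X_3\by\bu^T = O$. Right-multiplying the first equation by $\bx$ and using $\bx^T\bx = 1$ and $B\bx = \bzero$ yields $X_1\bv = \bzero$, and back-substitution then gives $X_2 B = O$. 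The same trick applied with $\bu$ to the second equation gives $X_3\by = \bzero$ and $X_2^T A = O$.

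The crux is to introduce the symmetric matrices
\[
Y_M = \begin{bmatrix} X_1 & X_2\by \\ \by^T X_2^T & \by^T X_3\by \end{bmatrix}
\quad\text{and}\quad
Y_N = \begin{bmatrix} 0 & \bv^T X_2 \\ X_2^T\bv & X_3 \end{bmatrix}.
\]
A direct block multiplication shows that $(Y_M M)\circ M$ and $(Y_N N)\circ N$ each collapse, using the four identities just derived together with the $(1,1)$- and $(2,2)$-block Hadamard equations from $(XQ)\circ Q$, to the zero matrix. The SIPP of $M$ and $N$ then forces $Y_M = O$ and $Y_N = O$, so $X_1 = O$, $X_3 = O$, $X_2\by = \bzero$, and $X_2^T\bv = \bzero$. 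Finally, $X_2 B = O$ and $X_2\by = \bzero$ combine into $X_2\left[\begin{array}{@{}c|c@{}}\by & B\end{array}\right] = O$, and since the rows of $\left[\begin{array}{@{}c|c@{}}\by & B\end{array}\right]$ are linearly independent rows of the row orthogonal $N$, this forces $X_2 = O$, whence $X = O$. The main obstacle I anticipate is guessing $Y_M$ and $Y_N$: the augmenting entries $X_2\by$ and $X_2^T\bv$ must be chosen precisely so that the surviving blocks of $(Y_M M)\circ M$ and $(Y_N N)\circ N$ match exactly the $(1,1)$- and $(2,2)$-block Hadamard equations that remain from $(XQ)\circ Q$.
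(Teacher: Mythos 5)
Your proof is correct and follows essentially the same strategy as the paper: derive the block Hadamard equations from $(XQ)\circ Q=O$, clear the nowhere-zero off-diagonal blocks by postmultiplying by $\bx$ and $\bu$ to obtain $X_1\bv=\bzero$, $X_2B=O$, $X_3\by=\bzero$, $X_2^TA=O$, and then build symmetric bordered matrices to which the SIPP of $M$ and $N$ can be applied, finishing with the linear independence of the rows of $\left[\begin{array}{@{}c|c@{}}\by & B\end{array}\right]$. The only cosmetic difference is the arrangement of the auxiliary matrices: the paper implicitly permutes $M$ so that the hollow entry comes first and places $0$ in that corner, while you conform directly to $M$'s given block structure and place $\by^T X_3\by$ (which vanishes by the derived identities) in the $(2,2)$ corner; both reduce to the same computation.
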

\begin{proof}
The matrix $Q$ is row orthogonal since
\begingroup
\renewcommand*{\arraystretch}{1.4}
\begin{align*}
QQ^T &=
\left[\begin{array}{c|c}
AA^T + \bv\bx^T\bx\bv^T & A\bu\by^T + \bv\bx^TB^T\\
\hline
\by\bu^TA^T + B\bx\bv^T & \by\bu^T\bu\by^T + BB^T
\end{array}\right]\\
&=
\left[\begin{array}{c|c}
AA^T + \bv\bv^T & \bzero\by^T + \bv\bzero^T\\
\hline
\by\bzero^T + \bzero\bv^T & \by\by^T + BB^T
\end{array}\right]\\
&=I.
\end{align*}
\endgroup
Assume $M$ and $N$ have the SIPP. Let
\[
\renewcommand*{\arraystretch}{1.4}
X =
\left[\begin{array}{c|c}
X_1 & X_2\\
\hline
X_2^T & X_3
\end{array}\right]
\]
be a symmetric matrix such that $XQ\circ Q = O$. Since $XQ\circ Q = O$, and $\bx,\by,\bu$ and $\bv$ are nowhere zero
\begin{align}
(X_1 A + X_2\by\bu^T)\circ A &= O, \label{cg1}\\
X_1\bv\bx^T + X_2B &= O, \label{cg2}\\
X_2^T A + X_3\by\bu^T &=O, \label{cg3}\\
(X_2^T\bv\bx^T + X_3 B)\circ B &= O. \label{cg4}
\end{align}
Postmultiply~(\ref{cg2}) by $\bx$ to get 
\begin{equation}\label{cg5}
X_1\bv = \bzero \text{ and } X_2B = O.
\end{equation}
Similarly, postmultipying~(\ref{cg3}) by $\bu$  yields 
\begin{equation}\label{cg6}
X_3\by = \bzero \text{ and } X_2^T A = O.
\end{equation}
Equations~(\ref{cg1}), (\ref{cg5}) and (\ref{cg6}) imply
\begin{align*}
\renewcommand*{\arraystretch}{1.4}
\left(
\left[\begin{array}{c|c}
0 & \by^T X_2^T\\
\hline
X_2\by & X_1
\end{array}\right]
M
\right)
\circ M
&=
\renewcommand*{\arraystretch}{1.4}
\left[\begin{array}{c|c}
\by^T X_2^T \bv & \by^TX_2^TA\\
\hline
X_1\bv & X_2\by\bu^T + X_1 A
\end{array}\right]
\circ M\\
&=
\renewcommand*{\arraystretch}{1.4}
\left[\begin{array}{c|c}
 0 & \bzero^T\\
\hline
\bzero & (X_2\by\bu^T + X_1 A)\circ A
\end{array}\right]
\\
&= O.
\end{align*}
Since $M$ has the SIPP, $X_1 = O$ and $X_2\by = \bzero$. Similarly, equations~(\ref{cg4}), (\ref{cg5}) and (\ref{cg6}) imply $X_3 = O$. Having established $X_2\by = \bzero$, (\ref{cg5}) implies $X_2 \left[\begin{array}{@{}c|c@{}}\by & B\end{array}\right] = O$. Since the rows of $\left[\begin{array}{@{}c|c@{}}\by & B\end{array}\right]$ are linearly independent, $X_2 = O$. Therefore, $X = O$ and so $Q$ has the SIPP.
\end{proof}

The following lemma is a slight modification of Theorem 2.3 in \cite{BAILEY2018}.

\begin{lemma}\label{HollowSigEquiv}
There exists a hollow orthogonal matrix of order $n$ that is not signature equivalent to a symmetric matrix if and only if $n\notin\{1,2,3\}$.
\end{lemma}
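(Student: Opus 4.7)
I would prove the biconditional in two parts. For the forward direction, a short case analysis disposes of $n \in \{1,2,3\}$. The only hollow $1 \times 1$ matrix is $[0]$, which is not orthogonal. A hollow $3 \times 3$ matrix has the form $\bigl[\begin{smallmatrix}0 & a & b\\ c & 0 & d\\ e & f & 0\end{smallmatrix}\bigr]$ with $a,\dots,f$ all nonzero, yet the three row-orthogonality relations force $bd = af = ce = 0$, a contradiction; hence no hollow orthogonal of order 3 exists. Every hollow orthogonal of order 2 has the form $Q = \bigl[\begin{smallmatrix}0 & \epsilon_1\\ \epsilon_2 & 0\end{smallmatrix}\bigr]$ with $\epsilon_i \in \{\pm 1\}$; postmultiplying by the signature matrix $D = \mathrm{diag}(\epsilon_1, \epsilon_2)$ produces the symmetric matrix $QD = \bigl[\begin{smallmatrix}0 & \epsilon_1\epsilon_2\\ \epsilon_1\epsilon_2 & 0\end{smallmatrix}\bigr]$, so $Q$ is signature equivalent to a symmetric hollow matrix.

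For the sufficiency direction ($n \geq 4$), the plan is to exhibit, for each such $n$, a hollow orthogonal matrix $B$ of order $n$ that possesses the SIPP; Theorem~\ref{hollow} then guarantees that $B$ is not signature equivalent to any symmetric hollow matrix. The strategy is to handle the two base cases $n = 4$ and $n = 5$ directly and then to extend to every larger order via composition. For each base case, I would start with a symmetric hollow orthogonal matrix $A$ of that order (which exists by the original Bailey et al.\ Theorem 2.3) and set $B := G_{12}(\theta)\, A\, G_{12}(\theta)$ for a suitable small $\theta$. The symbolic computation already carried out in Example~\ref{OMZD} shows that $B$ is still hollow and orthogonal, the cancellation of the $(1,1)$ and $(2,2)$ diagonal entries using only that $A$ is symmetric and hollow. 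Because $B$ is hollow, the condition $Y \circ B = O$ of Corollary~\ref{orthogSIPP} forces $Y$ to be diagonal, and the symmetry of $YB^T$ becomes the scalar system $y_i B_{ji} = y_j B_{ij}$; a direct calculation shows that for all but finitely many $\theta \in (0, \pi/2)$ this system has only the trivial solution, so $B$ has the SIPP.

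With SIPP witnesses in hand at orders $4$ and $5$, I would bootstrap to every $n \geq 6$ using the Craigen-style composition of Lemma~\ref{CraigenConstruction} together with Proposition~\ref{CraigenGen}. Given hollow orthogonal SIPP matrices $M$ of order $m+1 \geq 4$ and $N$ of order $n+1 \geq 4$, the matrix $Q$ of order $m+n$ produced in Lemma~\ref{CraigenConstruction} is hollow and orthogonal, and by Proposition~\ref{CraigenGen} inherits the SIPP from $M$ and $N$ (the nowhere-zero boundary hypotheses on $\bu,\bv,\bx,\by$ being automatic from hollowness). Pairing a fixed order-4 witness with an existing witness of order $k \geq 4$ produces one of order $k + 2$, so an induction starting from the base cases at $k = 4$ and $k = 5$ covers every $n \geq 4$. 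The main obstacle is the SIPP verification in the two base cases: one must confirm by explicit calculation that the $\theta$-dependent scalar system $y_i B_{ji} = y_j B_{ij}$ arising in Corollary~\ref{orthogSIPP} admits only the trivial solution for at least one admissible $\theta$, which amounts to checking that a certain rational function of $\theta$ is not identically zero.
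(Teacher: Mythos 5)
Your overall architecture matches the paper's: dispose of $n\in\{1,2,3\}$ by hand, produce a SIPP witness at orders $4$ and $5$, and then climb by two via Lemma~\ref{CraigenConstruction} and Proposition~\ref{CraigenGen} together with Theorem~\ref{hollow}. Your case analysis for $n\le 3$ and your inductive step (an order-$4$ witness composed with an order-$k$ witness gives order $k+2$, so bases $4,5$ cover all $n\ge 4$) are both correct.

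The gap is in the base case $n=4$. You propose to take a \emph{symmetric} hollow orthogonal matrix $A$ of order $4$ and set $B=G_{12}(\theta)\,A\,G_{12}(\theta)$. But no symmetric hollow orthogonal matrix of order $4$ exists, so there is nothing to conjugate. Indeed, a symmetric orthogonal matrix is an involution, and a hollow one has trace $0$, so for $n=4$ its eigenvalues are $1,1,-1,-1$; writing $A=U\,\diag(1,1,-1,-1)\,U^T$ and $p_i=(u_{i1},u_{i2})$, the hollowness of the diagonal forces $\|p_i\|^2=\tfrac12$ for every $i$ while $A_{ij}=2\,p_i\cdot p_j$, and orthonormality of the first two columns of $U$ gives $\sum_i p_ip_i^T=I_2$, equivalently $\sum_{k=1}^4 e^{2i\theta_k}=0$ when $p_k=\tfrac1{\sqrt2}(\cos\theta_k,\sin\theta_k)$. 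Four unit complex numbers summing to zero must contain an antipodal pair, so some $\theta_i-\theta_j\equiv\pi/2 \pmod{\pi}$, forcing $A_{ij}=0$ off the diagonal. Hence the order-$4$ base case cannot be obtained by your Givens-conjugation device, and the attribution to Bailey et al.'s Theorem 2.3 is also off (that result concerns existence of hollow orthogonal matrices, not symmetric ones). The paper avoids this entirely by exhibiting the explicit non-symmetric hollow orthogonal matrices of orders $4$ and $5$ from \cite{BAILEY2018} and observing, via Theorem~\ref{hollow}, that they have the SIPP. Your $n=5$ base case is also stated in terms of a symmetric seed; a symmetric hollow orthogonal matrix of order $5$ does exist (e.g.\ a circulant built from fifth roots of unity), but since you cannot get $n=4$ this way you should simply adopt the paper's explicit matrices for both base cases, after which your inductive step goes through unchanged.
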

\begin{proof}
The proof is by induction on $n$. It is easy to verify that hollow orthogonal matrices of order $n =1,3$ do not exist and that a non-symmetric hollow orthogonal matrix of order $n=2$ does not exist. Examples of hollow orthogonal matrices, not equivalent to a symmetric matrix, of order $n = 4,5$ are provided in \cite{BAILEY2018}. In particular, for $n=4$ we have
\[
A = 
\left[\begin{array}{rrrr}
0 & 1 & 1 & 1\\
1 & 0 & -1 & 1\\
1 & 1 & 0 & -1\\
1 & -1 & 1 & 0\\
\end{array}\right]
\]
and for $n = 5$ we have
\[
C =
\left[\begin{array}{ccccc}
0 & 1 & 1 & 1 & 1\\
1 & 0 & a & 1 & b\\
1 & b & 0 & a & 1\\
1 & 1 & b & 0 & a\\
1 & a & 1 & b & 0\\
\end{array}\right],
\]
where $a = \tfrac{-1 + \sqrt{3}}{2}$ and $b = \tfrac{-1 - \sqrt{3}}{2}$.

Suppose that there exist hollow orthogonal matrices, that are not signature equivalent to a symmetric matrix, for every order $m < k$ (except $m=1, 2, 3$), where $k\geq 6$. Let $B$ be such a matrix of order $k - 2$. By Theorem~\ref{hollow}, $A$ and $B$ have the SIPP. Allowing $A$ and $B$ to play the role of $M$ and $N$ in Lemma~\ref{CraigenConstruction} and Proposition~\ref{CraigenGen} produces a hollow orthogonal matrix of order $k$ that is not signature equivalent to a symmetric matrix.
\end{proof}

In order to understand the role of hollow orthogonal matrices in \cite{BAILEY2018} we require a few definitions. Let $A\in\sym n$. The \textit{graph of} $A$ is the simple graph with vertices $1,\ldots, n$ such that vertex $i$ is adjacent to vertex $j$ if and only if the $(i,j)$ entry of $A$ is nonzero. The set $\mathcal{S}(G)$ denotes the set of symmetric matrices with graph $G$. Let $q(G)$ denote the the minimum number of distinct eigenvalues of a matrix in $\mathcal{S}(G)$. Let $\alpha = \{1,\ldots, m\}$ and $\beta = \{1^\prime,\ldots,n^\prime\}$. For a bipartite graph $G$ with bipartition $\alpha \cup \beta$ let $\mathcal{B}(G)$ be the set of $m\times n$ matrices $B$, with rows and columns indexed by $\alpha$ and $\beta$ respectively, such that the $(i,j)$ entry of $B$ is nonzero if and only if $i$ and $j^\prime$ are adjacent in $G$.

\begin{theorem}[Ahmadi \textit{et al.}~\cite{Ahmadi2013}]\label{qG2}
Let $G$ be a bipartite graph with bipartition $\alpha\cup\beta$. Then $q(G) = 2$ if and only if $|\alpha| = |\beta|$ and there exists an orthogonal matrix $B\in\mathcal{B}(G)$.
\end{theorem}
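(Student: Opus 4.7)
I will handle the two implications separately. The ``if'' direction is a direct construction; the ``only if'' direction uses the classical ``square-to-identity'' normalization to isolate an orthogonal block inside the 2-eigenvalue realization.

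For the ``if'' direction, assume $|\alpha|=|\beta|=n$ and let $B\in\mathcal{B}(G)$ be orthogonal. Set
\[
A = \begin{bmatrix} O & B \\ B^T & O \end{bmatrix}.
\]
Then $BB^T=B^TB=I$ gives $A^2=I$, so the only eigenvalues of $A$ are $\pm 1$; as $A\in\mathcal{S}(G)$, this yields $q(G)\leq 2$. The existence of an orthogonal $B$ forces $G$ to have edges, so no matrix in $\mathcal{S}(G)$ is scalar and $q(G)\geq 2$.

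For the ``only if'' direction, take $A\in\mathcal{S}(G)$ witnessing $q(G)=2$, with eigenvalues $\lambda_1\neq\lambda_2$. The affine change $A\mapsto \tfrac{2}{\lambda_2-\lambda_1}\bigl(A-\tfrac{\lambda_1+\lambda_2}{2}I\bigr)$ preserves $\mathcal{S}(G)$ (the off-diagonal pattern is untouched) and yields a matrix, still called $A$, with eigenvalues $\pm 1$, hence $A^2=I$. Using the bipartition, decompose
\[
A = \begin{bmatrix} D_1 & B \\ B^T & D_2 \end{bmatrix},
\]
where $D_1,D_2$ are diagonal because bipartiteness forbids nonzero off-diagonal entries within a single part. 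Expanding $A^2=I$ gives
\[
D_1^2+BB^T=I,\quad B^TB+D_2^2=I,\quad D_1B+BD_2=O.
\]
Reading the last equation entrywise shows $(D_1)_{ii}+(D_2)_{jj}=0$ whenever $\{i,j'\}$ is an edge of $G$. Propagating along paths within each connected component $C_k$ forces $(D_1)_{ii}$ to equal a common scalar $c_k$ on the $\alpha$-side of $C_k$ and $(D_2)_{jj}=-c_k$ on the $\beta$-side. Substituting back, the submatrix $B_k$ of $B$ indexed by $C_k$ satisfies $B_kB_k^T=(1-c_k^2)I=B_k^TB_k$; since $C_k$ containing an edge makes $B_k\neq O$, we must have $c_k^2<1$, and then $(1-c_k^2)^{-1/2}B_k$ is a square orthogonal matrix with exactly the zero/nonzero pattern prescribed by $C_k$. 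Summing $|\alpha_k|=|\beta_k|$ across components gives $|\alpha|=|\beta|$, and the block direct sum of these normalized blocks is an orthogonal matrix in $\mathcal{B}(G)$.

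The main technical obstacle will be the treatment of isolated vertices of $G$: an isolated $\alpha$-vertex $i$ contributes $a_{ii}$ as a standalone eigenvalue and counts toward $|\alpha|$ without a matching contribution to any $|\beta_k|$. Since an orthogonal matrix in $\mathcal{B}(G)$ has no zero row or column, its existence rules out isolated vertices, and the cleanest way to close the argument is to reduce first to the graph obtained by deleting isolated vertices (or, equivalently, to argue that under the stated hypothesis $q(G)=2$ is incompatible with having isolated vertices on only one side). Apart from this bookkeeping, the proof is driven entirely by the $A^2=I$ block-decomposition analysis.
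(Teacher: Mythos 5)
The paper states Theorem~\ref{qG2} as a cited result of Ahmadi \textit{et al.}\ \cite{Ahmadi2013} and supplies no proof of its own, so there is no in-paper argument to compare your proposal against; I will assess it on its merits. Your argument is the standard one for this result and the block computations are sound: the forward direction via $A = \begin{bmatrix} O & B \\ B^T & O \end{bmatrix}$ gives an involution in $\mathcal{S}(G)$, and in the converse normalizing to $A^2=I$, reading $D_1B+BD_2=O$ entrywise to get component-wise constants $\pm c_k$, rank-counting from $B_kB_k^T=(1-c_k^2)I=B_k^TB_k$ to force $|\alpha_k|=|\beta_k|$ with $c_k^2<1$, and assembling the rescaled direct sum into $\mathcal{B}(G)$ are all correct. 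The one point that does not close is precisely the isolated-vertex caveat you raise yourself: your second proposed workaround (``argue that $q(G)=2$ is incompatible with having isolated vertices on only one side'') is false. For example, $K_2$ together with one isolated vertex has $q(G)=2$ (two distinct eigenvalues are clearly achievable, and at least two are forced by the edge), yet neither admissible bipartition has $|\alpha|=|\beta|$ and no matrix in $\mathcal{B}(G)$ can be orthogonal since it would have a zero row; so for that graph the stated equivalence itself fails. The correct resolution is that the theorem is implicitly about connected bipartite graphs, as in the original source, under which hypothesis there are no isolated vertices, the bipartition is determined up to swapping the parts, and your component bookkeeping collapses to a single block, after which your proof is complete.
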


Let $G_n$ denote the graph obtained from the complete bipartite graph $K_{n,n}$ by deleting a perfect matching. Using Lemma~\ref{CraigenConstruction} and Theorem~\ref{qG2} the authors of \cite{BAILEY2018} show the following.

\begin{theorem}[Bailey \textit{et al.}~\cite{BAILEY2018}]
Where $G_n$ is as above, $q(G_n) = 2$ unless $n=1$ or $n=3$.
\end{theorem}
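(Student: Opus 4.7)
The plan is to reduce the claim directly to the existence of a hollow orthogonal matrix of order $n$, and then invoke what has already been established (or observed) about such matrices in Lemma~\ref{HollowSigEquiv}.

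The first step is to identify $\mathcal{B}(G_n)$ combinatorially. By definition, $G_n$ is $K_{n,n}$ with a perfect matching deleted, so after labeling the bipartition $\alpha=\{1,\dots,n\}$ and $\beta=\{1',\dots,n'\}$ so that the deleted matching is $\{i\,i'\}$, vertex $i$ is adjacent to $j'$ in $G_n$ if and only if $i\neq j$. Consequently $\mathcal{B}(G_n)$ consists of exactly the $n\times n$ matrices whose $(i,j)$-entry is nonzero for $i\neq j$ and zero for $i=j$, i.e.\ the hollow $n\times n$ matrices. Theorem~\ref{qG2} then says $q(G_n)=2$ if and only if there is a hollow orthogonal matrix of order $n$.

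The second step is the existence question. For $n\notin\{1,2,3\}$, Lemma~\ref{HollowSigEquiv} produces a hollow orthogonal matrix of order $n$ (in fact one that is not signature equivalent to a symmetric matrix, which is more than is needed here), so $q(G_n)=2$. For $n=2$, the hollow matrix $\bigl[\begin{smallmatrix}0&1\\1&0\end{smallmatrix}\bigr]$ is orthogonal, so $q(G_2)=2$. For $n=1$ and $n=3$, the observations recorded at the start of the proof of Lemma~\ref{HollowSigEquiv} state that no hollow orthogonal matrix of order $1$ or $3$ exists (order $1$ is trivial since the sole entry is forced to be $0$; for order $3$, orthonormality of the rows together with the zero diagonal quickly leads to a contradiction on the squared row-norms). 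Hence $q(G_n)\neq 2$ in those two cases.

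There is no real obstacle here: the theorem is essentially the translation of Theorem~\ref{qG2} into the concrete language of hollow orthogonal matrices, glued together with Lemma~\ref{HollowSigEquiv}. The only point requiring minor care is the bookkeeping in the first step (correctly identifying $\mathcal{B}(G_n)$ as the set of hollow matrices), and noting that for the values $n=2$ and $n\geq 4$ a symmetric hollow orthogonal matrix suffices---the full strength of Lemma~\ref{HollowSigEquiv} (non-signature-equivalence to a symmetric matrix) is not needed for this theorem.
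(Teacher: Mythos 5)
The paper does not actually prove this theorem; it cites \cite{BAILEY2018} and merely remarks that the result follows from Lemma~\ref{CraigenConstruction} and Theorem~\ref{qG2}. Your proof fills in that sketch correctly: you identify $\mathcal{B}(G_n)$ as the hollow $n\times n$ matrices, apply Theorem~\ref{qG2} to reduce the claim to the existence of a hollow orthogonal matrix of order $n$, and then invoke Lemma~\ref{HollowSigEquiv} (which packages the Lemma~\ref{CraigenConstruction} induction) together with the small-order checks for $n=1,2,3$. Your remark that the full strength of Lemma~\ref{HollowSigEquiv} (non-signature-equivalence to a symmetric matrix) is more than is needed here is also correct, since Theorem~\ref{qG2} only asks for existence of an orthogonal matrix in $\mathcal{B}(G_n)$. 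This is essentially the same route the paper attributes to \cite{BAILEY2018}.
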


Let $G_{n,k}$ denote the bipartite graph obtained by deleting a matching of size $k$ from $K_{n,n}$. In order to establish that $q(G_{n,k}) = 2$ the authors of \cite{BAILEY2018} constructed many orthogonal matrices. However, this observation is a simple consequence of the SIPP. A \defi{partially hollow matrix} to be any matrix with no zero entries off the main diagonal.

\begin{theorem}[Bailey \textit{et al.}~\cite{BAILEY2018}]
Suppose that $n\geq 1$ and $1\leq k \leq n$. Let $G_{n,k}$ be defined as above. Then $q(G_{n,k}) = 2$ if and only if $(n,k)\notin\{(1,1),(2,1),(3,2),(3,3)\}$.
\end{theorem}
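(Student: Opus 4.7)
The plan is to invoke Theorem~\ref{qG2} to reduce the claim to a question about orthogonal matrices: after relabeling so that the deleted matching is $\{(i,i'):1\le i\le k\}$, we have $q(G_{n,k})=2$ if and only if there exists an $n\times n$ orthogonal matrix $Q$ whose zero entries occupy exactly the positions $(1,1),\ldots,(k,k)$. From there I verify non-existence for the four excluded pairs and build such a $Q$ for every other admissible pair.

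Non-existence in the excluded cases is quick. For $(1,1)$, every $1\times 1$ orthogonal matrix is $\pm 1$. For $(2,1)$, $Q_{11}=0$ together with $Q_{11}^2+Q_{12}^2=1$ forces $Q_{12}=\pm 1$, after which orthogonality of the two rows forces $Q_{22}=0$. For both $(3,2)$ and $(3,3)$, the inner product of rows $1$ and $2$ of the putative $Q$ simplifies to $Q_{13}Q_{23}=0$, contradicting that these entries are nonzero; this is the same obstruction used inside the proof of Lemma~\ref{HollowSigEquiv} to rule out hollow orthogonal matrices of order $3$.

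For the existence direction in the generic range $n\ge 4$, the SIPP does essentially all the work. Lemma~\ref{HollowSigEquiv} supplies a hollow $H\in\Orth(n)$ not signature equivalent to a symmetric matrix, and Theorem~\ref{hollow} then certifies that $H$ has the SIPP. Let $S=\sgn(H)$ and choose a sign pattern $R$ whose zero entries are exactly at positions $(1,1),\ldots,(k,k)$. Because $S$ is already nonzero off the diagonal, the super pattern $S_{\vec R}$ has zeros exactly at $(1,1),\ldots,(k,k)$ and is nonzero elsewhere, and Theorem~\ref{SIPPTran} produces an orthogonal realization of $S_{\vec R}$. This single argument covers every $(n,k)$ with $n\ge 4$ and $1\le k\le n$.

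The two remaining non-exceptional pairs, $(2,2)$ and $(3,1)$, are dispatched by hand. The matrix $\left[\begin{array}{cc}0 & 1\\ 1 & 0\end{array}\right]$ settles $(2,2)$. For $(3,1)$, I begin with the order-$3$ orthogonal Hessenberg matrix of Corollary~\ref{Hess}, which has the SIPP; its transpose has the SIPP by Proposition~\ref{transpose}, and swapping rows $1$ and $3$ via a signed permutation preserves the SIPP by Lemma~\ref{sign equiv}, producing an orthogonal matrix whose unique zero sits at position $(1,1)$. The main difficulty in the proof is not the SIPP machinery but the identification of the small exceptional set: hollow orthogonal matrices of order $3$ do not exist, so the uniform ``hollow plus super pattern'' argument is unavailable for $n\le 3$ and the small cases must be handled separately as above.
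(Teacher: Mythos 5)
Your proof is correct and follows essentially the same path as the paper: reduce via Theorem~\ref{qG2} to the existence of a partially hollow orthogonal matrix of order $n$ with $k$ diagonal zeros, handle the cases $n,k\le 3$ by direct inspection, and for $n\ge 4$ invoke Lemma~\ref{HollowSigEquiv} together with Theorem~\ref{hollow} and Theorem~\ref{SIPPTran} (the paper packages this last pair as Corollary~\ref{SupHollow}). You simply spell out the small cases that the paper dismisses as ``readily verifiable,'' which is a welcome but not structurally different addition.
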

\begin{proof}
By Theorem~\ref{qG2} $q(G_{n,k}) = 2$ if and only if there exists an orthogonal partially hollow matrix of order $n$ with exactly $k$ zero entries. For values of $n,k\leq 3$ the result is readily verifiable. The claim now follows from Lemma~\ref{HollowSigEquiv} and Corollary~\ref{SupHollow}.
\end{proof}

%
%
\section{Verification Matrix and Matrix Liberation}\label{SecVerif}

As we have seen, there are orthogonal matrices that do not have the SIPP. Recall that $Q\in\Orth(m,n)$, with sign pattern $S$, has the SIPP if and only if the manifolds $\Orth(m,n)$ and $\Q(S)$ intersect transversally at $Q$. In the setting where $Q$ does not have the SIPP we can replace $\Q(S)$ with an appropriately chosen manifold and apply the techniques of Section~\ref{develop} to obtain a result similar to Theorem~\ref{SIPPTran}. This new result, Theorem~\ref{AlgVerif}, allows us to determine some super patterns of $S$ that allow orthogonality.

The choice of manifold replacing $\Q(S)$ has some motivation. The best linear approximation to $\Orth(m,n)$ at $Q$ is $\T_{\Orth(m,n)\cdot Q}$. If we perturb $Q$ in the  direction of a matrix in $\T_{\Orth(m,n)\cdot Q}$ we can hopefully adjust the entries, all without changing the signs of the nonzero entries, so that we still have an orthogonal matrix. We will need to include a subspace of $\T_{\Orth(m,n)\cdot Q}$ in our new manifold. Many of the techniques in this section are motivated by the work in \cite{BARRETT2017}.

We begin by codifying $\T_{\Orth(m,n)\cdot Q}$ as the column space of an appropriately chosen matrix. Let $P\in\Orth(n)$ satisfy $QP^T = \left[\begin{array}{@{}c|c@{}}I & O\end{array}\right]$ and for $i\not=j$ define $K_{ij}$ to be the $m\times n$ matrix $E_{ij} - E_{ji}$. By Lemma~\ref{TOmn} the matrices
\[
B_{ij} =
\begin{cases}
K_{ij}P & \text{if } 1\leq i<j\leq m, \text{ and}\\
E_{ij}P & \text{if } m< j\leq n \text{ and } 1\leq i \leq m
\end{cases}
\]
form a basis for $\T_{\Orth(m,n)\cdot Q}$. For this choice of $P$, we define the \defi{tangent space matrix} $\ts_P(Q)$ \defi{of} $Q$ to be the $mn\times (mn - \binom{m+1}{2})$ matrix whose $(i,j)$-column is $\ve(B_{ij})$.

The matrix $\ts_P(Q)$ encodes more information than we need. For any set $E$ of pairs $(i,j)$ satisfying $1\leq i\leq m$ and $1\leq j \leq n$, $\ve_E(A)$ is the subvector of $\ve(A)$ of dimension $|E|$ that contains only the entries corresponding to the indices in $E$. Let $Z=\{(i,j): q_{ij} = 0\}$ and $p = |Z|$. The  \defi{tangent verification matrix} $\Psi_P(Q)$ \defi{of} $Q$ is the $p\times  (mn - \binom{m}{2})$ matrix $\ts_P(Q)[Z,:]$. That is, $\Psi_P(Q)$ is the restriction of $\ts_P(Q)$ to the rows corresponding to the zero entries of $Q$. If $Q\in\Orth(n)$ then $P$ is uniquely determined and we will write $\Psi(Q)$ in place of $\Psi_P(Q)$.

We can also represent $\norm_{\Orth(m,n)\cdot Q}$ as the column space of a matrix. By Lemma~\ref{NOmn} the $m\times n$ matrices
\[
C_{ij} = 
\begin{cases}
(E_{ij} + E_{ji})Q & \text{if } i < j\\
E_{ij}Q & \text{if } i=j,
\end{cases}
\]
form a basis for $\norm_{\Orth(m,n)\cdot Q}$. The \defi{normal space matrix} $\ns(Q)$ of $Q$ is the matrix whose $(i,j)$ column is $\ve(C_{ij})$. Let $E = \{(i,j): q_{ij} \not= 0\}$. The \defi{normal verification matrix} $\Omega(Q)$ of $Q$ is the matrix $\ns(Q)[E,:]$. 
\begin{observation}
When applying Lemma~\ref{MLL} to $\ns(Q)$ we require matrices $B\in \T_{\Orth(m,n)\cdot Q}$. Observe that $B\in \T_{\Orth(m,n)\cdot Q}$ if and only if $\ve(B)^T\ns(Q) = \bzero$.
\end{observation}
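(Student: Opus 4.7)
The plan is to exploit the fact that $\T_{\Orth(m,n)\cdot Q}$ and $\norm_{\Orth(m,n)\cdot Q}$ are orthogonal complements in $\R^{m\times n}$ (viewed as $\R^{mn}$ via $\ve$ with the inner product $\ip{A,B}=\tr(AB^T)=\ve(A)^T\ve(B)$). Since being in the tangent space is equivalent to being orthogonal to every vector in the normal space, and since the columns of $\ns(Q)$ are built from a spanning set of the normal space, the characterization should drop out after unwinding definitions.

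More concretely, I would proceed as follows. First, recall from Lemma~\ref{NOmn} and the construction immediately preceding the observation that the matrices $C_{ij}$ (with $i\leq j$) span $\norm_{\Orth(m,n)\cdot Q}$, and by definition the columns of $\ns(Q)$ are exactly the vectors $\ve(C_{ij})$. Next, note that under $\ve$ the inner product on $\R^{m\times n}$ translates to the standard dot product on $\R^{mn}$, so for any $B\in\R^{m\times n}$ and any $(i,j)$ we have $\ip{B,C_{ij}} = \ve(B)^T\ve(C_{ij})$. Finally, combine the two: $B\in \T_{\Orth(m,n)\cdot Q}$ if and only if $\ip{B,C}=0$ for every $C\in \norm_{\Orth(m,n)\cdot Q}$, which by the spanning property holds if and only if $\ip{B,C_{ij}}=0$ for every basis element, which in turn is the statement that $\ve(B)^T\ns(Q)=\bzero$.

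There is no genuine obstacle here; the only thing to verify carefully is that the $C_{ij}$ actually span (not merely lie in) the normal space, but that is exactly the content of Lemma~\ref{NOmn}. In particular, one does not need $\{C_{ij}\}$ to be a basis, only a spanning set, so linear independence of the columns of $\ns(Q)$ is immaterial for the ``if and only if.'' The proof is therefore a short, one-paragraph dualization argument relying on Lemma~\ref{NOmn} and the identification of the ambient inner product via $\ve$.
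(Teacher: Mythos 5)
Your argument is correct and is the natural unwinding of definitions the paper leaves implicit (the statement is labeled an observation precisely because no explicit proof is given). You correctly identify the three ingredients: the normal space is by definition the orthogonal complement of the tangent space, $\ve$ converts $\ip{\cdot,\cdot}=\tr(\cdot\,\cdot^T)$ to the standard dot product on $\R^{mn}$, and the columns of $\ns(Q)$ are $\ve(C_{ij})$ for a spanning set $\{C_{ij}\}$ of $\norm_{\Orth(m,n)\cdot Q}$. Your remark that only spanning (not linear independence) is needed is also accurate, though the paper has already shown in Lemma~\ref{NOmn} and its surrounding discussion that the $C_{ij}$ form a basis, so this is a minor loosening of hypotheses rather than a real divergence.
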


We require the following lemma from \cite{Holst2009}.

\begin{lemma}[Holst \textit{et al.}~\cite{Holst2009}]\label{trans2}
Assume $\M_1$ and $\M_2$ are manifolds that intersect transversally at $\by$ and let $\bv$ be a common tangent to each of $\M_1$ and $\M_2$ with $\|\bv\| = 1$. Then for every $\epsilon > 0$ there exists a point $\by^\prime \not= \by$ such that $\M_1$ and $\M_2$ intersect transversally at $\by^\prime$, and
\[
\left\|\frac{1}{\|\by-\by^\prime\|}(\by-\by^\prime)-\bv\right\| < \epsilon.
\]
\end{lemma}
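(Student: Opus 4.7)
The plan is to exploit the fact that transversal intersection of $\M_1$ and $\M_2$ at $\by$ forces $\M_1 \cap \M_2$ to be, locally near $\by$, a smooth submanifold whose tangent space at $\by$ is exactly $\T_{\M_1\cdot \by} \cap \T_{\M_2\cdot \by}$. This is a standard consequence of the implicit function theorem: if each $\M_i$ is locally the zero set of a submersion $F_i$, then transversality at $\by$ is equivalent to $(F_1,F_2)$ being a submersion near $\by$, so their common zero set is a manifold of the expected dimension with the expected tangent space.

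Given this local structure, the unit vector $\bv \in \T_{\M_1\cdot \by} \cap \T_{\M_2\cdot \by}$ is a tangent to $\M_1 \cap \M_2$ at $\by$. By the definition of tangent space via derivatives of smooth paths, I would extract a smooth curve $\gamma:(-1,1) \to \M_1 \cap \M_2$ with $\gamma(0) = \by$ and $\dot{\gamma}(0) = \bv$. For $s < 0$ of small absolute value, set $\by^\prime = \gamma(s) \neq \by$. The Taylor expansion $\gamma(s) = \by + s\bv + O(s^2)$ gives $\by - \by^\prime = -s\bv + O(s^2)$ and $\|\by - \by^\prime\| = |s| + O(s^2)$ (using $\|\bv\|=1$), so for $s < 0$ the unit vector $(\by - \by^\prime)/\|\by - \by^\prime\|$ tends to $\bv$ as $s \to 0^-$. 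Choosing $|s|$ sufficiently small makes the approximation in the lemma within $\epsilon$.

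It remains to verify that transversality persists at $\by^\prime$ for $|s|$ small. The tangent spaces $\T_{\M_i\cdot \gamma(s)}$ vary continuously with $s$ (for instance, as null spaces of $DF_i$ evaluated at $\gamma(s)$), and the condition that the sum of two subspaces of fixed dimensions equals $\R^d$ is open (e.g., it is the non-vanishing of a suitable determinant built from bases of the two spaces). Hence transversality holds throughout an open neighborhood of $\by$ inside $\M_1 \cap \M_2$; shrinking $|s|$ to lie inside this neighborhood \emph{and} to meet the $\epsilon$-bound simultaneously completes the argument.

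The main obstacle I anticipate is a cosmetic but easy-to-botch sign issue: choosing $s > 0$ naively yields $(\by - \by^\prime)/\|\by - \by^\prime\| \to -\bv$, not $\bv$, so one must either restrict to $s<0$ or replace $\gamma$ by $s \mapsto \gamma(-s)$, as above. A secondary point requiring care is citing the correct local description of $\M_1 \cap \M_2$; invoking the implicit function theorem directly on the combined defining map $(F_1, F_2)$ is the cleanest route and avoids reliance on heavier transversality machinery.
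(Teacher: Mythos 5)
The paper does not supply its own proof of Lemma~\ref{trans2}; it is imported verbatim from the reference~\cite{Holst2009} (the text reads ``We require the following lemma from~\cite{Holst2009}''), so there is no in-paper argument to compare against. Evaluated on its own, your proof is correct and is the natural route: transversality at $\by$ makes $\M_1\cap\M_2$ locally a submanifold with tangent space $\T_{\M_1\cdot\by}\cap\T_{\M_2\cdot\by}$, so $\bv$ is realized as the velocity of a curve $\gamma$ in the intersection; Taylor expansion gives the approximation in the lemma; and persistence of transversality follows because it is an open condition on the point (e.g.\ full-rankness of the combined Jacobian $D(F_1,F_2)$). You also correctly flag the sign issue requiring $s<0$, which is the easiest place to slip. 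One small point worth making explicit: $\gamma(s)\neq\by$ for all sufficiently small nonzero $s$ because $\dot\gamma(0)=\bv\neq\bzero$, which is what justifies the clause $\by'\neq\by$ in the conclusion.
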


The Matrix Liberation Lemma, below, is named after Lemma 7.3 in \cite{BARRETT2017}. This technical lemma is useful for working with specific matrices. It is also used to prove the more algebraic result Theorem~\ref{AlgVerif}.

\begin{lemma}[Matrix Liberation Lemma]\label{MLL}
Let $Q\in\Orth(m,n)$ have sign pattern $S$ and $B\in\T_{\Orth(m,n)\cdot Q}$ have sign pattern $R$. Let $P\in\Orth(n)$ satisfy $QP^T=\left[\begin{array}{@{}c|c@{}}I & O\end{array}\right]$ and $E = \{(i,j): b_{ij} \not= 0 \text{ or } q_{ij} \not= 0\}$, where $b_{ij}$ and $q_{ij}$ are the $(i,j)$ entries of $B$ and $Q$ respectively. Then every super pattern of $S_{\vec{R}}$ allows orthogonality provided
\begin{enumerate}
\item[\rm (i)]
the complement of the support of $\ve(S_{\vec{R}})$ corresponds to a linearly independent set of rows in $\Psi_P(Q)$; or equivalently

\item[\rm (ii)]
the columns of $NS(Q)[E,:]$ are linearly independent.
\end{enumerate}

\end{lemma}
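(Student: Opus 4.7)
My plan is to establish both the equivalence of (i) and (ii) and the conclusion about super patterns by combining an orthogonal-complement duality with a two-step geometric argument (Lemma~\ref{trans2} followed by Theorem~\ref{IFT}), using the linear manifold $\M_0$ of matrices supported on $E$.

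\emph{Equivalence of (i) and (ii).} Under the identification by $\ve$, the columns of $\ts_P(Q)$ and $\ns(Q)$ span the orthogonally complementary subspaces $\T:=\T_{\Orth(m,n)\cdot Q}$ and $\norm:=\norm_{\Orth(m,n)\cdot Q}$ of $\R^{mn}$. Let $W_E$ (resp.\ $W_{E^c}$) denote the subspace of matrices supported on $E$ (resp.\ on its complement). Condition (i) translates to $\T+W_E=\R^{mn}$ (surjectivity of the coordinate projection $\T\to\R^{|E^c|}$), and condition (ii) translates to $\norm\cap W_{E^c}=\{\bzero\}$ (injectivity of the coordinate projection $\norm\to\R^{|E|}$). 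These are equivalent via orthogonal complements, using $\T^\perp=\norm$ and $W_E^\perp=W_{E^c}$.

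\emph{Geometric setup and moving along $B$.} The manifold $\M_0:=W_E$ has tangent and normal spaces $W_E$ and $W_{E^c}$ everywhere. Since $q_{ij}=0$ for $(i,j)\in E^c$, we have $Q\in\M_0$, and condition (ii) says exactly that $\Orth(m,n)$ and $\M_0$ intersect transversally at $Q$. The vector $B$ is a common tangent: $B\in\T_{\Orth(m,n)\cdot Q}$ by hypothesis and $B\in\T_{\M_0\cdot Q}=W_E$ because $\supp(B)\subseteq E$. The case $B=O$ reduces to Theorem~\ref{SIPPTran}, so I assume $B\neq O$ and apply Lemma~\ref{trans2} to $\Orth(m,n)$ and $\M_0$ with unit common tangent $\pm B/\|B\|$, choosing the sign so that $(Q'-Q)/\|Q'-Q\|$ approximates $B/\|B\|$. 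This produces $Q'\in\Orth(m,n)\cap\M_0$ with $Q'\neq Q$ and transversal intersection at $Q'$. A direct sign check then shows $\sgn(Q')=S_{\vec{R}}$: entries with $q_{ij}\neq 0$ keep their sign by continuity; entries with $q_{ij}=0$ and $b_{ij}\neq 0$ inherit $\sgn(b_{ij})=r_{ij}$ from the $B$-direction; and entries in $E^c$ are zero because $Q'\in\M_0$.

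\emph{Lifting to super patterns.} Given any super pattern $T=[t_{ij}]$ of $S_{\vec{R}}$, define $W\in\R^{m\times n}$ by $W_{ij}=t_{ij}$ for $(i,j)\in E^c$ and $W_{ij}=0$ otherwise, and consider the smooth family of affine manifolds $\M(\epsilon):=\{A:a_{ij}=\epsilon W_{ij}\text{ for all }(i,j)\in E^c\}$. Then $\M(0)=\M_0$ and $\T_{\M(\epsilon)\cdot A}=W_E$ throughout, so transversality of $\Orth(m,n)$ with $\M(0)$ at $Q'$ persists. Theorem~\ref{IFT} (applied with $\Orth(m,n)$ as a constant family) yields a continuous $g$ with $g(0)=Q'$ and $g(\epsilon)\in\Orth(m,n)\cap\M(\epsilon)$ for all sufficiently small $\epsilon$. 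For small $\epsilon>0$, the entries $g(\epsilon)_{ij}=\epsilon t_{ij}$ on $E^c$ have sign $t_{ij}$, while on $E$ the entries of $g(\epsilon)$ are close to those of $Q'$ and so have signs matching $S_{\vec{R}}$, which agrees with $T$ on $E$. Hence $\sgn(g(\epsilon))=T$, completing the proof. The delicate step is the directional control: Theorem~\ref{IFT} by itself would not guarantee correct signs on $E\setminus\supp(Q)$, and it is Lemma~\ref{trans2} that forces movement along the particular tangent $B$ and thereby locks those signs in before the final perturbation.
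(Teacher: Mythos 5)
Your argument matches the paper's for the most part, and the orthogonal-complement derivation of the equivalence (i) $\Leftrightarrow$ (ii) is clean and correct. However, there is a real gap in Stage 1. You replace the paper's manifold $\M = \Q(S) + \spn{E_{ij}: s_{ij}=0,\ r_{ij}\neq 0}$ with the full coordinate subspace $\M_0 = W_E$. These have the same tangent and normal spaces at $Q$, so the transversality step is unaffected, but the difference matters for the sign check. Membership $Y\in\M$ directly forces $\sgn(y_{ij}) = s_{ij}$ for $s_{ij}\neq 0$ --- that is how the paper controls the signs on the original support of $Q$ without any metric estimate. Membership $Q'\in\M_0$ gives no such constraint; your claim that ``entries with $q_{ij}\neq 0$ keep their sign by continuity'' needs $Q'$ to be close to $Q$, and Lemma~\ref{trans2} as stated does not supply this: it controls only the \emph{direction} of $Q'-Q$, not its magnitude. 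If $\|Q'-Q\|$ is large, an entry with $q_{ij}\neq 0$ and $\sgn(b_{ij})\neq\sgn(q_{ij})$ (or with $b_{ij}=0$) can flip sign. The fix is to adopt the paper's $\M$, which is an open subset of $W_E$ carrying the same tangent space but encoding the sign constraints on the support of $Q$ in its definition, so the sign check there requires no closeness.

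Your treatment of the super-pattern part is a genuinely different route. The paper re-applies the Lemma~\ref{trans2} argument recursively: condition (i) restricted to the complement of $\supp(\ve(S_{\vec{R}}))$ lets one replace $B$ by a perturbed tangent $\hat B$ whose direction realizes any desired super pattern, and the same transversal-move argument then produces the new orthogonal realization. You instead run Lemma~\ref{trans2} once to produce $Q'$ and then invoke Theorem~\ref{IFT} with a moving affine family $\M(\epsilon)$ that pins the $E^c$-coordinates to $\epsilon\, t_{ij}$. This is a clean separation of concerns, and it works: Theorem~\ref{IFT} gives a \emph{continuous} family $g(\epsilon)$ with $g(0)=Q'$, which (unlike Lemma~\ref{trans2}) does justify a sign-by-continuity argument on $E$, while the construction of $\M(\epsilon)$ forces the correct signs on $E^c$. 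The trade-off is that your approach front-loads the transversal move to a single $B$ and then uses the implicit-function machinery, whereas the paper reuses a single technical lemma throughout; both are valid, and yours is arguably easier to see is correct once Stage 1 is repaired.
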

\begin{proof}
Without loss of generality we may assume $\|B\| = 1$. Let $s_{ij}$ and $r_{ij}$ denote the $(i,j)$ entry of $S$ and $R$ respectively. Define the smooth manifold
\[
\M = \Q(S) + \spn{E_{ij} \in\R^{m\times n} : s_{ij} = 0 \text{ and } r_{ij} \not=0}.
\]
Let $t_{ij}$ denote the $(i,j)$ entry of $S_{\vec{R}}$. Then the tangent space 
\begin{equation}\label{TanM}
\T_{\M\cdot Q} = \spn{E_{ij} \in\R^{m\times n} : t_{ij}\not=0}
\end{equation}
and normal space
\begin{equation}\label{NormM}
\norm_{\M\cdot Q} = \spn{E_{ij} \in\R^{m\times n} : t_{ij}=0}.
\end{equation}

Assume (i) holds. Notice that the complement of the support of $\ve(S_{\vec{R}})$ is the set $Z = \{(i,j): t_{ij} = 0\}$. Let $N_0 = |Z|$. By assumption, the rows in $\Psi_P(Q)[Z,:]$ are linearly independent. Hence the columns of $\Psi_P(Q)[Z,:]$ span $\R^{N_0}$. By (\ref{TanM})
\[
\T_{M\cdot Q} + \T_{\Orth(m,n)\cdot Q}\supseteq\spn{E_{ij} \in\R^{m\times n} : t_{ij} = 0}
\]
and so $\T_{M\cdot Q} + \T_{\Orth(m,n)\cdot Q} = \R^{m\times n}$.

Now assume (ii) holds. Let $A \in \norm_{\M\cdot Q}\cap\norm_{\Orth(m,n)\cdot Q}$. By (\ref{NormM}) the $(i,j)$ entry of $A$ is zero whenever $t_{ij}\not = 0$. Notice that $E = \{(i,j): t_{ij}\not=0\}$. Having assumed the columns of $NS(Q)[E,:]$ are linearly independent, the only solution to
\[
\sum_{i \leq j} x_{ij} \ve_E(B_{ij}) = \bzero
\]
is the trivial solution (each $x_{ij} = 0$). Since $A\in \norm_{\Orth(m,n)\cdot Q}$ and the vectors $B_{ij}$ form a basis for $\norm_{\Orth(m,n)\cdot Q}$ it follows that $A = O$. Thus, $\norm_{\M\cdot Q}\cap\norm_{\Orth(m,n)\cdot Q} = \{O\}$.

In both cases $\M$ and $\Orth(m,n)$ intersect transversally at $Q$. Observe that $-B$ is a common tangent to $\Orth(m,n)$ and $\M$. Lemma ~\ref{trans2} guarantees that for every $\epsilon > 0$ there exists some $Y=[y_{ij}]$ such that $\Orth(m,n)$ and $\M$ intersect transversally at $Y$ and
\begin{equation}\label{ineq}
\left\|\frac{1}{\|Q-Y\|}(Q-Y)+B\right\| < \epsilon.
\end{equation}
Given that $Y\in\Orth(m,n)$ it remains to show that $\sgn(Y) = S_{\vec{R}}$. Since $Y \in \M$ it follows that $\sgn(y_{ij}) = s_{ij}$ whenever $s_{ij}\not=0$, and $y_{ij} = 0$ whenever $s_{ij}=r_{ij} = 0$. Suppose $s_{ij} = 0$ and $r_{ij}\not=0$. It follows from (\ref{ineq}) that $|b_{ij} - cy_{ij}|<\epsilon$, where $c = 1/\|Q - Y\|$. For $\epsilon$ small enough, $\sgn(y_{ij}) = r_{ij}$. Thus, $Y\in\Orth(m,n)$ has sign pattern $S_{\vec{R}}$. 

It remains to show that every super pattern of $S_{\vec{R}}$ allows orthogonality. Since the rows of $\Psi_P(Q)$ corresponding to the complement of the support of $\ve(S_{\vec{R}})$ are linearly independent, every super pattern $\hat{S}$ of $S_{\vec{R}}$ is the sign pattern of a matrix in $\T_{\Orth(m,n)\cdot Q}$. Further, the rows of $\Psi_P(Q)$ corresponding to the complement of the support of $\hat{S}$ are linearly independent. By the preceding argument, $\hat{S}$ allows orthogonality.
\end{proof}

The following observations are useful for working with verification matrices and the Matrix Liberation Lemma. Let $Q, P$ and $E$ be defined as in Lemma~\ref{MLL}.

\begin{observation}\label{reduceVerif}
When computing the verification matrix of $Q$ it is prudent to label the rows and columns of the verification matrix. The columns correspond to specific basis elements (from either the tangent or normal space) and the rows correspond to specific entries of $Q$. Permuting the columns and rows of a verification matrix preserves all relevant data. However, in doing so, it is necessary to record how the labels change.
\end{observation}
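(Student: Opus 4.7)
The plan is to recognize that this Observation packages a simple linear-algebra fact together with a bookkeeping caveat, so the ``proof'' is mostly a matter of unpacking what is being claimed and reducing it to the statement of the Matrix Liberation Lemma (Lemma~\ref{MLL}). The mathematical content is that the hypotheses of Lemma~\ref{MLL} are invariant under row and column permutations of the verification matrices $\Psi_P(Q)$ and $\ns(Q)$. First I would recall that condition (i) of Lemma~\ref{MLL} asserts the linear independence of a designated subset of rows of $\Psi_P(Q)$, while condition (ii) asserts the linear independence of a designated subset of columns of $\ns(Q)[E,:]$. Since any permutation matrix is invertible, linear independence of a specified subset of rows (respectively columns) of a matrix is preserved under row and column permutations. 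Thus the truth value of each hypothesis of Lemma~\ref{MLL} depends only on the multiset of the relevant rows or columns, not on their order within the verification matrix, which settles the first half of the observation.

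Next I would address the second half, ``it is necessary to record how the labels change,'' which is really a definition-chasing reminder. To justify it I would point out that the rows of $\Psi_P(Q)$ are by construction indexed by the entry positions $(i,j)$ of $Q$, and the subset singled out by condition (i) is precisely the set $Z$ of zero entries of $S_{\vec R}$; similarly, each column of $\ns(Q)$ corresponds to a specific basis element $C_{ij}$ of $\norm_{\Orth(m,n)\cdot Q}$, and condition (ii) selects the columns indexed by $E$. Consequently, after a permutation one cannot identify the ``correct'' subset of rows or columns without the bijection between the original labels $(i,j)$ and the new positions.

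The hard part, to the extent there is any, is purely notational: one must write the action of the permutation as a map on the label set $\{(i,j)\}$ and verify that the image of $Z$ (respectively $E$) under this map indexes the same submatrix, now viewed inside the permuted verification matrix. With that bookkeeping in place, the earlier linear-independence argument is a literal restatement of Lemma~\ref{MLL}, and no genuine mathematical obstacle arises. The point of the observation is methodological: the verification matrix is a rank-theoretic object, not a positional one, so any reordering that simplifies computation is legitimate as long as the correspondence between positions and labels is maintained.
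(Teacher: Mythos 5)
The paper provides no proof of this Observation: it is set in the remark theorem-style and is offered as a methodological aside rather than a result requiring demonstration. Your elaboration is correct and faithful to the intended meaning — the mathematical content is indeed just that linear independence of a designated subset of rows (resp.\ columns) of $\Psi_P(Q)$ or $\ns(Q)[E,:]$ is preserved under row/column permutations, and the caveat about labels is exactly the bookkeeping point needed to apply Lemma~\ref{MLL} correctly after reordering. Since there is no ``paper's proof'' to compare against, the only remark worth making is that you have supplied a justification where the authors merely asserted; nothing in your reasoning conflicts with the paper.
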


\begin{observation}\label{reduceVerif2}
Reducing the columns of $\Psi_P(Q)$ to a linearly independent set preserves the linear dependencies amongst the rows of $\Psi_P(Q)$. Similarly, reducing the rows of $\ns(Q)[E,:]$ to a linearly independent set preserves the linear dependencies amongst the columns of $\ns(Q)[E,:]$. 
\end{observation}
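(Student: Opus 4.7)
The plan is to reduce both statements to a standard linear algebra fact: if a subset of the columns of a matrix $M$ forms a basis of the column space of $M$, then the left null space of $M$ coincides with that of the submatrix, and the linear dependencies among rows are exactly the left null space. The second statement then follows from the first by transposing.

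Concretely, for the first statement, let $M = \Psi_P(Q)$ and let $M'$ denote the submatrix obtained by extracting a maximal linearly independent subset of columns of $M$. A linear dependence among the rows of a matrix $N$ is a vector $\bv$ with $\bv^T N = \bzero^T$, i.e., an element of the left null space of $N$. So the first statement amounts to the equality of the left null spaces of $M$ and $M'$. First I would note that since the columns of $M'$ are columns of $M$, any $\bv$ with $\bv^T M = \bzero^T$ automatically satisfies $\bv^T M' = \bzero^T$. In the other direction, since $M'$ spans the column space of $M$, every column of $M$ is a linear combination of columns of $M'$; hence there is a matrix $C$ with $M = M'C$, and $\bv^T M' = \bzero^T$ forces $\bv^T M = \bv^T M' C = \bzero^T$. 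Thus the two left null spaces agree and the row dependencies are preserved.

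For the second statement, apply the same reasoning to the transpose of $\ns(Q)[E,:]$: reducing the rows of $\ns(Q)[E,:]$ to a maximal linearly independent subset is the same as reducing the columns of $\ns(Q)[E,:]^T$ to such a subset. The linear dependencies among the columns of a matrix $N$ are the elements of its right null space, which equal the elements of the left null space of $N^T$. By the argument above applied to $N^T$, these are preserved, which gives the second claim.

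The main obstacle, such as it is, is purely a matter of setting up notation cleanly: being explicit that ``reducing to a linearly independent set'' means passing to a subset that still spans the ambient column (or row) space, since otherwise one loses information and the dependencies can change. Once this is pinned down, the argument reduces to the one-line observation that columns of $M'$ span the same space as columns of $M$, from which the equality of left null spaces is immediate. No manifold-theoretic input is needed; the observation is a bookkeeping fact that justifies simplifying the verification matrices in practice without altering the conclusions of Lemma~\ref{MLL}.
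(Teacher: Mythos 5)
The paper states this as an unproved Observation, so there is no author proof to compare against; your argument is correct and is the standard one. You correctly identify the one potential ambiguity — that ``reducing to a linearly independent set'' must mean passing to a maximal linearly independent subset (a basis of the column space), since otherwise the left null space can grow — and then the factorization $M = M'C$ gives equality of left null spaces, with the second claim following by transposition.
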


Theorem~\ref{TanVerif} is a restatement of Theorem~\ref{SIPPTran} in terms of the verification matrices. This formulation can be easily implemented with computer software to verify if a given matrix has the SIPP \cite{CURTIS2019}.

\begin{theorem}\label{TanVerif}
Let $Q\in\Orth(m,n)$ have sign pattern $S$ and $P\in\Orth(n)$ satisfy $QP^T=\left[\begin{array}{@{}c|c@{}}I & O\end{array}\right]$. 
\begin{enumerate}
\item[\rm (i)]
If the rows of $\Psi_P(Q)$ are linearly independent, then every super pattern of $S$ allows orthogonality.
\item[\rm (ii)]
If the columns of $\Omega(Q)$ are linearly independent, then every super pattern of $S$ allows orthogonality.
\end{enumerate}
\end{theorem}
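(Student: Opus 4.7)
Proof proposal for Theorem~\ref{TanVerif}:

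The plan is to reduce both parts directly to Theorem~\ref{SIPPTran} by showing that each of the stated linear independence conditions is equivalent to transversality of the manifolds $\Orth(m,n)$ and $\Q(S)$ at $Q$, which in turn is equivalent to $Q$ having the SIPP. Once transversality is established, Theorem~\ref{SIPPTran} immediately yields that every super pattern of $S$ allows orthogonality, so all the work is in recasting the two rank conditions as the two equivalent formulations of transversality appearing earlier in Section~\ref{develop}.

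For (i) I would first unpack what $\Psi_P(Q)$ encodes. By construction its columns are the vectors $\ve(B_{k\ell})$ of a basis of $\T_{\Orth(m,n)\cdot Q}$ (by Lemma~\ref{TOmn}), restricted to the rows indexed by $Z=\{(i,j):q_{ij}=0\}$. The linear map
\[
\Phi:\T_{\Orth(m,n)\cdot Q}\longrightarrow \R^{Z},\qquad B\longmapsto (B_{ij})_{(i,j)\in Z},
\]
has $\Psi_P(Q)$ as its matrix in the chosen bases. Hence the rows of $\Psi_P(Q)$ are linearly independent if and only if $\Phi$ is surjective, i.e.\ if and only if the projection of $\T_{\Orth(m,n)\cdot Q}$ onto $\spn{E_{ij}:(i,j)\in Z}$ is surjective. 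Since by Lemma~\ref{Qs} $\T_{\Q(S)\cdot Q}=\spn{E_{ij}:q_{ij}\neq 0}$ is the complementary coordinate subspace, this surjectivity is exactly the tangent-space transversality condition $\T_{\Orth(m,n)\cdot Q}+\T_{\Q(S)\cdot Q}=\R^{mn}$. By Theorem~\ref{SIPPTran}, $Q$ then has the SIPP, and every super pattern of $S$ allows orthogonality.

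For (ii) the argument is the mirror image using the normal-space criterion for transversality. The columns of $\Omega(Q)$ are the restrictions to the rows indexed by $E=\{(i,j):q_{ij}\neq 0\}$ of the vectors $\ve(C_{k\ell})$, which by Lemma~\ref{NOmn} form a basis of $\norm_{\Orth(m,n)\cdot Q}$. Linear independence of these columns says that any matrix $A\in\norm_{\Orth(m,n)\cdot Q}$ whose entries vanish on $E$ must be $O$. But a matrix whose entries vanish on $E$ is supported in $Z$, which is precisely the condition $A\circ Q=O$, i.e.\ $A\in\norm_{\Q(S)\cdot Q}$ by Lemma~\ref{Qs}. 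Hence the column independence is exactly $\norm_{\Orth(m,n)\cdot Q}\cap\norm_{\Q(S)\cdot Q}=\{O\}$, which is the second characterization of transversality. Applying Theorem~\ref{SIPPTran} once more finishes the proof.

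I do not anticipate a real obstacle here: the whole content of the theorem is bookkeeping that identifies the verification matrices with concrete representatives of the two transversality conditions. The only mild care needed is to track that ``rows of $\Psi_P(Q)$ linearly independent'' (with $|Z|$ rows in a space of dimension $mn-\binom{m+1}{2}$) corresponds to the appropriate notion of surjectivity and that the indices used for $\Omega(Q)$ correctly select the support $E$ of $Q$; once these identifications are made, the result is essentially a translation of Theorem~\ref{SIPPTran}.
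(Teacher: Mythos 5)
Your proof is correct and clean. For part (ii) it is essentially identical to the paper's argument: show that linear independence of the columns of $\Omega(Q)$ is the same as $\norm_{\Orth(m,n)\cdot Q}\cap\norm_{\Q(S)\cdot Q}=\{O\}$ and then invoke Theorem~\ref{SIPPTran}. For part (i), however, you take a genuinely shorter route than the paper. The paper's proof of (i) does \emph{not} directly establish transversality; instead it observes that if the rows of $\Psi_P(Q)$ are linearly independent then the columns span $\R^{N_0}$, so for \emph{every} super pattern $S'$ of $S$ one can find some $B\in\T_{\Orth(m,n)\cdot Q}$ with $S'=S_{\vec{R}}$, and then applies the Matrix Liberation Lemma~\ref{MLL} (condition (i) of which is automatically met since all rows of $\Psi_P(Q)$ are independent). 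You bypass the MLL entirely by recognizing that row-independence of $\Psi_P(Q)$ is exactly surjectivity of the coordinate projection $\T_{\Orth(m,n)\cdot Q}\to\R^{Z}$, which is the tangent-space form of the transversality condition $\T_{\Orth(m,n)\cdot Q}+\T_{\Q(S)\cdot Q}=\R^{mn}$, and then use Theorem~\ref{SIPPTran} directly. What your route buys is unity and transparency: both (i) and (ii) are recast as the two dual formulations of one and the same transversality statement, making it explicit that the two hypotheses in the theorem are in fact equivalent to each other (and to $Q$ having the SIPP), a fact the paper never states. What the paper's route buys is that Lemma~\ref{MLL} is the workhorse used elsewhere (e.g.\ Theorem~\ref{AlgVerif}, Example~\ref{barrier}), so deriving Theorem~\ref{TanVerif}(i) from it keeps the logical dependencies compact and illustrates the lemma in its simplest setting. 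Both proofs are valid; your identification of the rank conditions with the two transversality criteria is exactly the right bookkeeping and has no gaps.
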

\begin{proof}
Begin by assuming that the rows of $\Psi(Q)$ are linearly independent. Let $N_0$ denote the number of zero entries in $Q$. Then the columns of $\Psi(Q)$ span $\R^{N_0}$. Thus, for every super pattern $S^\prime$ of $S$ there exists some $B\in\T_{\Orth(m,n)\cdot Q}$ with sign pattern $R$ such that $S^\prime = S_{\vec{R}}$. Further, by Lemma~\ref{MLL} we know $S_{\vec{R}}$ allows orthogonality. Thus, (i) holds.

Now assume that the columns of $\Omega(Q)$ are linearly independent. Let $q_{ij}$ denote the $(i,j)$ entry of $Q$ and $A\in\norm_{\Q(S)\cdot Q}\cap \norm_{\Orth(m,n)\cdot Q}$. By Theorem~\ref{SIPPTran} it suffices to show that $A = O$. Observe that $\norm_{\Q(S)\cdot Q}$ consists of matrices whose $(i,j)$-entry is 0 whenever $q_{ij}\not=0$. Thus, the $(i,j)$-entry $A$ is 0 whenever $q_{ij}\not=0$. We must show the remaining entries of $A$ are also 0.

Suppose the columns of $\Omega(Q)$ are linearly independent. Let $E = \{(i,j):q_{ij} \not= 0\}$. Then the only solution to
\[
\sum_{i < j}x_{ij} \ve_E(B_{ij}) = 0
\]
is the trivial solution. Since the vectors $B_{ij}$ form a basis for $\norm_{\Orth(m,n)\cdot Q}$, $A = O$. Thus, (ii) holds.
\end{proof}

The following example illustrates how to use the verification matrices and shows that an $n\times n$ orthogonal matrix can have as few as eight zero entries and still not have the SIPP.

\begin{example}\label{barrier}
Let $\bu,\bv\in\R^n$ be nowhere zero. Suppose $\bu^T\bu=1$, $\bv^T\bv=1$, $\bu^T\bv=0$ and that $I-\bu\bu^T-\bv\bv^T$ is nowhere zero. Let $\omega = 1/\sqrt{2}$ and
\[
Q =
\left[\begin{array}{rrrr|c}
-1/2 & 1/2 & 0 & 0 & \omega \bu^T\\
1/2 & -1/2 & 0 & 0 & \omega \bu^T\\
0 & 0 & -1/2 & 1/2 & \omega \bv^T\\
0 & 0 & 1/2 & -1/2 & \omega \bv^T\\
\hline
\bigstrut \omega \bu & \omega \bu & \omega \bv & \omega \bv & I-\bu\bu^T-\bv\bv^T
\end{array}\right].
\]

It is routine to check that $Q\in\Orth(n+4)$. By Observations~\ref{reduceVerif} and~\ref{reduceVerif2} we may represent the verification matrix $\Psi(Q)$ as

\[
\begin{blockarray}{rrrrrrrrr}
 & B_{1,3} & B_{1,4} & B_{2,3} & B_{2,4} & B_{1,5} & B_{2,5} & B_{3,5} & B_{4,5} \\
\begin{block}{c[rrrrrrrr]}
(3,1) & 1 & 0 & -1 & 0 & 0 & 0 & 1 & 0 \topstrut\\
(4,1) & 0 & 1 & 0 & -1 & 0 & 0 & 0 & 1 \\
(3,2) & -1 & 0 & 1 & 0 & 0 & 0 & 1 & 0 \\
(4,2) & 0 & -1 & 0 & 1 & 0 & 0 & 0 & 1 \\
(1,3) & -1 & 1 & 0 & 0 & 1 & 0 & 0 & 0 \\
(2,3) & 0 & 0 & -1 & 1 & 0 & 1 & 0 & 0 \\
(1,4) & 1 & -1 & 0 & 0 & 1 & 0 & 0 & 0 \\
(2,4) & 0 & 0 & 1 & -1 & 0 & 1 & 0 & 0 \botstrut\\
\end{block}
\end{blockarray}.
\]

A subset of the rows of $\Psi(Q)$ are linearly dependent if and only if there exists a vector in the left nullspace of $\Psi(Q)$ whose support corresponds to the rows. The left nullspace of $\Psi(Q)$ is spanned by
\[
\bx^T =
\left[\begin{array}{rrrrrrrr}
1& -1& -1& 1& 1& -1& -1& 1
\end{array}\right].
\]
Thus, by Theorem~\ref{TanVerif}, $Q$ does not have the SIPP. 

Let $S$ be the sign pattern of $Q$. Since the rows of $\Psi(Q)$ form a minimal linearly dependent set, we may use any matrix $B\in\T_{\Orth(m,n)\cdot Q}$ when applying Lemma~\ref{MLL}. Observe that the column space of $\Psi(Q)$ is the orthogonal complement of the span of $\bx$. Further, the sign patterns of vectors which are orthogonal to $\bx$ are precisely those which are potentially orthogonal to $\bx$. Therefore, every super pattern of $S$ allows orthogonality except those of the form $S_{\vec{R}}$, where $R$ is nonzero and of the form
\[
{\renewcommand*{\arraystretch}{1.4}
R =
\left[\begin{array}{c|c}
\hat{R} & O\\
\hline
O & O
\end{array}\right]}
\text{ with }
\hat{R} = 
\left[\begin{array}{rrrr}
0 & 0 & \alpha_3 & \beta_4 \\
0 & 0 & \beta_3 & \alpha_4 \\
\alpha_1 & \beta_2 & 0 & 0 \\
\beta_1 & \alpha_2 & 0 & 0 
\end{array}\right],
\]
and each $\alpha_i=\pm 1, 0$ and each $\beta_i=\mp1, 0$.
\end{example}

We now apply the Matrix Liberation Lemma to obtain the following result.

\begin{theorem}\label{AlgVerif}
Let $Q\in\Orth(m,n)$ have sign pattern $S$ and $B\in\T_{\Orth(m,n)\cdot Q}$ have sign pattern $R$. If $X = O$ is the only $m\times m$ symmetric matrix satisfying $(XQ)\circ S_{\vec{R}} = O$, then every super pattern of $S_{\vec{R}}$ allows orthogonality.
\end{theorem}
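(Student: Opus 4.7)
The plan is to reduce the hypothesis to condition (ii) of the Matrix Liberation Lemma (Lemma~\ref{MLL}) and then apply it directly. Let $E = \{(i,j) : s_{ij} \neq 0 \text{ or } r_{ij} \neq 0\}$; because $R = \sgn(B)$, this coincides with the set $E$ appearing in Lemma~\ref{MLL}, and it is exactly the support of $S_{\vec{R}}$. So the conclusion of our theorem would follow if I can establish that the columns of $\ns(Q)[E,:]$ are linearly independent.

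First I would recall the basis for the normal space. By Lemma~\ref{NOmn}, $\norm_{\Orth(m,n)\cdot Q} = \{XQ : X \in \sym m\}$, and the matrices $C_{ij} = (E_{ij} + E_{ji})Q$ for $i < j$ together with $C_{ii} = E_{ii}Q$ form a basis whose vectorizations make up the columns of $\ns(Q)$. A linear combination $\sum_{i \leq j} x_{ij} \ve(C_{ij})$ equals $\ve(XQ)$, where $X$ is the symmetric matrix with entries $X_{ij} = X_{ji} = x_{ij}$ for $i<j$ and $X_{ii} = x_{ii}$. Restricting to the rows indexed by $E$, the vector $\sum_{i \leq j} x_{ij} \ve_E(C_{ij})$ equals $\ve_E(XQ)$.

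Next I would translate a potential dependence into the hypothesis. Suppose $\sum_{i \leq j} x_{ij} \ve_E(C_{ij}) = \bzero$. Then $\ve_E(XQ) = \bzero$, meaning every entry of $XQ$ indexed by $E$ is zero. Because $E$ is precisely the support of $S_{\vec{R}}$, this is equivalent to the Hadamard-product equation $(XQ)\circ S_{\vec{R}} = O$. The hypothesis of the theorem now forces $X = O$, so all $x_{ij} = 0$. Hence the columns of $\ns(Q)[E,:]$ are linearly independent, which is condition (ii) of Lemma~\ref{MLL}, and the lemma concludes that every super pattern of $S_{\vec{R}}$ allows orthogonality.

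There is no significant obstacle here; the proof is essentially a dictionary translation between the algebraic condition in the hypothesis and the column-independence condition in the Matrix Liberation Lemma. The one subtle point to be careful about is verifying that replacing $S_{\vec{R}}$ (a $(0,\pm 1)$-pattern) by its $(0,1)$-support in the Hadamard condition is harmless, because $(XQ)\circ S_{\vec{R}} = O$ is equivalent to $(XQ)$ vanishing on the support of $S_{\vec{R}}$. Once that identification is made, the result drops out of Lemma~\ref{MLL}(ii).
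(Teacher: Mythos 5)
Your proposal is correct and takes essentially the same approach as the paper's proof: both reduce the hypothesis to condition (ii) of Lemma~\ref{MLL} by translating a putative linear dependence among the columns of $\ns(Q)[E,:]$ into the Hadamard equation $(XQ)\circ S_{\vec{R}} = O$ for a symmetric $X$, and then invoking the assumption that only $X=O$ satisfies it. The only (immaterial) difference is that you use the basis matrices $C_{ii}=E_{ii}Q$ directly whereas the paper writes $A=\sum_{i\le j} a_{ij}(E_{ij}+E_{ji})$, which doubles the diagonal coefficients but changes nothing about linear independence.
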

\begin{proof}
Suppose that $X = O$ is the only $m\times m$ symmetric matrix satisfying $(XQ)\circ S_{\vec{R}} = O$. For $1\leq i \leq j \leq m$ define the $m\times m$ symmetric matrix
\[
A = \sum_{i\leq j} a_{ij}(E_{ij} + E_{ji}),
\]
where each $a_{ij}\in \R$. Let $E = \{(i,j): b_{ij} \not= 0 \text{ or } q_{ij} \not= 0\}$, where $b_{ij}$ and $q_{ij}$ are the $(i,j)$ entries of $B$ and $Q$ respectively. Then the columns of $NS(Q)[E,:]$ are linearly independent if and only if
\[
O = \left(\sum_{i\leq j}a_{ij}(E_{ij} + E_{ji})Q\right)\circ S_{\vec{R}} = (AQ)\circ S_{\vec{R}}
\]
implies $A = O$. This holds by assumption. Thus, by Lemma~\ref{MLL}, every super pattern of $S_{\vec{R}}$ allows orthogonality.
\end{proof}

Notice that the conditions in Theorem~\ref{AlgVerif} are very similar to the requirements of having the SIPP. Just as with the SIPP, we have a convenient result when dealing with square matrices.

\begin{corollary}\label{SquareVerif}
Let $Q\in\Orth(n)$ have sign pattern $S$ and $B\in\T_{\Orth(n)\cdot Q}$ have sign pattern $R$. If $Y = O$ is the only $m\times m$ matrix satisfying $YQ^T$ is symmetric and $Y\circ S_{\vec{R}} = O$, then every super pattern of $S_{\vec{R}}$ allows orthogonality.
\end{corollary}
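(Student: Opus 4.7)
The plan is to reduce Corollary~\ref{SquareVerif} to Theorem~\ref{AlgVerif} by the same substitution trick that relates Definition~\ref{DefSIPP} to Theorem~\ref{equivSIPP}. That is, since $Q$ is square and orthogonal, $Q^{-1} = Q^T$ exists, so multiplication by $Q$ gives a bijection between ``symmetric $X$'' with $(XQ) \circ S_{\vec{R}} = O$ and ``$Y$ with $YQ^T$ symmetric'' with $Y \circ S_{\vec{R}} = O$.

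More precisely, assume the hypothesis of the corollary: the only matrix $Y$ for which $YQ^T$ is symmetric and $Y \circ S_{\vec{R}} = O$ is $Y = O$. To apply Theorem~\ref{AlgVerif}, let $X \in \sym n$ be arbitrary satisfying $(XQ) \circ S_{\vec{R}} = O$. Set $Y = XQ$. Then $YQ^T = XQQ^T = X$ is symmetric, and $Y \circ S_{\vec{R}} = (XQ) \circ S_{\vec{R}} = O$. By the hypothesis, $Y = O$, i.e.\ $XQ = O$, and invertibility of $Q$ forces $X = O$. Thus the hypothesis of Theorem~\ref{AlgVerif} is met, and every super pattern of $S_{\vec{R}}$ allows orthogonality.

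There is no real obstacle here; the argument is a direct analogue of the ``invertible case'' simplification already carried out for the SIPP in Theorem~\ref{equivSIPP}. The only thing to be careful about is that we do not need the reverse implication (from the hypothesis of Theorem~\ref{AlgVerif} to that of Corollary~\ref{SquareVerif}), since we are only proving a sufficient condition for super patterns of $S_{\vec{R}}$ to allow orthogonality. A brief sentence noting that the dimension statement ``$m \times m$'' in the corollary should be read as ``$n \times n$'' (since $Q \in \Orth(n)$) may also be in order, but otherwise the proof is a two‑line substitution followed by an invocation of Theorem~\ref{AlgVerif}.
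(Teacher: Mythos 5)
Your proof is correct and is exactly the argument the paper leaves implicit: the corollary follows from Theorem~\ref{AlgVerif} by the same substitution ($Y = XQ$, using $QQ^T = I$) that reduces Definition~\ref{DefSIPP} to the invertible reformulation in Theorem~\ref{equivSIPP}. Your side remark that ``$m\times m$'' should read ``$n\times n$'' is also right; that is a typo in the statement.
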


An $(n+1)\times(n+1)$ orthogonal matrix can be obtained from $Q\in\Orth(n)$ with the direct sum $[1]\oplus Q$. While such matrices do not have the SIPP (see Lemma~\ref{basics}), they do have enough structure to produce the next result.

\begin{corollary}\label{addVertex}
Let $\bk\in\R^n$ and $Q\in\Orth(n)$ be nowhere zero. Define the sets
\[
D = \{\bx\in\R^n : (\bk^T Q)\circ \bx^T = \bzero\} \text{ and } F = \{Q^T\by \in\R^n: \bk\circ\by = \bzero\}
\]
If $D \cap F = \{\bzero\}$, then every super pattern of
\[
\renewcommand*{\arraystretch}{1.4}
\left[\begin{array}{c|c}
1 & \sgn(\bk^T Q)\\
\hline
-\sgn(\bk) & \sgn(Q)
\end{array}\right]
\]
allows orthogonality.
\end{corollary}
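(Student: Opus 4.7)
\medskip

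The plan is to apply Corollary~\ref{SquareVerif} to the block-diagonal matrix $Q' = [1]\oplus Q\in\Orth(n+1)$, using a suitably chosen tangent direction $B$. Since $Q$ is nowhere zero, the sign pattern of $Q'$ is $S = \begin{bmatrix} 1 & \bzero^T \\ \bzero & \sgn(Q) \end{bmatrix}$, which misses only the first row and first column of the target pattern. The natural guess is to perturb $Q'$ in a direction $B$ that fills in precisely the missing row and column.

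First I would construct $B$. By Lemma~\ref{TOmn}, $B\in\T_{\Orth(n+1)\cdot Q'}$ iff $B(Q')^T\in\skw{n+1}$, so the skew matrix $\begin{bmatrix} 0 & \bk^T \\ -\bk & O \end{bmatrix}$ yields the tangent
\[
B \;=\; \begin{bmatrix} 0 & \bk^T \\ -\bk & O \end{bmatrix} Q' \;=\; \begin{bmatrix} 0 & \bk^T Q \\ -\bk & O \end{bmatrix},
\]
whose sign pattern $R$ produces super pattern $S_{\vec R}$ equal to the matrix in the statement (using that $\sgn(Q)$ is preserved off the first row/column because $Q$ is nowhere zero).

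Next I would verify the hypothesis of Corollary~\ref{SquareVerif}: that $Y = O$ is the only $(n+1)\times(n+1)$ matrix with $Y(Q')^T$ symmetric and $Y\circ S_{\vec R} = O$. Write $Y = \begin{bmatrix} y_{11} & \by_1^T \\ \by_2 & Y_0 \end{bmatrix}$. The condition $Y\circ S_{\vec R}=O$ forces $y_{11}=0$, $Y_0 = O$ (since $\sgn(Q)$ is nowhere zero), and the supports of $\by_1$ and $\by_2$ avoid those of $\bk^T Q$ and $\bk$ respectively; the first translates directly into $\by_1 \in D$, and the second into $\bk\circ\by_2 = \bzero$. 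The symmetry condition reduces to $Y(Q')^T = \begin{bmatrix} 0 & (Q\by_1)^T \\ \by_2 & O \end{bmatrix}$, forcing $\by_2 = Q\by_1$, equivalently $\by_1 = Q^T\by_2 \in F$. Hence $\by_1 \in D\cap F = \{\bzero\}$, which immediately gives $\by_2 = \bzero$ and so $Y = O$.

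There is no serious obstacle: the entire argument amounts to recognizing that the hypotheses defining $D$ and $F$ are exactly the algebraic conditions Corollary~\ref{SquareVerif} demands for this particular $Q'$ and $B$. The only point requiring care is bookkeeping the block structure of $Y(Q')^T$ and checking that the sign-pattern constraints correspond to membership in $D$ and $F$, which is straightforward. Once $Y = O$ is established, Corollary~\ref{SquareVerif} delivers the desired conclusion about every super pattern of $S_{\vec R}$.
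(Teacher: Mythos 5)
Your proposal is correct and takes essentially the same approach as the paper: it constructs the same tangent direction $B = K\hat Q$ with $K = \begin{bmatrix} 0 & \bk^T \\ -\bk & O \end{bmatrix}$, applies Corollary~\ref{SquareVerif} to $[1]\oplus Q$, and translates the symmetry and support conditions on the test matrix into membership of a single vector in $D\cap F$. The only difference from the paper's argument is cosmetic (variable names and the order in which the two constraints are identified with $D$ and $F$).
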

\begin{proof}
Assume $D\cap F = \{\bzero\}$. Define $\hat{Q} = [1] \oplus Q$ and $K\in\skw{n+1}$ by
\[
\renewcommand*{\arraystretch}{1.4}
K =
\left[\begin{array}{c|c}
0 & \bk^T\\
\hline
-\bk & O
\end{array}\right].
\]
Then 
\[
\renewcommand*{\arraystretch}{1.4}
K\hat{Q} =
\left[\begin{array}{c|c}
0 & \bk^T Q\\
\hline
-\bk & O
\end{array}\right]
\]
is in $\T_{\Orth(n+1)\cdot\hat{Q}}$. Let $S = \sgn(\hat{Q})$, $R = \sgn(K\hat{Q})$ and $X\in\R^{m\times m}$. Assume that $X\hat{Q}^T$ is symmetric and $X\circ S_{\vec{R}} = O$. Observe that
\[
\renewcommand*{\arraystretch}{1.4}
S_{\vec{R}} = 
\left[\begin{array}{c|c}
1 & \sgn(\bk^T Q)\\
\hline
-\sgn(\bk) & \sgn(Q)
\end{array}\right].
\]
Since $X\circ S_{\vec{R}} = O$ we know $X$ has the form
\[
\renewcommand*{\arraystretch}{1.4}
X = 
\left[\begin{array}{c|c}
0 & \bx^T\\
\hline
\by &O
\end{array}\right],
\]
where $\bx,\by\in\R^n$. Then
\[
\renewcommand*{\arraystretch}{1.4}
X\hat{Q}^T =
\left[\begin{array}{c|c}
0 & \bx^T Q^T\\
\hline
\by & O
\end{array}\right],
\]
and since $X\hat{Q}^T$ is symmetric, $Q\bx = \by$. Thus, $X\circ S_{\vec{R}} = O$ is equivalent to 
\[ (\bk^TQ)\circ\bx^T = \bzero \text{ and } \bk \circ (Q\bx) = \bzero.
\]
Notice that
\[
F = \{\bx \in\R^n: \bk\circ (Q\bx) = \bzero\}.
\]
Since $D\cap F = \{\bzero\}$, $\bx = \bzero$ and so $X = O$. By Corollary~\ref{SquareVerif}, every super pattern of $S_{\vec{R}}$ allows orthogonality.
\end{proof}

The next example illustrates how to apply Corollary~\ref{addVertex}.

\begin{example}
Consider the orthogonal matrix
\[
\hat{Q} = \frac13
\left[\begin{array}{rrrr}
3 & 0 & 0 & 0\\
0 & 1 & 2 & 2\\
0 & 2 & 1 & -2\\
0 & 2 & -2 & 1
\end{array}\right]
\]
and the vector
\[
\bk =
\left[\begin{array}{c}
0\\ 2\\ 1
\end{array}\right].
\]
Let $Q = \hat{Q}[\alpha,\alpha]$, where $\alpha = \{2,3,4\}$. Observe that
\[
\bk^TQ =
\frac13\left[\begin{array}{rrr} 2 & 0 & -1\end{array}\right].
\]
Then
\[
D = \{\bx : (\bk^T Q)\circ \bx^T = \bzero\} = \left\{\left[\begin{array}{c} 0\\ x_1\\ 0\end{array}\right]: x_1\in\R\right\}
\]
and
\[
F = \{Q^T\by : \bk\circ\by = \bzero\} = \left\{\left[\begin{array}{c} y_1\\ 2y_1\\ 2y_1\end{array}\right]: y_1\in\R\right\}.
\]
Thus, $D\cap F = \{\bzero\}$ and by Corollary~\ref{addVertex} the sign pattern
\[
\left[\begin{array}{rrrr}
1 & 1 & 0 & -1\\
0 & 1 & 1 & 1\\
-1 & 1 & 1 & -1\\
-1 & 1 & -1 & 1
\end{array}\right]
\]
allows orthogonality.
\end{example}

In \cite{WATERS1996} it was asked for which $n\times n$ sign patterns $S$ is the determinant function constant on $\Orth(n)\cap\Q(S) = \{Q\in\Orth(n): \sgn(Q) =S\}$? It was shown that for any sign pattern $S$ with order $n\leq 4$ the determinant is constant on $\Orth(n)\cap\Q(S)$. The next example shows that for each $n\geq 7$ there exist orthogonal matrices with the same sign pattern and oppositely signed determinants.

\begin{example}\label{biplane}
The $7\times 7$ sign pattern
\[
S=
\left[\begin{array}{rrrrrrr}
1 & 0 & 1 & 0 & -1 & 0 & 1 \\
-1 & 1 & 0 & 1 & 0 & 0 & 1 \\
0 & 1 & 1 & 0 & 0 & 1 & -1 \\
1 & 0 & -1 & 1 & 0 & 1 & 0 \\
0 & -1 & 1 & 1 & 1 & 0 & 0 \\
-1 & -1 & 0 & 0 & -1 & 1 & 0 \\
0 & 0 & 0 & -1 & 1 & 1 & 1 \\
\end{array}\right]
\]
allows orthogonality since $Q = \frac12S$ is orthogonal. Let
\[
X=
\left[\begin{array}{ccccccc}
0 & x_1 & 0 & x_2 & 0 & x_3 & 0 \\
0 & 0 & x_4 & 0 & x_5 & x_6 & 0 \\
x _7 & 0 & 0 & x_8 & x_9 & 0 & 0 \\
0 & x_{10} & 0 & 0 & x_{11} & 0 & x_{12} \\
x_{13} &0 & 0 & 0 & 0 & x_{14} & x_{15} \\
0 & 0 & x_{16} & x_{17} & 0 & 0 & x_{18} \\
x_{19} & x_{20} & x_{21} & 0 & 0 & 0 & 0 \\
\end{array}\right],
\]
where each $x_i\in\R$. Observe that $X\circ Q = O$. By assuming $XQ^T$ is symmetric we obtain a homogeneous system of 21 linear equations in $x_1,\ldots,x_{21}$ whose coefficient matrix can be shown to have nonzero determinant. Thus, each $x_k = 0$ and by Theorem~\ref{equivSIPP} the matrix $Q$ has the SIPP. This can be verified by applying Theorem~\ref{TanVerif} (see \cite{CURTIS2019}).

Observe that $I - K$ is a super pattern of $S$, where $K$ is the skew symmetric matrix
\[
K = 
\left[\begin{array}{rrrrrrr}
0 & -1 & -1 & 1 & 1 & -1 & -1 \\
1 & 0 & 1 & -1 & -1 & -1 & -1 \\
1 & -1 & 0 & -1 & 1 & -1 & 1 \\
-1 & 1 & 1 & 0 & 1 & -1 & -1 \\
-1 & 1 & -1 & -1 & 0 & -1 & -1 \\
1 & 1 & 1 & 1 & 1 & 0 & -1 \\
1 & 1 & -1 & 1 & 1 & 1 & 0 \\
\end{array}\right].
\]

Since the determinant function is continuous and orthogonal matrices have determinant $\pm1$, any orthogonal matrix near $Q$ will have the same determinant as $Q$. In particular, any matrix with sign pattern $I-K$ obtained by applying Theorem~\ref{TanVerif} to $Q$ will have determinant $\det(Q) = -1$.

Using the Cayley transform, every matrix $P = (I - \epsilon K)(I + \epsilon K)^{-1}$, where $\epsilon > 0$, is orthogonal. Notice that for $\epsilon < 1$ 
\[
(I + \epsilon K)^{-1} = I + \sum_{i=1}^\infty (- \epsilon K)^i
\]
and $I - \epsilon K$ is nowhere zero. Thus, if we choose $\epsilon$ small enough $P$ will have sign pattern $I - K$. Since $P$ was obtained using the Cayley transform $\det(P) = 1$. Thus, $P$ and $Q$ both have sign pattern $I - K$ and $\det(P)\not=\det(Q)$.

Further, by using Corollary~\ref{addVertex} and induction we can guarantee the existence of irreducible orthogonal matrices of order $n\geq 7$ with the same sign pattern and oppositely signed determinant.
\end{example}

Let $Q = \bigoplus_{i=1}^k Q_i$, where each $Q_i\in\Orth(n_i)$. In \cite{WATERS1996} the implicit function theorem was used to find super patterns of $\sgn(Q)$ that allow orthogonality. The main result of \cite{WATERS1996}, Theorem 3.14, can be phrased in terms of the SIPP. In fact, using Theorem~\ref{AlgVerif} we obtain the following stronger result. Unlike Theorem 3.14 in \cite{WATERS1996}, Corollary~\ref{WatersResult} can produce sign patterns of orthogonal matrices by perturbing the zero entries in the blocks $Q_i$ of $Q$.

\begin{corollary}\label{WatersResult}
Let $Q = \bigoplus_{i=1}^k Q_i$ have sign pattern $S$ and let each $Q_i\in\Orth(n_i)$ where $\sum_{i=1}^k n_i = n$. Let $B\in\R^{n\times n}$ be a block matrix with $(i,j)$-block denoted as $B_{ij}\in\R^{n_i\times n_j}$ and $R = \sgn(B)$. If each $B_{ij} = -Q_i B_{ji}^TQ_j$, and $X=O$ is the only symmetric matrix satisfying $(XQ)\circ S_{\vec{R}} = O$, then $S_{\vec{R}}$ allows orthogonality.
\end{corollary}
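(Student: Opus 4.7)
The plan is to apply Theorem~\ref{AlgVerif} to the given $Q$ and $B$. The hypothesis that $X=O$ is the only symmetric matrix satisfying $(XQ)\circ S_{\vec{R}}=O$ is assumed outright, so the only remaining thing to verify before invoking Theorem~\ref{AlgVerif} is that $B$ lies in $\T_{\Orth(n)\cdot Q}$.

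First I would unpack the tangent space condition using Lemma~\ref{TOmn}: $B\in\T_{\Orth(n)\cdot Q}$ if and only if $BQ^T\in\skw{n}$. Since $Q=\bigoplus_{i=1}^k Q_i$ is block diagonal, the $(i,j)$-block of $BQ^T$ is simply $B_{ij}Q_j^T$, and skew-symmetry of $BQ^T$ amounts to the blockwise identity $B_{ij}Q_j^T=-(B_{ji}Q_i^T)^T=-Q_iB_{ji}^T$ for every pair $(i,j)$. Right-multiplying by $Q_j$ and using $Q_j^TQ_j=I$ (which holds because each $Q_j\in\Orth(n_j)$) rewrites this as $B_{ij}=-Q_iB_{ji}^TQ_j$, which is exactly the hypothesis imposed on the blocks of $B$.

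With $B\in\T_{\Orth(n)\cdot Q}$ and $R=\sgn(B)$ established, Theorem~\ref{AlgVerif} applies verbatim and gives that every super pattern of $S_{\vec{R}}$ allows orthogonality; since $S_{\vec{R}}$ is a super pattern of itself, the corollary follows. There is no genuine obstacle here: the corollary is essentially a repackaging of Theorem~\ref{AlgVerif} in which the block-diagonal form of $Q$ is exploited to convert the single global condition $BQ^T\in\skw n$ into an elementwise relation between $B_{ij}$ and $B_{ji}^T$ conjugated by the diagonal blocks $Q_i,Q_j$. The only detail that requires care is bookkeeping which side the $Q_j$ or $Q_j^T$ is multiplied on when moving between $BQ^T\in\skw n$ and the stated blockwise identity.
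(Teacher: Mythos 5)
Your proof is correct and takes essentially the same route as the paper: both reduce the problem to checking that the block condition $B_{ij}=-Q_iB_{ji}^TQ_j$ is equivalent to $BQ^T\in\skw n$ (the paper phrases this by explicitly naming $K=BQ^T$ and checking $K_{ij}=-K_{ji}^T$), and then invoke Theorem~\ref{AlgVerif}. Your bookkeeping of the blockwise skew-symmetry condition matches the paper's.
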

\begin{proof}
Suppose that each $B_{ij} = -Q_i B_{ji}^TQ_j$. Then $B_{ij}Q_{j}^T = -Q_{i}B_{ji}^T$. Define $K_{ij} = B_{ij}Q_{j}^T$ so that $K_{ij} = -K_{ji}^T$. It follows that $K = [K_{ij}]$ is skew symmetric and that $B = KQ$. Thus, $B\in \T_{\Orth(n)\cdot Q}$. The claim now follows from Theorem~\ref{AlgVerif}.
\end{proof}

\bibliographystyle{elsarticle-num}
\bibliography{refs}

\end{document}